\newtheorem{theorem}{Theorem}[section]
\newtheorem{lemma}[theorem]{Lemma}
\newtheorem{definition}[theorem]{Definition}
\newtheorem{proposition}[theorem]{Proposition}
\newtheorem{corollary}[theorem]{Corollary}
\theoremstyle{definition}
\newtheorem{remark}{Remark}[section]
\newtheorem*{claim}{Claim}
\newtheorem{example}{Example}[section]
\numberwithin{equation}{section}
\numberwithin{figure}{section}
\begin{document}

\begin{frontmatter}
\title{Fractional Fourier transforms on $L^p$  and applications\tnoteref{tn}}
\tnotetext[tn]{The second and fourth authors were partially supported by the National Natural Science Foundation of China (Nos. 11671185, 11701251 and 11771195) and the Natural Science Foundation of Shandong Province (Nos. ZR2017BA015, ZR2018LA002 and ZR2019YQ04). The third author was partially supported by the Simons Foundation (No. 315380).}

\author[swu]{Wei Chen}
\ead{cwei1990@126.com}
\author[lyu]{Zunwei Fu}
\ead{fuzunwei@eoyu.com}
\author[mu]{Loukas Grafakos}
\ead{grafakosl@missouri.edu}
\author[lyu]{Yue Wu}
\ead{wuyue@lyu.edu.cn}
\address[swu]{College of Information Technology, The University of Suwon, Hwaseong-si 18323, South Korea}
\address[lyu]{School of Mathematics and Statistics, Linyi University, Linyi 276000, China}
\address[mu]{Department of Mathematics, University of Missouri, Columbia MO 65211, USA}

\begin{abstract}
This paper is devoted to the $L^p(\mathbb R)$ theory of the fractional Fourier transform (FRFT) for $1\le p < 2$. In view of the special structure of  the  FRFT,  we study FRFT properties of    $L^1$ functions, via the introduction of a suitable chirp operator. However, in the  $L^1(\mathbb{R})$ setting, problems of convergence arise  even when basic    manipulations of functions are performed. We overcome such issues and study the FRFT inversion problem via approximation by suitable means, such as the fractional Gauss and Abel means. We also obtain the regularity of   fractional convolution and   results on pointwise convergence of FRFT means.  Finally we discuss  $L^p$ multiplier results and a Littlewood-Paley theorem associated with FRFT.
\end{abstract}
\begin{keyword}
Fractional Fourier transform \sep fractional approximate identities \sep $L^p$ multipliers \sep Littlewood-Paley theorem
\end{keyword}
\end{frontmatter}

\begin{footnotesize}
\begin{spacing}{-1.0}
\vspace{-20pt}
  \tableofcontents
  \end{spacing}
\end{footnotesize}

\section{Introduction}

\label{sect:intro}

In classical Fourier analysis three  important classes of operators arise:
maximal averages, singular integrals, and oscillatory
integrals. The Hardy-Littlewood maximal operator, the Hilbert transform and
the Fourier transform, respectively, are prime examples of these classes
of operators. In recent decades
fractional versions of the first two types of operators  have been
widely studied, but less attention has been paid to the
mathematical theory of the fractional Fourier transform. In this paper we undertake this task,
which is strongly motivated by the important role it plays in practical applications.

The Fourier transform is one of the most important and powerful tools in
theoretical and applied mathematics. Mainly driven
by the need to analyze and
process non-stationary signals, the Fourier transform of fractional
order has been proposed and developed by several scholars.
At present, the fractional Fourier transform
(FRFT for short) has found applications in many aspects of scientific research and
engineering technology, such as swept filter, artificial neural network,
wavelet transform, time-frequency analysis, time-varying filtering, complex
transmission and so on (see, e.g., \cite{S2011,T2010,D2001,N2003,M2002,Y2003}%
). In addition, it was also used widely in fields of solving partial
differential equations (cf., \cite{namias,Lohmann93}), quantum mechanics (cf.,
\cite{namias,PhysRevLett72}), diffraction theory and optical transmission
(cf., \cite{Ozak}), optical system and optical signal processing (cf.,
\cite{Bernardo94,OZAKTAS1995119,Liu97}), optical image processing (cf.,
\cite{Lohmann93,Liu97}), etc.

The FRFT is a fairly old mathematical topic. It dates back to work by Wiener
\cite{W1929} in 1929, but it was not until the
past three decades that significant attention was paid to this object
starting with Namias' work \cite{namias}
in 1980. The approach used by Namias relies primarily on eigenfunction
expansions. For suitable functions $f$ on the line, the classical Fourier transform
$\mathcal{F}$ is defined as follows%
\begin{equation}
(\mathcal{F}f)(x)=\int_{-\infty}^{+\infty}f(t)e^{-2\pi ixt}\mathrm{d}t.
\label{ft}%
\end{equation}
It is known that $\mathcal{F}$ is a homeomorphism on $L^{2}(\mathbb{R})$ and
has eigenvalues
\[
\lambda_{n}=e^{in\pi/2},\quad n=0,1,2,\ldots.
\]
with corresponding eigenfunctions
\[
\psi_{n}(x)=e^{-x^{2}/2}H_{n}(x),
\]
where $H_{n}$ is the Hermite polynomial of degree $n$ (see \cite{GL}). Since
$\{\psi_{n}\}$ is an orthonrmal basis of $L^{2}(\mathbb{R})$, it follows that
\[
\mathcal{F}f=\sum_{n}e^{-in\pi/2}(f,\psi_{n})_{2}\psi_{n},\quad\forall f\in
L^{2}(\mathbb{R}).
\]
This naturally leads to the definition of the fractional order operators $\{\mathcal{F}_{\alpha}\}$
for $\alpha\in\mathbb{R}$ via%
\[
\mathcal{F}_{\alpha}f=\sum_{n}e^{-in\alpha}(f,\psi_{n})_{2}\psi_{n}%
,\quad\forall f\in L^{2}(\mathbb{R}).
\]
It is clear that $\mathcal{F}_{\alpha}=\mathcal{F}$ when $\alpha=\pi/2$.

In 1987, McBride and Kerr   \cite{MK1987} provided a   rigorous definition
on the Schwartz space $S(\mathbb{R})$
of the FRFT in   integral form based on a modification of Namias' fractional operators.
For $\left\vert
\alpha\right\vert \in\left(  0,\pi\right)  $, McBride and Kerr   \cite{MK1987} defined the FRFT by%
\begin{equation}
(\mathcal{F}_{\alpha}f)(x)=\frac{e^{i\left(  \hat{\alpha}\pi/4-\alpha
/2\right)  }}{\sqrt{\left\vert \sin\alpha\right\vert }}e^{i\pi x^{2}\cot
\alpha}\int_{-\infty}^{+\infty}f(t)e^{-\pi i(2xt\csc\alpha-t^{2}\cot\alpha
)}\mathrm{d}t, \label{intro:fa}%
\end{equation}
where $\hat{\alpha}=\mathrm{sgn}\left(  \sin\alpha\right)  $. The definition
  extends to all $\alpha\in\mathbb{R}$ by periodicity. Moreover, these authors proved that
\begin{theorem}
[\cite{MK1987}]\label{prop:S}For  all $ f\in S(\mathbb{R})$ and all
$ \alpha,\beta\in\mathbb{R}$ we have
\begin{enumerate}
[(i)]
\item $\mathcal{F}_{\alpha}:S(\mathbb{R})\rightarrow S(\mathbb{R})$ is a homeomorphism;

\item $\mathcal{F}_{\alpha}\mathcal{F}_{\beta}f=\mathcal{F}_{\alpha+\beta}f$.
\end{enumerate}
\end{theorem}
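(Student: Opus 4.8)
The plan is to treat the two assertions separately, reducing (i) to standard facts about the Fourier transform on $S(\mathbb R)$ and then establishing the semigroup law (ii) by a kernel computation. For (i), I would factor $\mathcal F_\alpha$ into elementary operators each of which is already a homeomorphism of $S(\mathbb R)$. Rewriting the exponential in \eqref{intro:fa} as $e^{-\pi i(2xt\csc\alpha-t^2\cot\alpha)}=e^{i\pi t^2\cot\alpha}e^{-2\pi i xt\csc\alpha}$, one sees that for $|\alpha|\in(0,\pi)$
\[
(\mathcal F_\alpha f)(x)=C_\alpha\, e^{i\pi x^2\cot\alpha}\,(\mathcal F g_\alpha)(x\csc\alpha),\qquad g_\alpha(t)=e^{i\pi t^2\cot\alpha}f(t),
\]
with $C_\alpha=e^{i(\hat\alpha\pi/4-\alpha/2)}/\sqrt{|\sin\alpha|}$. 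That is, $\mathcal F_\alpha=C_\alpha\,\mathsf{Ch}_\alpha\circ\mathsf D_\alpha\circ\mathcal F\circ\mathsf{Ch}_\alpha$, where $\mathsf{Ch}_\alpha$ is multiplication by the chirp $e^{i\pi t^2\cot\alpha}$ and $\mathsf D_\alpha$ the dilation $h\mapsto h(\,\cdot\,\csc\alpha)$. Since $e^{i\pi t^2\cot\alpha}$ is smooth with all derivatives of polynomial growth, $\mathsf{Ch}_\alpha$ maps $S(\mathbb R)$ continuously into itself with continuous inverse $\mathsf{Ch}_{-\alpha}$; the dilation $\mathsf D_\alpha$ (with $\csc\alpha\ne0$) and the Fourier transform $\mathcal F$ are classical homeomorphisms of $S(\mathbb R)$. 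A composition of homeomorphisms being a homeomorphism, this yields (i), and tracking the factors backwards exhibits a continuous inverse concretely.

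For (ii), the plan is to compute the Schwartz kernel of $\mathcal F_\alpha\mathcal F_\beta$ and identify it with that of $\mathcal F_{\alpha+\beta}$. Writing $K_\gamma(x,t)=C_\gamma e^{i\pi(x^2+t^2)\cot\gamma-2\pi i xt\csc\gamma}$ for the kernel of $\mathcal F_\gamma$, part (i) already guarantees that $\mathcal F_\alpha\mathcal F_\beta f$ is a well-defined Schwartz function; to identify it I would insert a Gaussian factor $e^{-\pi\varepsilon y^2}$ in the intermediate $y$-integral, which makes the double integral absolutely convergent (legitimizing Fubini) and simultaneously regularizes the oscillatory $y$-integration. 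The problem then reduces to the kernel identity $\int_{\mathbb R}K_\alpha(x,y)K_\beta(y,t)\,dy=K_{\alpha+\beta}(x,t)$. Collecting the $y$-dependence produces a Fresnel--Gaussian integral $\int_{\mathbb R}e^{i\pi a y^2-2\pi i b y}\,dy$ with $a=\cot\alpha+\cot\beta$ and $b=x\csc\alpha+t\csc\beta$; completing the square and letting $\varepsilon\downarrow0$ gives $|a|^{-1/2}e^{i(\pi/4)\mathrm{sgn}(a)}e^{-i\pi b^2/a}$. Feeding this back and using $\cot\alpha+\cot\beta=\sin(\alpha+\beta)/(\sin\alpha\sin\beta)$ together with the companion $\csc$/$\cot$ identities, the quadratic form in $x,t$ should collapse to the exponent of $K_{\alpha+\beta}$.

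The genuinely delicate part is not the Gaussian integration but the bookkeeping of constants and phases. One must verify that $C_\alpha C_\beta$, times the Fresnel phase $e^{i(\pi/4)\mathrm{sgn}(\cot\alpha+\cot\beta)}$ and the factor $|\cot\alpha+\cot\beta|^{-1/2}$, reproduces exactly $C_{\alpha+\beta}$; after the trigonometric simplification this amounts to the Maslov-type congruence $\hat\alpha+\hat\beta+\mathrm{sgn}(\cot\alpha+\cot\beta)\equiv\widehat{\alpha+\beta}\pmod 8$, which requires a case analysis on the signs of $\sin\alpha$, $\sin\beta$ and $\sin(\alpha+\beta)$ and, in particular, on whether $\alpha+\beta$ stays in a single branch or crosses an endpoint, where the periodic extension of \eqref{intro:fa} must be invoked. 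The degenerate cases $\alpha,\beta$, or $\alpha+\beta\in\pi\mathbb Z$, for which $\csc$ and $\cot$ blow up and $\mathcal F_\gamma$ reduces to the identity or the reflection $f(x)\mapsto f(-x)$, fall outside \eqref{intro:fa} and must be handled separately as boundary cases. I expect this phase-and-sign reconciliation, rather than any single integral, to be the main obstacle. A cleaner alternative that sidesteps the constants is to verify via Mehler's formula that the integral operator \eqref{intro:fa} acts on the Hermite functions by $\mathcal F_\alpha\psi_n=e^{-in\alpha}\psi_n$, whence $\mathcal F_\alpha\mathcal F_\beta\psi_n=e^{-in(\alpha+\beta)}\psi_n=\mathcal F_{\alpha+\beta}\psi_n$, and the law extends by linearity and the density of the span of $\{\psi_n\}$ in $S(\mathbb R)$.
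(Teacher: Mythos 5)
The first thing to note is that the paper does not prove Theorem \ref{prop:S} at all: it is imported from McBride and Kerr \cite{MK1987} as background, so your attempt can only be judged on its own merits, not against an internal argument. On those merits, your part (i) is correct and complete. The factorization $\mathcal F_\alpha=C_\alpha\,\mathsf{Ch}_\alpha\circ\mathsf D_\alpha\circ\mathcal F\circ\mathsf{Ch}_\alpha$ is exactly the decomposition \eqref{eq:def} that the paper itself introduces later (Remark \ref{rm:M}) as the basis of its $L^1$ theory; each factor is a standard homeomorphism of $S(\mathbb R)$ (the chirp is smooth with polynomially bounded derivatives), and the excluded cases $\alpha\in\pi\mathbb Z$ give the identity and the reflection, which are trivially homeomorphisms.

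For part (ii), your primary route (composing the kernels $K_\alpha$ and $K_\beta$ with a Gaussian regularization) is viable, but you have left precisely the hard step undone, and you admit as much: the verification that $C_\alpha C_\beta\left\vert \cot\alpha+\cot\beta\right\vert^{-1/2}e^{i(\pi/4)\mathrm{sgn}(\cot\alpha+\cot\beta)}$ equals $C_{\alpha+\beta}$ --- the sign/branch congruence --- is where essentially all the content of the index law lives; the Fresnel integral and the cotangent identities are routine. As written, that half of the proposal is a plan rather than a proof. What saves the proposal is your fallback argument: the Mehler-formula route is complete and clean. Once one knows $\mathcal F_\alpha\psi_n=e^{-in\alpha}\psi_n$ (which is exactly what the normalization $C_\alpha$ is rigged to produce, and which also covers $\alpha\in\pi\mathbb Z$, since $\psi_n(-x)=(-1)^n\psi_n(x)=e^{-in\pi}\psi_n(x)$), the identity $\mathcal F_\alpha\mathcal F_\beta f=\mathcal F_{\alpha+\beta}f$ holds on the linear span of the Hermite functions and extends to all of $S(\mathbb R)$ because both sides are continuous on $S(\mathbb R)$ by part (i). The one point you must make explicit there is that the density you invoke is density of the Hermite span in the \emph{Schwartz topology} of $S(\mathbb R)$, not merely in $L^2(\mathbb R)$: that is a true but nontrivial classical fact about Hermite expansions, and it is the hinge on which the otherwise easy eigenfunction argument turns.
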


Later, Kerr \cite{Kerr1988} studied the $L^{2}(\mathbb{R})$ theory of $\mathcal{F}_{\alpha}$. He gave the definition of FRFT on $L^{2}%
(\mathbb{R})$ by interpreting (\ref{intro:fa}) as follows:%
\begin{equation}
(\mathcal{F}_{\alpha}f)(x)=\frac{e^{i\left(  \hat{\alpha}\pi/4-\alpha
/2\right)  }}{\sqrt{\left\vert \sin\alpha\right\vert }}e^{i\pi x^{2}\cot
\alpha}\lim_{R\rightarrow\infty}\int_{-R}^{R}f(t)e^{-\pi i(2xt\csc\alpha
-t^{2}\cot\alpha)}\mathrm{d}t \label{intro:fa2}%
\end{equation}
and proved the following result.

\begin{theorem}
[\cite{Kerr1988}]\label{prop:L2}For  all $f,g\in L^{2}(\mathbb{R})$ and all
$ \alpha,\beta\in\mathbb{R}$ we have
\begin{enumerate}
[(i)]
\item $\mathcal{F}_{\alpha}:L^{2}(\mathbb{R})\rightarrow L^{2}(\mathbb{R})$ is
a homeomorphism;

\item $\left\Vert \mathcal{F}_{\alpha}f\right\Vert _{2}=\left\Vert
f\right\Vert _{2}$;

\item $\mathcal{F}_{\alpha}\mathcal{F}_{\beta}f=\mathcal{F}_{\alpha+\beta}f$;

\item $\int_{-\infty}^{+\infty}(\mathcal{F}_{\alpha}f)(x)g(x)\mathrm{d}%
x=\int_{-\infty}^{+\infty}f(x)(\mathcal{F}_{\alpha}g)(x)\mathrm{d}x$;

\item $\{\mathcal{F}_{\alpha}:\alpha\in\mathbb{R}\}$ is a strongly continuous
unitary group of operators on $L^{2}(\mathbb{R)}$.
\end{enumerate}
\end{theorem}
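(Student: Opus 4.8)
The plan is to reduce every assertion to elementary facts about the unitary building blocks of $\mathcal{F}_\alpha$. Fix $\alpha\notin\pi\mathbb{Z}$ and note that the kernel in (\ref{intro:fa2}) factors as
\[
e^{-\pi i(2xt\csc\alpha-t^2\cot\alpha)}=e^{i\pi t^2\cot\alpha}\,e^{-2\pi i(x\csc\alpha)t}.
\]
Introducing the chirp multiplication $(\Phi_\alpha f)(t)=e^{i\pi t^2\cot\alpha}f(t)$, the dilation $(\delta_\lambda f)(x)=f(\lambda x)$, and the constant $c_\alpha=e^{i(\hat\alpha\pi/4-\alpha/2)}/\sqrt{|\sin\alpha|}$, the definition becomes
\[
\mathcal{F}_\alpha f=c_\alpha\,e^{i\pi x^2\cot\alpha}\,\delta_{\csc\alpha}\,\mathcal{F}(\Phi_\alpha f),
\]
where $\mathcal{F}$ is the Plancherel $L^2$ Fourier transform. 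Since $\Phi_\alpha f\in L^2$, the truncated integrals in (\ref{intro:fa2}) are exactly the truncations defining the $L^2$ transform of $\Phi_\alpha f$, so $\mathcal{F}_\alpha f$ is well defined on $L^2(\mathbb{R})$. Now $\Phi_\alpha$ and multiplication by $e^{i\pi x^2\cot\alpha}$ are isometries (their symbols are unimodular), $\mathcal{F}$ is an isometry by Plancherel, and $\|\delta_{\csc\alpha}h\|_2=|\sin\alpha|^{1/2}\|h\|_2$; since $|c_\alpha|=|\sin\alpha|^{-1/2}$, the dilation factor cancels and we obtain (ii), namely $\|\mathcal{F}_\alpha f\|_2=\|f\|_2$.

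For the group law (iii), I would first invoke Theorem~\ref{prop:S}: on the dense subspace $S(\mathbb{R})$, where (\ref{intro:fa}) and (\ref{intro:fa2}) coincide by absolute convergence, one already has $\mathcal{F}_\alpha\mathcal{F}_\beta=\mathcal{F}_{\alpha+\beta}$ for all $\alpha,\beta$ (the degenerate values in $\pi\mathbb{Z}$ being covered by the periodicity convention, with $\mathcal{F}_0=\mathrm{id}$). As each of $\mathcal{F}_\alpha$, $\mathcal{F}_\beta$, $\mathcal{F}_{\alpha+\beta}$ is bounded on $L^2$ by (ii), this identity extends from $S(\mathbb{R})$ to all of $L^2(\mathbb{R})$ by density. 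Part (i) is then immediate: (iii) with $\beta=-\alpha$ and $\mathcal{F}_0=\mathrm{id}$ shows $\mathcal{F}_{-\alpha}$ is a two-sided inverse, so $\mathcal{F}_\alpha$ is a continuous bijection with continuous inverse. For the multiplication formula (iv), the key observation is that the full kernel
\[
K_\alpha(x,t)=c_\alpha\,e^{i\pi(x^2+t^2)\cot\alpha-2\pi ixt\csc\alpha}
\]
is symmetric, $K_\alpha(x,t)=K_\alpha(t,x)$; hence for $f,g\in S(\mathbb{R})$ Fubini's theorem yields $\int(\mathcal{F}_\alpha f)g=\int f(\mathcal{F}_\alpha g)$, and since both bilinear forms are continuous on $L^2\times L^2$ (Cauchy--Schwarz together with (ii)), the identity passes to $L^2(\mathbb{R})$ by density.

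It remains to assemble (v). The operator $\mathcal{F}_\alpha$ is an isometry by (ii) and onto by (i), hence unitary, with $\mathcal{F}_\alpha^{-1}=\mathcal{F}_{-\alpha}$; together with (iii) and $\mathcal{F}_0=\mathrm{id}$ this exhibits $\{\mathcal{F}_\alpha\}_{\alpha\in\mathbb{R}}$ as a one-parameter group of unitaries. For strong continuity I would exploit the group structure to localize the problem at the identity: since
\[
\|\mathcal{F}_\alpha f-\mathcal{F}_{\alpha_0}f\|_2=\|\mathcal{F}_{\alpha_0}(\mathcal{F}_{\alpha-\alpha_0}f-f)\|_2=\|\mathcal{F}_{\alpha-\alpha_0}f-f\|_2,
\]
it suffices to prove $\mathcal{F}_\beta f\to f$ in $L^2$ as $\beta\to0$. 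The cleanest route is the Hermite representation $\mathcal{F}_\beta\psi_n=e^{-in\beta}\psi_n$ recalled in the introduction: writing $f=\sum_n(f,\psi_n)_2\psi_n$ gives $\|\mathcal{F}_\beta f-f\|_2^2=\sum_n|e^{-in\beta}-1|^2|(f,\psi_n)_2|^2$, which tends to $0$ as $\beta\to0$ by dominated convergence for series, the summand being dominated by the summable sequence $4|(f,\psi_n)_2|^2$.

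I expect the genuine difficulty to lie in this last step. The integral kernel degenerates as $\beta\to0$ (the constant $c_\beta$ blows up and both $\cot\beta$ and $\csc\beta$ become singular), so a direct estimate from (\ref{intro:fa2}) amounts to an approximate-identity/stationary-phase analysis. The Hermite route avoids this, but only after one reconciles the integral definition (\ref{intro:fa2}) with the spectral definition $\mathcal{F}_\beta f=\sum_n e^{-in\beta}(f,\psi_n)_2\psi_n$. That reconciliation --- checking that (\ref{intro:fa2}) reproduces the eigenvalue $e^{-in\beta}$ on each Hermite function, which rests on Mehler's formula --- is the main technical obstacle; once it is in place, both the spectral identity and strong continuity follow cleanly.
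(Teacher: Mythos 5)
You should first note a mismatch of targets: the paper never proves Theorem \ref{prop:L2} at all --- it is stated as background and attributed to Kerr \cite{Kerr1988} --- so your proposal can only be compared with the cited literature, not with an in-paper argument. On the merits, your treatment of (i)--(iv) is correct and essentially the canonical one. The factorization of $\mathcal{F}_\alpha$ into unimodular chirps, a dilation, and the Plancherel transform gives (ii) immediately (it is the same decomposition the paper itself exploits in \eqref{eq:def} for its $L^1$ theory); the group law (iii) follows legitimately by density from Theorem \ref{prop:S}, since \eqref{intro:fa} and \eqref{intro:fa2} agree on $S(\mathbb{R})$ and all three operators involved are $L^2$-bounded by (ii); (i) is then immediate with inverse $\mathcal{F}_{-\alpha}$; and (iv) follows from the kernel symmetry $K_\alpha(x,t)=K_\alpha(t,x)$, Fubini on $S(\mathbb{R})$, and $L^2\times L^2$ continuity of both bilinear forms --- the same mechanism as the paper's own $L^1$ multiplication formula, Theorem \ref{th:mp}.

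The one genuine gap is the one you flag yourself: the last step of (v). Your reduction of strong continuity to continuity at $\alpha=0$ is fine, but the conclusion rests on the identity $\mathcal{F}_\beta\psi_n=e^{-in\beta}\psi_n$ \emph{for the integral definition} \eqref{intro:fa2}, i.e., on the consistency of the McBride--Kerr operator with Namias' spectral definition. That consistency is exactly the nontrivial content of \cite{namias,MK1987,Kerr1988} (a Mehler-formula computation), and your proposal invokes it without proof, so as written (v) is reduced to a cited fact rather than established. Two ways to close it: (a) actually carry out the Mehler/Hermite computation showing that the kernel in \eqref{intro:fa2} has $\psi_n$ as eigenfunction with unimodular eigenvalue tending to $1$ as $\beta\to0$; or (b) bypass Hermite functions entirely via von Neumann's theorem, which says a weakly measurable one-parameter unitary group on a separable Hilbert space is automatically strongly continuous. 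Weak measurability is cheap here: for $f,g\in C_c(\mathbb{R})$ the map $\beta\mapsto\int\int K_\beta(x,t)f(t)\overline{g(x)}\,\mathrm{d}t\,\mathrm{d}x$ is continuous on $\mathbb{R}\setminus\pi\mathbb{Z}$ by dominated convergence, because $K_\beta$ is jointly continuous there and $|K_\beta|=|\sin\beta|^{-1/2}$ is locally uniformly bounded away from $\pi\mathbb{Z}$; uniform boundedness $\|\mathcal{F}_\beta\|_{2\to2}=1$ transfers this continuity to all $f,g\in L^2(\mathbb{R})$, and continuity off the countable set $\pi\mathbb{Z}$ gives measurability on $\mathbb{R}$. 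With either (a) or (b) supplied, your proof of the full theorem is complete; without one of them, you have proved (i)--(iv) and only sketched (v).
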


For additional work in this area, refer to \cite{KF1988,KLM}, etc..

In an attempt to take the theory of FRFT beyond
$S(\mathbb{R})$ or $L^{2}(\mathbb{R)}$, we discuss in this paper  (Section \ref{sect:Lp})
the behavior of FRFT on $L^{p}(\mathbb{R})$ for $1\leq p<2$. In Section \ref{sect:L1},
we discuss the elementary properties of FRFT on $L^{1}%
(\mathbb{R})$. Section \ref{sect:invs} is devoted to the problem of FRFT inversion,   which
is established via an approximation in terms of FRFT integral means. In Section \ref{sect:multiplier}, we discuss  $L^p$ multiplier results and a Littlewood-Paley theorem associated with FRFT. Using the language of time-frequency analysis, this   means that    an $L^{1}$ chirp signal,
whose FRFT is non-integrable, is recovered from the frequency domain
as a limit of the inverted Abel means of its FRFT; this is discussed in the last section.

\section{FRFT on $L^{1}(\mathbb{R})$}

\label{sect:L1}
It is natural to begin our exposition by
defining the FRFT on $L^{1}(\mathbb{R})$; our definition is like that in \cite{MK1987}. In
$L^{1}(\mathbb{R})$, problems of convergence arise when certain manipulations of
functions are performed and FRFT inversion is not possible.

\begin{definition}
For $f\in L^{1}(\mathbb{R})$ and $\alpha\in\mathbb{R}$, the \emph{fractional
Fourier transform} of order $\alpha$ of $f$ is defined by
\begin{equation}
(\mathcal{F}_{\alpha}f)(x)=\left\{
\begin{array}
[c]{ll}%
\int_{-\infty}^{+\infty}K_{\alpha}(x,t)f(t)\, \mathrm{d}t, & \alpha\neq n\pi,\quad
n\in\mathbb{N},\\
f(x), & \alpha=2n\pi,\\
f(-x), & \alpha=(2n+1)\pi,
\end{array}
\right.  \label{eq:fa}%
\end{equation}
where
\[
K_{\alpha}(x,t)=A_{\alpha}\exp\left[  2\pi i\left(  \frac{t^{2}}{2}\cot
\alpha-xt\csc\alpha+\frac{x^{2}}{2}\cot\alpha\right)  \right]
\]
is the kernel of FRFT and
\begin{equation}\label{defAa}
A_{\alpha}=\sqrt{1-i\cot\alpha}.
\end{equation}
\end{definition}

As the parameter $\alpha$ only appears as an argument of trigonometric functions  (see
(\ref{eq:fa})), it follows that $\mathcal{F}_{\alpha}$ is  $2\pi$-periodic
with respect to $\alpha$. Hence, throughout this paper we shall always assume
$\alpha\in\lbrack0,2\pi)$.

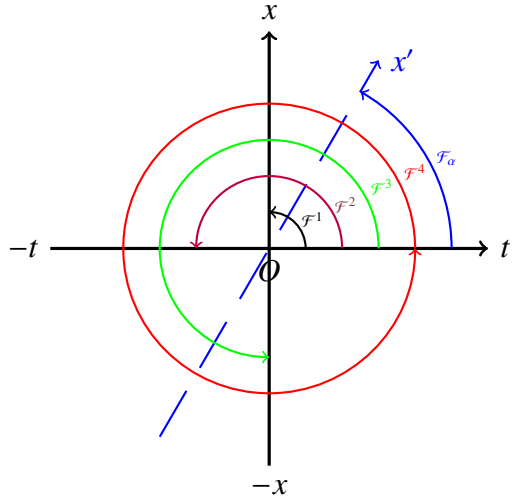
\begin{figure}[htb]
\centering
\begin{tikzpicture}[node distance=1cm,scale=0.48]
\draw[->,very thick](-6,0)node[left]{$-t$}--(6,0) node[right]{$t$};
\draw[->,very thick](0,-6)node[below]{$-x$}--(0,6)node[above]{$x$};
\draw[->,thick,blue,dash pattern=on 20pt off 10pt](-3,-5.2)--(3,5.2) node[right]{$x'$};
\draw[->,thick,blue] (5,0) arc (0:60:5);
\draw[->,thick,red] (4,0) arc (0:360:4);
\draw[->,thick, green] (3,0) arc (0:270:3);
\draw[->,thick,purple] (2,0) arc (0:180:2);
\draw[->,thick] (1,0) arc (0:90:1);
\node[thick,below] at (0,0){$O$};
\node[right] at (4.3,2.5){\tiny$\textcolor[rgb]{0,0,1}{\mathcal F_\alpha}$};
\node[right] at (3.4,2.1){\tiny$\textcolor[rgb]{1,0,0}{\mathcal F^4}$};
\node[right] at (2.5,1.7){\tiny$\textcolor[rgb]{0,1,0}{\mathcal F^3}$};
\node[right] at (1.5,1.2){\tiny$\textcolor[rgb]{0.5,0.25,0.25}{\mathcal F^2}$};
\node[right] at (0.6,0.8){\tiny$\mathcal F^1$};
\end{tikzpicture}
\caption{rotation of time-frequency domain}%
\label{fig:rot}%
\end{figure}

Notice now that when $n\in\mathbb{Z}$, $\mathcal{F}_{n\pi/2}f=\mathcal{F}%
^{n}f$, where $\mathcal{F}^{n}$ is the $n$th power of the classical Fourier
operator (\ref{ft}). Therefore, $\mathcal{F}_{\alpha}$ can be regarded as the
$s$th power of the Fourier transform, where $s=2\alpha/\pi$, that is,
\[
\mathcal{F}^{s}f=\mathcal{F}_{s\pi/2}f.
\]
Denote by $\mathcal{I}$ the identity operator and $\mathcal{P}$ the reflection
operator defined by $\mathcal{P}g(x) = g(-x)$. We can easily see that (Figure \ref{fig:rot})%
\begin{align*}
&  \mathcal{F}^{0}=\mathcal{F}_{0}=\mathcal{I};\\
&  \mathcal{F}^{1}=\mathcal{F}_{\pi/2}=\mathcal{F};\\
&  \mathcal{F}^{2}=\mathcal{F}_{\pi}=\mathcal{P};\\
&  \mathcal{F}^{3}=\mathcal{F}_{3\pi/2}=\mathcal{FP=PF};\\
&  \mathcal{F}^{4}=\mathcal{F}_{2\pi}=\mathcal{F}^{0}=\mathcal{I};\\
&  \mathcal{F}^{4n\pm s}=\mathcal{F}_{2n\pi\pm\alpha}=\mathcal{F}_{\pm\alpha
}=\mathcal{F}^{\pm s}.
\end{align*}

Every signal (or function) can be described indirectly and uniquely by a
Wigner distribution function (WDF). The classical Fourier transform
$\mathcal{F}$ lets the WDF rotate by an angle of $\pi/2$. Hence,
$\mathcal{F}u$ is the function corresponding to the WDF obtained by rotating the original
 WDF of $u$ by an angle   $\pi/2$. Analogously, the FRFT of order
$\alpha$ is the unique function whose WDF is obtained by rotating the  original
 WDF of $u$ by an angle   $\alpha$. We refer to \cite{Lohmann93} for more details on this.

\begin{example}
Define the following function on the line:
\[
f =\sum_{n=1}^\infty n \chi_{[n, n+\frac 1{n^3} )}.
\]
Using (\ref{eq:fa}), we can easily calculate the FRFT of this function:
\[
\left(  \mathcal{F}_{\alpha}f\right)  (x)=\frac{A_{\alpha}e^{i\pi x^{2}%
\cot\alpha}}{2\pi ix}\sum\limits_{n=1}^{\infty}ne^{-2n\pi ix}\left(
1-e^{-\frac{2\pi ix}{n^{3}}}\right)  ,
\]
where $A_\alpha$ is as in \eqref{defAa}.
This function lies in $L^{1}(\mathbb{R})$ but not in $L^{2}(\mathbb{R})$ as
\[
\int_{-\infty}^{+\infty}\left\vert f(t)\right\vert \mathrm{d}t=\sum
\limits_{n=1}^{\infty}\int_{n}^{n+\frac{1}{n^{3}}}n\,\mathrm{d}t=\sum
\limits_{n=1}^{\infty}\frac{1}{n^{2}}=\frac{\pi^{2}}{6}<\infty%
\]
and%
\[
\int_{-\infty}^{+\infty}\left\vert f(t)\right\vert ^{2}\mathrm{d}%
t=\sum\limits_{n=1}^{\infty}\int_{n}^{n+\frac{1}{n^{3}}}n^{2}\,\mathrm{d}%
t=\sum\limits_{n=1}^{\infty}\frac{1}{n} =\infty.
\]
\end{example}

\begin{remark}
\label{rm:M}Define the chirp operator $\mathcal{M}_{\alpha}$ acting on functions $
 \phi$  in $L^{1}(\mathbb{R})$ as follows:
\[
\mathcal{M}_{\alpha}\phi(x)=e^{\pi ix^{2}\cot\alpha}\phi(x).
\]
Then for $\alpha\neq n\pi$, let $A_\alpha$ be  as in \eqref{defAa}.
Then  the FRFT of $f\in L^{1}(\mathbb{R})$ can be
written as
\begin{align}
(\mathcal{F}_{\alpha}f)(x)  &  =A_{\alpha}e^{i\pi x^{2}\cot\alpha}%
\mathcal{F}[e^{i\pi t^{2}\cot\alpha}f(t)](x\csc\alpha)\nonumber\\
&  =A_{\alpha}\mathcal{M}_{\alpha}\mathcal{F}[\mathcal{M}_{\alpha}%
f(t)](x\csc\alpha). \label{eq:def}%
\end{align}

In view of (\ref{eq:def}), we see that the FRFT of a function (or signal)
$u(t)$ can be decomposed into four simpler operators,
according to the diagram of Figure \ref{fig:com}:

\begin{enumerate}
[(i)]

\item multiplication by a chirp signal, $g(t)=e^{\pi it^{2}\cot\alpha}u(t)$;

\item Fourier transform, $\hat{g}(x)=(\mathcal{F}g)(x)$;

\item scaling, $\tilde{g}(x)=\hat{g}(x\csc\alpha)$;

\item multiplication by a chirp signal, $(\mathcal{F}_{\alpha}u)(x)=A_{\alpha
}e^{\pi ix^{2}\cot\alpha}\tilde{g}(x)$.
\end{enumerate}
\end{remark}

\begin{figure}[htb]
\centering
{\small \begin{tikzpicture}[node distance=20mm,thick]
		\node (start){$u(t)$};
		\node[circle,draw,inner sep=2pt,right of=start] (MA){$\times$};
\node[below of =MA] (p1){{$e^{i\pi t^2 cot\alpha}$}};
		\node[rectangle,draw,right of=MA] (FT){$\mathcal F$};
		\node[rectangle,draw,right of=FT] (SC){Scaling};
		\node[circle,draw,inner sep=2pt,right of=SC] (MA2){$\times$};
\node[below of =MA2] (p2){$A_\alpha e^{i\pi x^2 cot\alpha}$};
		\node[right of=MA2] (end){$(\mathcal F _\alpha u)(x)$};
		\draw[->](start)--(MA);
		\draw[->](MA)--node[above]{$g(t)$}(FT);
		\draw[->](FT)--node[above]{$\hat{g}(x)$}(SC);
		\draw[->](SC)--node[above]{$\tilde{g}(x)$}(MA2);
		\draw[->](MA2)--(end);
		\draw[->](p1)--(MA);
		\draw[->] (p2) -- (MA2);
		\end{tikzpicture}
}\caption{the decomposition of the FRFT}%
\label{fig:com}%
\end{figure}
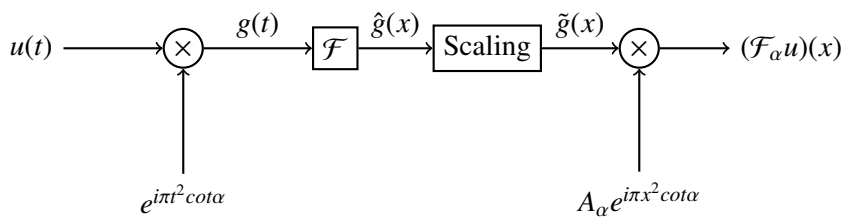

In view of the decomposition (\ref{eq:def}) of the FRFT, the boundedness
properties of the fractional Fourier operator $\mathcal{F}_{\alpha}$ is
largely the same of the classical Fourier operator $\mathcal{F}$. However, due
to the factors $e^{i\pi x^{2}\cot\alpha}$and $e^{i\pi t^{2}\cot\alpha}$, the
convergence properties are not trival.
We now discuss some basic properties of the FRFT on $L^{1}(\mathbb{R})$.

Firstly, we consider the behavior of FRFT at infinity. The following
  is the fractional version of the Riemann-Lebesgue lemma.

\begin{lemma}
[Riemann-Lebesgue lemma]\label{th:RL}For $f\in L^{1}(\mathbb{R})$, we have
that%
\[
\left\vert \left(  \mathcal{F}_{\alpha}f\right)  (x)\right\vert \rightarrow0
\]
as $x\rightarrow\infty$.
\end{lemma}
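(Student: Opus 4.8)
The plan is to reduce the statement to the classical Riemann-Lebesgue lemma by exploiting the chirp decomposition \eqref{eq:def} from Remark \ref{rm:M}. Throughout I take $\alpha \neq n\pi$, so that the kernel representation of $\mathcal{F}_\alpha$ is in force (for the excluded values $\alpha = n\pi$ the operator is $f$ or its reflection, and the decay conclusion can genuinely fail, as the preceding Example already exhibits an $L^1$ function that does not tend to $0$ at infinity; the genuinely fractional range is the relevant one). For such $\alpha$ I would start from
\[
(\mathcal{F}_\alpha f)(x) = A_\alpha\, e^{i\pi x^2 \cot\alpha}\, \mathcal{F}[\mathcal{M}_\alpha f](x\csc\alpha),
\]
and set $g = \mathcal{M}_\alpha f$. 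The first observation is that $\mathcal{M}_\alpha$ is an $L^1$-isometry: since $|e^{i\pi t^2\cot\alpha}| = 1$ we have $|g(t)| = |f(t)|$ pointwise, hence $g \in L^1(\mathbb{R})$ with $\|g\|_1 = \|f\|_1$.

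Next I would pass to absolute values. Because $|A_\alpha|$ is a finite constant (from \eqref{defAa}) and the outer chirp $e^{i\pi x^2\cot\alpha}$ is unimodular, the two modulation factors are harmless and I obtain
\[
|(\mathcal{F}_\alpha f)(x)| = |A_\alpha|\, |(\mathcal{F}g)(x\csc\alpha)|.
\]
The problem is thereby reduced to the decay of the \emph{classical} Fourier transform $\mathcal{F}g$ evaluated along the rescaled argument $x\csc\alpha$. Invoking the classical Riemann-Lebesgue lemma for $g \in L^1(\mathbb{R})$ gives $(\mathcal{F}g)(\xi)\to 0$ as $\xi\to\infty$; since $\alpha\neq n\pi$ forces $\csc\alpha$ to be finite and nonzero, we have $|x\csc\alpha| = |\csc\alpha|\,|x|\to\infty$ as $x\to\infty$, so substituting $\xi = x\csc\alpha$ yields $|(\mathcal{F}_\alpha f)(x)|\to 0$, as claimed.

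I do not expect a substantial obstacle here: the whole argument is a transfer of the classical result through the decomposition. The only points deserving care are precisely those just isolated, namely that the chirp modulation preserves integrability (guaranteed by the modulus-one factor) and that the scaling $x\mapsto x\csc\alpha$ does not obstruct the passage to infinity (guaranteed by $\csc\alpha\neq 0$ for $\alpha\neq n\pi$). Both are immediate, which is why the fractional Riemann-Lebesgue lemma inherits directly from its classical counterpart.
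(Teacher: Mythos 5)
Your proof is correct and takes essentially the same route as the paper's: both reduce the claim to the classical Riemann--Lebesgue lemma via the chirp decomposition \eqref{eq:def}, using that $\mathcal{M}_{\alpha}$ preserves $L^{1}(\mathbb{R})$ and that the factors $A_{\alpha}$ and $e^{i\pi x^{2}\cot\alpha}$ only contribute a fixed modulus. Your explicit treatment of the excluded values $\alpha=n\pi$ and of the nonvanishing of $\csc\alpha$ is a touch more careful than the paper's terse argument, but the substance is identical.
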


\begin{proof}
Since $\mathcal{M}_{\alpha}f\in L^{1}(\mathbb{R})$, then $|\mathcal{F}%
(\mathcal{M}_{\alpha}f)(x)|\rightarrow0$ as $x\rightarrow\infty$ by the
Riemann-Lebesgue lemma for the classical Fourier transform. Hence, it follows
from (\ref{eq:def}) and the boundedness of $\mathcal{M}_{\alpha}$ that
\[
|\left(  \mathcal{F}_{\alpha}f\right)  (x)|=|A_{\alpha}\mathcal{M}_{\alpha
}\mathcal{F}(\mathcal{M}_{\alpha}f)(x\csc\alpha)|\rightarrow 0
\]
as $x\rightarrow\infty$.
\end{proof}

\begin{proposition}
\label{th:cont} The following statements are valid:
\begin{enumerate}
\item[(i)] The FRFT $\mathcal{F}_{\alpha}$ is a bounded linear operator from
$L^{1}(\mathbb{R})\rightarrow L^{\infty}(\mathbb{R})$.

\item[(ii)] For $f\in L^{1}(\mathbb{R})$, $\mathcal{F}_{\alpha}f$ is uniformly
continuous on $\mathbb{R}$.
\end{enumerate}
\end{proposition}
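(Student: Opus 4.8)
The plan is to reduce both assertions to the classical Fourier transform through the decomposition \eqref{eq:def}, isolating the effect of the two chirp factors. For part (i), observe that the kernel satisfies $|K_\alpha(x,t)| = |A_\alpha|$ for every $x,t$, since the exponential in its definition has purely imaginary argument and hence modulus one, with $A_\alpha$ as in \eqref{defAa}. Linearity of $\mathcal{F}_\alpha$ is immediate from the integral definition, and the pointwise estimate
\[
|(\mathcal{F}_\alpha f)(x)| \le \int_{-\infty}^{+\infty} |K_\alpha(x,t)|\,|f(t)|\,\mathrm{d}t = |A_\alpha|\,\|f\|_1
\]
yields $\|\mathcal{F}_\alpha f\|_\infty \le |A_\alpha|\,\|f\|_1$, which is the asserted boundedness.

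For part (ii), I would write $\mathcal{F}_\alpha f = A_\alpha\, g\, h$, where $g(x) = e^{\pi i x^2 \cot\alpha}$ is the outer chirp $\mathcal{M}_\alpha$ and $h(x) = \mathcal{F}[\mathcal{M}_\alpha f](x\csc\alpha)$. Since $|\mathcal{M}_\alpha f| = |f|$, the function $\mathcal{M}_\alpha f$ lies in $L^1(\mathbb{R})$, so its classical Fourier transform is bounded and uniformly continuous; precomposing with the Lipschitz dilation $x \mapsto x\csc\alpha$ preserves both properties, so $h$ is bounded and uniformly continuous. Moreover, by Lemma \ref{th:RL} (equivalently, the classical Riemann-Lebesgue lemma) we have $|h(x)| \to 0$ as $x \to \infty$. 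The main obstacle is that the product $gh$ cannot be treated by the naive ``product of uniformly continuous functions'' argument, because the chirp $g$, although of modulus one, oscillates ever faster and is \emph{not} uniformly continuous on $\mathbb{R}$.

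The resolution is to let the decay of $h$ absorb the bad oscillation of $g$. Splitting
\[
|g(x)h(x) - g(y)h(y)| \le |h(x) - h(y)| + |h(y)|\,|g(x) - g(y)|,
\]
the first term is controlled by the uniform continuity of $h$. For the second, given $\varepsilon > 0$ I would choose $R$ so that $|h(y)| < \varepsilon$ for $|y| > R$, and then use the ordinary continuity of $g$ on the compact interval $[-R-1, R+1]$ to select $\delta \in (0,1)$ with $|g(x)-g(y)| < \varepsilon$ there. For $|x-y| < \delta$ one distinguishes two cases: if $|y| \le R$ both points lie in $[-R-1,R+1]$ and the second term is at most $\|h\|_\infty\,\varepsilon$; if $|y| > R$ the second term is at most $2|h(y)| < 2\varepsilon$ since $|g| = 1$. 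In either case the bound is uniform in $x,y$, which gives the uniform continuity of $gh$, and hence of $\mathcal{F}_\alpha f$. The point requiring care is precisely this balancing of the chirp's oscillation against the Riemann-Lebesgue decay of $h$.
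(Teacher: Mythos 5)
Your proof is correct, and while your part (i) is essentially the paper's argument (both reduce to the observation that $|K_\alpha(x,t)|=|A_\alpha|$, yielding $\|\mathcal{F}_\alpha f\|_\infty\leq|A_\alpha|\,\|f\|_1$), your part (ii) takes a genuinely different route. The paper never factors out the chirp: it argues directly on $\mathcal{F}_\alpha f$, splitting into the case where both points lie outside $[-\eta,\eta]$ --- there Lemma \ref{th:RL} makes both values smaller than $\varepsilon/2$, so their difference is small with no continuity input at all --- and the case where both points lie in $[-\eta-1,\eta+1]$, where it applies the Lagrange mean value theorem to $x\mapsto K_\alpha(x,t)$, splits the $t$-integral into $|t|\leq N$ and $|t|\geq N$, and arrives at a bound of the form $\varepsilon/2 + C\,|x_1-x_2|\,\|f\|_1$. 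You instead push the factorization \eqref{eq:def} to the end: the inner factor $h=\mathcal{F}(\mathcal{M}_\alpha f)(\,\cdot\,\csc\alpha)$ is bounded, uniformly continuous and vanishes at infinity by classical theory, and the only genuinely new difficulty --- which you correctly identify --- is that the outer chirp $g$ is not uniformly continuous on $\mathbb{R}$; you neutralize its oscillation with the decay of $h$ via the splitting $|h(x)-h(y)|+|h(y)|\,|g(x)-g(y)|$ and the case analysis $|y|\leq R$ (local uniform continuity of $g$) versus $|y|>R$ (Riemann--Lebesgue decay), which is sound. Your route is more modular: it uses the classical uniform continuity of the Fourier transform as a black box, isolates exactly what is ``fractional'' about the problem, and in fact proves the general lemma that the product of a bounded continuous function with a uniformly continuous function vanishing at infinity is uniformly continuous. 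The paper's route is more self-contained and quantitative: it needs only Lemma \ref{th:RL} plus a kernel-derivative estimate, and it produces an explicit Lipschitz-type modulus of continuity on compact intervals. Note that both arguments (and the statement itself) implicitly assume $\alpha\neq n\pi$, since otherwise $\mathcal{F}_\alpha f=f(\pm\,\cdot\,)$ need not even be continuous; this caveat is shared, not a defect of your proof.
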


\begin{proof}
(i) It is obvious that $\mathcal{F}_{\alpha}$ is linear. Moreover the claimed boundedness holds as
\[
\left\Vert \mathcal{F}_{\alpha}f\right\Vert _{\infty}     =|A_{\alpha
}|\left\Vert \mathcal{F}(\mathcal{M}_{\alpha}f)\right\Vert _{\infty}
   \leq |A_{\alpha}|\left\Vert \mathcal{M}_{\alpha}f\right\Vert _{1}
  =|A_{\alpha}|\left\Vert f\right\Vert _{1}.
\]
\noindent (ii) For an arbitrary $\varepsilon>0$, it follows from Lemma \ref{th:RL} that there exists $\eta>0$ such that for every $x_i\in \mathbb{R}\setminus [-\eta,\eta]$, $|\left( \mathcal{F}_{\alpha}f\right)  (x_i)|< \varepsilon/2, i=1, 2$. Thus
\[
\left\vert \left(  \mathcal{F}_{\alpha}f\right)  (x_1)-\left(  \mathcal{F}_{\alpha}f\right)  (x_2)\right\vert  < \varepsilon.
\]

For every $x_{1},x_{2}\in [-\eta-1,\eta+1]$,
by the Lagrange mean value theorem, there
exists $\xi$ between $x_{1}$ and $x_{2}$ such that
\begin{align*}
K_{\alpha}(x_{1},t)-K_{\alpha}(x_{2},t)  &  =\frac{\partial}{\partial
x}K_{\alpha}\left(  \xi,t\right)  \left(  x_{1}-x_{2}\right) \\
&  =2\pi iA_{a}\left(  \xi\cot\alpha-t\csc\alpha\right)  K_{\alpha}\left(
\xi,t\right)  \left(  x_{1}-x_{2}\right)  .
\end{align*}
There exist $N>0$ such that for $\left\vert t\right\vert \geq N$,
\[
\int_{\left\vert t\right\vert \geq N}\left\vert f\left(  t\right)  \right\vert
\mathrm{d}t<\frac{\varepsilon}{4}.
\]
Hence,
\begin{align*}
\left\vert \left(  \mathcal{F}_{\alpha}f\right)  \left(  x_{1}\right)
-\left(  \mathcal{F}_{\alpha}f\right)  \left(  x_{2}\right)  \right\vert  &
=\left\vert \int_{-\infty}^{+\infty}\left(  K_{\alpha}(x_{1},t)f(t)-K_{\alpha
}(x_{2},t)f(t)\right)  \mathrm{d}t\right\vert \\
&  \leq2\int_{\left\vert t\right\vert \geq N}\left\vert f\left(  t\right)
\right\vert \mathrm{d}t+\left\vert \int_{\left\vert t\right\vert \leq
N}f(t)\left(  K_{\alpha}(x_{1},t)-K_{\alpha}(x_{2},t)\right)  \mathrm{d}%
t\right\vert \\
&  <\frac{\varepsilon}{2}+2\pi |A_{\alpha}|\int_{\left\vert t\right\vert \leq
N}\left\vert f(t)\right\vert \left\vert \xi\cot\alpha-t\csc\alpha\right\vert
\left\vert x_{1}-x_{2}\right\vert \mathrm{d}t\\
&  <\frac{\varepsilon}{2}+C\left\vert x_{1}-x_{2}\right\vert \int_{\left\vert
t\right\vert \leq N}\left\vert f(t)\right\vert \mathrm{d}t\\
&  \leq\frac{\varepsilon}{2}+C\left\vert x_{1}-x_{2}\right\vert \left\Vert
f\right\Vert _{1},
\end{align*}
where $C$ is a constant independent of $x_1,x_2$. Then for
\[
\left\vert x_{1}-x_{2}\right\vert <\frac{\varepsilon}{2C\left\Vert
f\right\Vert _{1}}.
\]
we obtain
\[
\left\vert \left(  \mathcal{F}_{\alpha}f\right)  \left(  x_{1}\right)
-\left(  \mathcal{F}_{\alpha}f\right)  \left(  x_{2}\right)  \right\vert
<\varepsilon.
\]
So, we conclude that $\mathcal{F}_{\alpha}f$ is uniformly continuous on
$\mathbb{R}$.
\end{proof}

Lemma \ref{th:RL} and Proposition \ref{th:cont} imply that
\begin{equation}\label{implication}
f\in L^{1}(\mathbb{R})\Rightarrow\mathcal{F}_{\alpha}f\in C_{0}(\mathbb{R}).
\end{equation}
A natural question is whether the reverse implication to \eqref{implication} holds, precisely,

\begin{flushleft}
\textbf{Question.} Given $g\in C_{0}(\mathbb{R})$, is there a $L^{1}$-function
$f$ such that $\mathcal{F}_{\alpha}f=g$?
\end{flushleft}

The answer to this question is negative      as the following example illustrates.

\begin{example} Let
\[
g(x)=\left\{
\begin{array}
[c]{ll}%
\left(  \ln x\right)  ^{-1}e^{\pi ix^{2}\cot\alpha}, & x\geq e,\\
xe^{\pi ix^{2}\cot\alpha-1}, & -e<x<e,\\
-\left(  \ln(-x)\right)  ^{-1}e^{\pi ix^{2}\cot\alpha}, & x\leq-e.
\end{array}
\right.
\]
Then $g\in C_{0}(\mathbb{R})$ and $g$ is not the FRFT\ of any $L^{1}%
$-function. To show this, we need first to prove the following.

\begin{claim}
\label{cl:g}If $f\in L^{1}(\mathbb{R})$ and $\mathcal{F}_{\alpha}f$ is odd,
then
\[
\left\vert \int_{\varepsilon}^{N}\frac{(\mathcal{F}_{\alpha}f)(x)}{x}e^{-\pi
ix^{2}\cot\alpha}\mathrm{d}x\right\vert \leq4\left\vert A_{\alpha}\right\vert
\left\Vert f\right\Vert _{1}%
\]
for all $N>\varepsilon>0$.
\end{claim}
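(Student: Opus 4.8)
The plan is to use the chirp decomposition \eqref{eq:def} to strip away the oscillatory factors and reduce the statement to a classical fact about the ordinary Fourier transform. Write $g=\mathcal{M}_{\alpha}f=e^{\pi i t^{2}\cot\alpha}f$, so that $g\in L^{1}(\mathbb{R})$ with $\|g\|_{1}=\|f\|_{1}$, and by \eqref{eq:def},
\[
(\mathcal{F}_{\alpha}f)(x)=A_{\alpha}e^{\pi i x^{2}\cot\alpha}\,\widehat{g}(x\csc\alpha).
\]
Multiplying by $e^{-\pi i x^{2}\cot\alpha}/x$ cancels the chirp exactly, so that
\[
\int_{\varepsilon}^{N}\frac{(\mathcal{F}_{\alpha}f)(x)}{x}e^{-\pi i x^{2}\cot\alpha}\,\mathrm{d}x
=A_{\alpha}\int_{\varepsilon}^{N}\frac{\widehat{g}(x\csc\alpha)}{x}\,\mathrm{d}x,
\]
and it suffices to bound the integral on the right by $4\|g\|_{1}$. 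First I would record that the hypothesis ``$\mathcal{F}_{\alpha}f$ is odd'' is equivalent to ``$\widehat{g}$ is odd'', since $A_{\alpha}e^{\pi i x^{2}\cot\alpha}$ is an even, nonvanishing factor.

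Next I would perform the change of variables $u=x\csc\alpha$, which leaves $\mathrm{d}x/x=\mathrm{d}u/u$ invariant and turns the integral into one of the form $\int \widehat{g}(u)/u\,\mathrm{d}u$. Here one must treat the sign of $\csc\alpha$ with care: for $\alpha\in(0,\pi)$ the limits of integration stay positive, while for $\alpha\in(\pi,2\pi)$ they become negative, and one then uses that $\widehat{g}(u)/u$ is even (because $\widehat{g}$ is odd) to fold the integral back onto an interval $[a,b]\subset(0,\infty)$. Either way the problem reduces to estimating $\int_{a}^{b}\widehat{g}(u)/u\,\mathrm{d}u$ for $0<a<b$.

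The key step is then the classical symmetrization argument. Since $\widehat{g}$ is odd, only the sine part of the exponential survives: $\widehat{g}(u)=-i\int_{\mathbb{R}}g(t)\sin(2\pi ut)\,\mathrm{d}t$. This is precisely where the hypothesis is essential, since the even (cosine) part of $\widehat{g}$ would produce a term comparable to $\int_{a}^{b}\frac{\mathrm{d}u}{u}$ that diverges logarithmically as $a\to0$, and oddness removes it. On the bounded strip $[a,b]\times\mathbb{R}$ the integrand $g(t)\sin(2\pi ut)/u$ is dominated by $|g(t)|/a$, which is integrable, so Fubini applies with no delicacy and yields
\[
\int_{a}^{b}\frac{\widehat{g}(u)}{u}\,\mathrm{d}u=-i\int_{\mathbb{R}}g(t)\left(\int_{a}^{b}\frac{\sin(2\pi ut)}{u}\,\mathrm{d}u\right)\mathrm{d}t.
\]

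Finally I would invoke the uniform bound on the sine integral: after the substitution $w=2\pi ut$ the inner integral equals $\int_{2\pi at}^{2\pi bt}(\sin w)/w\,\mathrm{d}w$ (with a harmless sign change for $t<0$), and since $\int_{0}^{x}(\sin w)/w\,\mathrm{d}w$ takes values in $[0,\mathrm{Si}(\pi)]$ for $x\ge0$, this is bounded in absolute value by a universal constant independent of $a,b,t$. Estimating the outer integral by $\|g\|_{1}$ times this constant gives the stated bound $4|A_{\alpha}|\,\|f\|_{1}$ with room to spare, as the sharp constant is below $2$. The main obstacle is conceptual rather than computational: one must recognize that oddness of $\mathcal{F}_{\alpha}f$ is exactly what discards the divergent even contribution, and one must orient the change of variables correctly when $\csc\alpha<0$ so that the folded integral again runs over a positive interval.
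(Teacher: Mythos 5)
Your proof is correct, and it is essentially the paper's own argument: the heart of both is that oddness forces the even (cosine) part to vanish, leaving a pure sine-kernel representation, after which the chirp factors cancel, Fubini applies, and the uniform bound on the sine integral $\int_{a}^{b}\frac{\sin w}{w}\,\mathrm{d}w$ finishes the estimate. The only difference is organizational: you pass to the classical Fourier transform of $g=\mathcal{M}_{\alpha}f$ and change variables $u=x\csc\alpha$ (which forces your folding step when $\csc\alpha<0$), whereas the paper antisymmetrizes $(\mathcal{F}_{\alpha}f)(x)-(\mathcal{F}_{\alpha}f)(-x)$ directly and keeps $\csc\alpha$ inside the sine, so the sine-integral bound absorbs the sign of $t\csc\alpha$ with no case analysis.
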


Indeed, since $\mathcal{F}_{\alpha}f$ is odd, we have
\begin{align*}
\left(  \mathcal{F}_{\alpha}f\right)  (x)  &  =\frac{1}{2}\left(  \left(
\mathcal{F}_{\alpha}f\right)  (x)-\left(  \mathcal{F}_{\alpha}f\right)
(-x)\right) \\
&  =\frac{1}{2}\int_{-\infty}^{+\infty}f(t)\left(  K_{\alpha}(x,t)-K_{\alpha
}(-x,t)\right)  \mathrm{d}t\\
&  =\frac{A_{\alpha}}{2}\int_{-\infty}^{+\infty}f(t)e^{\pi i\left(
t^{2}+x^{2}\right)  \cot\alpha}\left(  e^{-2\pi ixt\csc\alpha}-e^{2\pi
ixt\csc\alpha}\right)  \mathrm{d}t\\
&  =-iA_{\alpha}e^{\pi ix^{2}\cot\alpha}\int_{-\infty}^{+\infty}f(t)e^{\pi
it^{2}\cot\alpha}\sin\left(  2\pi xt\csc\alpha\right)  \mathrm{d}t.
\end{align*}
Then
\begin{align*}
\int_{\varepsilon}^{N}\frac{(\mathcal{F}_{\alpha}f)(x)}{x}e^{-\pi ix^{2}%
\cot\alpha}\mathrm{d}x  &  =-iA_{\alpha}\int_{\varepsilon}^{N}\frac{1}%
{x}\left(  \int_{-\infty}^{+\infty}f(t)e^{\pi it^{2}\cot\alpha}\sin\left(
2\pi xt\csc\alpha\right)  \mathrm{d}t\right)  \mathrm{d}x\\
&  =-iA_{\alpha}\int_{-\infty}^{+\infty}f(t)e^{\pi it^{2}\cot\alpha}\left(
\int_{2\pi\varepsilon t\csc\alpha}^{2\pi Nt\csc\alpha}\frac{\sin x}%
{x}\mathrm{d}x\right)  \mathrm{d}t.
\end{align*}
Note that
\[
\left|\int_{2\pi\varepsilon t\csc\alpha}^{2\pi Nt\csc\alpha}\frac{\sin x}%
{x}\mathrm{d}x\right|\leq4,\quad\forall~0<\varepsilon<N<+\infty.
\]
Consequently,
\begin{align*}
\left\vert \int_{\varepsilon}^{N}\frac{(\mathcal{F}_{ \alpha}f)(x)}{x}e^{-\pi
ix^{2}\cot\alpha}\mathrm{d}x\right\vert  &  \leq\left\vert A_{\alpha
}\right\vert \int_{-\infty}^{+\infty}\left\vert f(t)\right\vert \left\vert
\int_{2\pi\varepsilon t\csc\alpha}^{2\pi Nt\csc\alpha}\frac{\sin x}%
{x}\mathrm{d}x\right\vert \mathrm{d}t\\
&  \leq4\left\vert A_{\alpha}\right\vert \left\Vert f\right\Vert _{1}.
\end{align*}
So the claim holds. Since $g\in C_{0}(\mathbb{R})$ is an odd function and
\[
\lim_{\substack{t\rightarrow\infty\\\varepsilon\rightarrow0^{+}}}\left\vert
\int_{\varepsilon}^{t}\frac{g(x)}{x}e^{-\pi ix^{2}\cot\alpha}\mathrm{d}%
x\right\vert =\infty,
\]
the above claim implies that $g$ is not the FRFT\ of any $L^{1}$-function.
\end{example}

We conclude this section with a useful identity.

\begin{theorem}
[Multiplication formula]\label{th:mp}For every $f,g\in L^{1}(\mathbb{R})$
and   $\alpha \in \mathbb R$ we have
\begin{equation}
\int_{-\infty}^{+\infty}(\mathcal{F}_{\alpha}f)(x)g(x)\mathrm{d}%
x=\int_{-\infty}^{+\infty}f(x)(\mathcal{F}_{\alpha}g)(x)\mathrm{d}x.
\label{eq:multipl}%
\end{equation}

\end{theorem}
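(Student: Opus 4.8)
The plan is to write the left-hand side of \eqref{eq:multipl} as a double integral using the integral definition \eqref{eq:fa} of the FRFT, interchange the order of integration by Fubini's theorem, and then recognize the inner integral as $\mathcal{F}_\alpha g$. First I would treat the generic case $\alpha\neq n\pi$, where $\mathcal{F}_\alpha$ is given by the kernel $K_\alpha(x,t)$. The crucial structural observation is that this kernel is \emph{symmetric} in its two arguments, that is, $K_\alpha(x,t)=K_\alpha(t,x)$, because the exponent depends on $x$ and $t$ only through the symmetric combination $\tfrac{t^2}{2}\cot\alpha-xt\csc\alpha+\tfrac{x^2}{2}\cot\alpha$, which is unchanged when $x$ and $t$ are swapped.

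With this symmetry in hand, the left-hand side becomes the double integral $\int_{-\infty}^{+\infty}\int_{-\infty}^{+\infty}K_\alpha(x,t)f(t)g(x)\,\mathrm{d}t\,\mathrm{d}x$. The key step is to apply Fubini's theorem to exchange the order of integration. This hypothesis is trivially met here: since $|K_\alpha(x,t)|=|A_\alpha|$ is a constant (by \eqref{defAa}), the integrand is dominated in absolute value by $|A_\alpha|\,|f(t)|\,|g(x)|$, whose double integral equals $|A_\alpha|\,\|f\|_1\|g\|_1<\infty$. After the interchange I would factor $f(t)$ out of the $x$-integral and use the kernel symmetry $K_\alpha(x,t)=K_\alpha(t,x)$ to identify the remaining inner integral $\int_{-\infty}^{+\infty}K_\alpha(t,x)g(x)\,\mathrm{d}x$ as $(\mathcal{F}_\alpha g)(t)$. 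Relabeling the variable then yields exactly the right-hand side of \eqref{eq:multipl}.

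Finally I would dispose of the degenerate angles $\alpha=n\pi$ separately, since there the FRFT reduces to the identity or the reflection operator by \eqref{eq:fa}. When $\alpha=2n\pi$ both transforms act as the identity and both sides equal $\int_{-\infty}^{+\infty}f(x)g(x)\,\mathrm{d}x$; when $\alpha=(2n+1)\pi$ the left side is $\int_{-\infty}^{+\infty}f(-x)g(x)\,\mathrm{d}x$ and the right side is $\int_{-\infty}^{+\infty}f(x)g(-x)\,\mathrm{d}x$, which agree after the substitution $x\mapsto-x$.

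I do not anticipate a genuine obstacle: the argument rests entirely on the symmetry of $K_\alpha$ and on the fact that the boundedness $|K_\alpha|=|A_\alpha|$ makes the Fubini hypothesis immediate. The only point requiring attention is bookkeeping, namely keeping the special-angle cases $\alpha=n\pi$ cleanly separated from the generic kernel formula.
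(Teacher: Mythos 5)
Your proposal is correct and follows essentially the same route as the paper's proof: write both sides as a double integral, justify the interchange via Fubini using the boundedness $|K_\alpha(x,t)|=|A_\alpha|$, and conclude via the symmetry $K_\alpha(x,t)=K_\alpha(t,x)$. Your explicit treatment of the degenerate angles $\alpha=n\pi$ is a small added courtesy the paper omits, but it does not change the argument.
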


\begin{proof}
The identity (\ref{eq:multipl}) is an immediate consequence of Fubini's
theorem. Indeed,
\begin{align*}
\int_{-\infty}^{+\infty}(\mathcal{F}_{\alpha}f)(x)g(x)\mathrm{d}x  &
=\int_{-\infty}^{+\infty}g(x)\left(  \int_{-\infty}^{+\infty}f(t)K_{\alpha} \left(
x,t\right)  \mathrm{d}t\right)  \mathrm{d}x\\
&  =\int_{-\infty}^{+\infty}f(t)\left(  \int_{-\infty}^{+\infty}%
g(x)K_{\alpha}\left(  x,t\right)  \mathrm{d}x\right)  \mathrm{d}t\\
&  =\int_{-\infty}^{+\infty}f(x)(\mathcal{F}_{\alpha}g)(x)\mathrm{d}x,
\end{align*}
noting that  $K_{\alpha}$  is a bounded function and
$K_{\alpha}\left(  x,t\right)  =K_{\alpha}\left(  t,x\right)  $ for all $x$ and $t$.
\end{proof}

\section{Fractional approximate identities and FRFT inversion on
$L^{1}(\mathbb{R})$}

\label{sect:invs}

In this section, we   study   FRFT inversion. Namely, given the
FRFT of an $L^{1}$-function, how to recover the original function?
We naturally hope that the integral%
\begin{equation}
\int_{-\infty}^{+\infty}(\mathcal{F}_{\alpha}f)(x)K_{-\alpha}(x,t)\mathrm{d}x
\label{eq:in frft}%
\end{equation}
equals $f(t)$. Unfortunately, when $f$ is integrable, one may not necessarily
have that $\mathcal{F}_{\alpha}f$ is integrable, so the integral
(\ref{eq:in frft}) may not make sense.
In fact, $\mathcal{F}_{\frac{\pi}{2}}f$ is  nonintegrable in general (cf.,
\cite[pp. 12]{Duo2001}).

\begin{example}  Let%
\[
f(t)=\left\{
\begin{array}
[c]{cc}%
e^{-\pi\left(  2t+it^{2}\cot\alpha\right)  }, & t\geq0,\\
0, & t<0.
\end{array}
\right.
\]
Then $f\in L^{1}(\mathbb{R})$ but
\[
\left(  \mathcal{F}_{\alpha}f\right)  (x)=\frac{A_{\alpha}e^{\pi ix^{2}%
\cot\alpha}}{2\pi\left(  1+ix\right)  }\notin L^{1}(\mathbb{R}).
\]
\end{example}

To overcome this difficulty, we employ integral summability methods.
We introduce the fractional convolution and
we establish the approximate identities in the fractional setting. Then we
  study the $\Phi_{\alpha}$ means of the fractional Fourier integral,
especially Abel means and Gauss means. Based on the regularity of the
fractional convolution and the results of pointwise convergence, we can
approximate $f$ by the $\Phi_{\alpha}$ means of the integral  \eqref{eq:in frft}.

\subsection{Fractional convolution and approximate identities}

\begin{definition}
\label{def:con}Let $f,g$ be in $L^{1}(\mathbb{R})$. Define the fractional
convolution of order $\alpha$ by%
\[
\left(  f\overset{\alpha}{\ast}g\right)  (x)     =e^{-\pi ix^{2}\cot\alpha
}\int_{-\infty}^{+\infty}e^{\pi it^{2}\cot\alpha}f(t)g(x-t)\mathrm{d}t
   =\mathcal{M}_{-\alpha}\left(  \mathcal{M}_{\alpha}f\ast g\right)  (x).
\]
\end{definition}

We reserve the following notation for the $L^1$ dilation of a function $\phi$
\[
\phi_{\varepsilon}(x):=\frac{1}{\varepsilon}\phi\left(  \frac{x}{\varepsilon
}\right)  ,\quad\forall\varepsilon>0.
\]
The following is a fundamental result concerning fractional convolution and approximate identities.

\begin{theorem}
\label{th:id1}Let $\phi\in L^{1}(\mathbb{R})$ and $\int_{-\infty}^{+\infty
}\phi\left(  x\right)  \mathrm{d}x=1$. If $f\in L^{p}\left(  \mathbb{R}%
\right)  ,1\leq p<\infty$, then%
\[
\lim_{\varepsilon\rightarrow0}\left\Vert \left(  f\overset{\alpha}{\ast}%
\phi_{\varepsilon}\right)  -f\right\Vert _{p}=0.
\]

\end{theorem}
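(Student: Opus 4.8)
The plan is to reduce the fractional statement to the classical approximate-identity theorem by exploiting the definition of the fractional convolution. Recall that by Definition~\ref{def:con} we have $\left(f\overset{\alpha}{\ast}\phi_{\varepsilon}\right)(x)=\mathcal{M}_{-\alpha}\left(\mathcal{M}_{\alpha}f\ast\phi_{\varepsilon}\right)(x)$, where $\mathcal{M}_{\alpha}g(x)=e^{\pi ix^{2}\cot\alpha}g(x)$ is the chirp multiplier. The crucial observation is that $\mathcal{M}_{-\alpha}=\mathcal{M}_{\alpha}^{-1}$ and that each chirp has modulus one, so $\mathcal{M}_{\alpha}$ is an isometry on every $L^{p}(\mathbb{R})$. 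Writing $f=\mathcal{M}_{-\alpha}(\mathcal{M}_{\alpha}f)$ as well, I would estimate
\begin{align*}
\left\Vert \left(f\overset{\alpha}{\ast}\phi_{\varepsilon}\right)-f\right\Vert_{p}
&=\left\Vert \mathcal{M}_{-\alpha}\left(\mathcal{M}_{\alpha}f\ast\phi_{\varepsilon}-\mathcal{M}_{\alpha}f\right)\right\Vert_{p}\\
&=\left\Vert \left(\mathcal{M}_{\alpha}f\right)\ast\phi_{\varepsilon}-\mathcal{M}_{\alpha}f\right\Vert_{p},
\end{align*}
where the last equality uses that $|e^{-\pi ix^{2}\cot\alpha}|=1$ pointwise, so $\mathcal{M}_{-\alpha}$ preserves the $L^{p}$ norm.

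With this identity in hand, the problem is transported entirely to the classical setting: the right-hand side is the ordinary approximate-identity error for the function $\mathcal{M}_{\alpha}f$. First I would check that $\mathcal{M}_{\alpha}f\in L^{p}(\mathbb{R})$, which is immediate since $\|\mathcal{M}_{\alpha}f\|_{p}=\|f\|_{p}$ because the chirp has unit modulus. Then I would invoke the classical theorem that for $g\in L^{p}(\mathbb{R})$, $1\le p<\infty$, and an $L^{1}$ kernel $\phi$ with $\int\phi=1$, one has $\lim_{\varepsilon\to0}\|g\ast\phi_{\varepsilon}-g\|_{p}=0$. Applying this with $g=\mathcal{M}_{\alpha}f$ gives exactly that the displayed quantity tends to zero, completing the proof. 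One should also handle the degenerate cases $\alpha=n\pi$ separately, but there $\cot\alpha$ is undefined and the chirp factor degenerates; the natural reading is to restrict to $\alpha\neq n\pi$ (where the kernel and $\mathcal{M}_{\alpha}$ are genuinely defined), and this is consistent with the surrounding development.

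The only genuine subtlety, and the step I would single out as the main obstacle, is justifying that $\mathcal{M}_{-\alpha}$ commutes with the norm computation as cleanly as written and that the fractional convolution of two $L^{1}$ functions (and the pairing with an $L^{p}$ function) is well defined in the first place. Since $|\mathcal{M}_{\pm\alpha}|\equiv 1$, the isometry property on $L^{p}$ is genuinely trivial, so there is no hidden analytic difficulty there; the real content is organizational, namely recognizing that the fractional convolution factors through the classical convolution after conjugation by the chirp. Once this algebraic reduction is made explicit, everything else reduces to citing the standard result, and no delicate estimate of the oscillatory chirp factor is needed. I would therefore present the argument as a short conjugation identity followed by an appeal to the classical approximate-identity theorem, emphasizing that the unit-modulus property of $\mathcal{M}_{\alpha}$ is what makes the $L^{p}$ norms transfer without loss.
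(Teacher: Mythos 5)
Your proof is correct, and it takes a genuinely different (and shorter) route than the paper's. You conjugate by the unimodular chirps at the outset: from Definition~\ref{def:con}, $\left(f\overset{\alpha}{\ast}\phi_{\varepsilon}\right)-f=\mathcal{M}_{-\alpha}\left(\mathcal{M}_{\alpha}f\ast\phi_{\varepsilon}-\mathcal{M}_{\alpha}f\right)$, so $\left\Vert \left(f\overset{\alpha}{\ast}\phi_{\varepsilon}\right)-f\right\Vert _{p}=\left\Vert \mathcal{M}_{\alpha}f\ast\phi_{\varepsilon}-\mathcal{M}_{\alpha}f\right\Vert _{p}$, and the classical approximate-identity theorem applied to $\mathcal{M}_{\alpha}f\in L^{p}(\mathbb{R})$ finishes the argument; Young's inequality settles well-definedness. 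The paper instead reproves the classical result from scratch with the chirp carried through: it writes the difference as $\int_{-\infty}^{+\infty}\left(e^{\pi i\left(\left(x-t\right)^{2}-x^{2}\right)\cot\alpha}f(x-t)-f(x)\right)\phi_{\varepsilon}(t)\,\mathrm{d}t$, applies Minkowski's integral inequality, approximates $f$ by $g\in C_{c}(\mathbb{R})$, and uses dominated convergence twice. Note that the paper's inner quantity $J_{\varepsilon}$ is exactly $\left\Vert (\mathcal{M}_{\alpha}f)(\cdot-\varepsilon t)-\mathcal{M}_{\alpha}f\right\Vert _{p}$ in disguise, i.e.\ continuity of translation in $L^{p}$ for the chirped function, which is precisely the content of the classical theorem you cite. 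Your reduction buys brevity and also safety: you never need to estimate the term the paper writes as $\left\Vert g\right\Vert _{\infty}\left\Vert e^{\pi i\left(\left(\left(\cdot\right)-\varepsilon t\right)^{2}-\left(\cdot\right)^{2}\right)\cot\alpha}-1\right\Vert _{p}$, which as literally written is infinite (the exponent is linear in $x$, so the chirp difference does not decay) and only becomes small after restricting to the compact support of $g(\cdot-\varepsilon t)$ — a repair your argument renders unnecessary. What the paper's self-contained approach buys is independence from the classical theorem as a black box, and its template (Minkowski, density, dominated convergence with the chirp factor) parallels the structure then used for the pointwise result, Theorem~\ref{th:id2}. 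Your handling of the degenerate case $\alpha=n\pi$ is consistent with the paper's standing convention.
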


\begin{proof}
Note that
\begin{align*}
\left(  f\overset{\alpha}{\ast}\phi_{\varepsilon}\right)  (x)-f(x)  &
=e^{-\pi ix^{2}\cot\alpha}\int_{-\infty}^{+\infty}e^{\pi it^{2}\cot\alpha
}f(t)\phi_{\varepsilon}(x-t)\mathrm{d}t-\int_{-\infty}^{+\infty}%
\phi_{\varepsilon}\left(  t\right)  f\left(  x\right)  \mathrm{d}t\\
&  =\int_{-\infty}^{+\infty}\left(  e^{\pi i\left(  \left(  x-t\right)
^{2}-x^{2}\right)  \cot\alpha}f(x-t)-f\left(  x\right)  \right)
\phi_{\varepsilon}(t)\mathrm{d}t.
\end{align*}
By Minkowski's integral inequality, we obtain%
\begin{align*}
\left\Vert \left(  f\overset{\alpha}{\ast}\phi_{\varepsilon}\right)
-f\right\Vert _{p}  &  =\left(  \int_{-\infty}^{+\infty}\left\vert
\int_{-\infty}^{+\infty}\left(  e^{\pi i\left(  \left(  x-t\right)  ^{2}%
-x^{2}\right)  \cot\alpha}f(x-t)-f\left(  x\right)  \right)  \phi
_{\varepsilon}(t)\mathrm{d}t\right\vert ^{p}\mathrm{d}x\right)  ^{\frac{1}{p}%
}\\
&  \leq\int_{-\infty}^{+\infty}\left(  \int_{-\infty}^{+\infty}\left\vert
e^{\pi i\left(  \left(  x-t\right)  ^{2}-x^{2}\right)  \cot\alpha
}f(x-t)-f\left(  x\right)  \right\vert ^{p}\mathrm{d}x\right)  ^{\frac{1}{p}%
}\left\vert \phi_{\varepsilon}(t)\right\vert \mathrm{d}t\\
&  =\int_{-\infty}^{+\infty}\left(  \int_{-\infty}^{+\infty}\left\vert e^{\pi
i\left(  \left(  x-\varepsilon t\right)  ^{2}-x^{2}\right)  \cot\alpha
}f(x-\varepsilon t)-f\left(  x\right)  \right\vert ^{p}\mathrm{d}x\right)
^{\frac{1}{p}}\left\vert \phi(t)\right\vert \mathrm{d}t.
\end{align*}

We first prove that
\begin{equation}
J_{\varepsilon}:=\left(  \int_{-\infty}^{+\infty}\left\vert e^{\pi i\left(
\left(  x-\varepsilon t\right)  ^{2}-x^{2}\right)  \cot\alpha}f(x-\varepsilon
t)-f\left(  x\right)  \right\vert ^{p}\mathrm{d}x\right)  ^{\frac{1}{p}%
}\rightarrow0 \label{eq:clm}%
\end{equation}
as $\varepsilon\rightarrow0$.

\bigbreak In fact, for an arbitrary $\eta>0$, since the space of
continuous functions with compact support $C_{c}\left(  \mathbb{R}%
\right)  $ is dense in $L^{p}(\mathbb{R})$, there exists $g\in C_{c}\left(
\mathbb{R}\right)  $ such that%
\[
\left\Vert f-g\right\Vert _{p}<\frac{\eta}{2}.
\]
Since $g$ is uniformly continuous,%
\[
\lim_{\varepsilon\rightarrow0}\left\vert g\left(  x-\varepsilon t\right)
-g\left(  x\right)  \right\vert =0.
\]
Note that%
\begin{align*}
\left\vert J_{\varepsilon}\right\vert \leq &  \left\Vert e^{\pi i\left(
\left(  \left(  \cdot\right)  -\varepsilon t\right)  ^{2}-\left(
\cdot\right)  ^{2}\right)  \cot\alpha}f(\left(  \cdot\right)  -\varepsilon
t)-e^{\pi i\left(  \left(  \left(  \cdot\right)  -\varepsilon t\right)
^{2}-\left(  \cdot\right)  ^{2}\right)  \cot\alpha}g(\left(  \cdot\right)
-\varepsilon t)\right\Vert _{p}\\
&  +\left\Vert e^{\pi i\left(  \left(  \left(  \cdot\right)  -\varepsilon
t\right)  ^{2}-\left(  \cdot\right)  ^{2}\right)  \cot\alpha}g(\left(
\cdot\right)  -\varepsilon t)-g(\left(  \cdot\right)  -\varepsilon
t)\right\Vert _{p}\\
&  +\left\Vert g(\left(  \cdot\right)  -\varepsilon t)-g\right\Vert
_{p}+\left\Vert f-g\right\Vert _{p}\\
=  &  2\left\Vert f-g\right\Vert _{p}+\left\Vert g\right\Vert _{\infty
}\left\Vert e^{\pi i\left(  \left(  \left(  \cdot\right)  -\varepsilon
t\right)  ^{2}-\left(  \cdot\right)  ^{2}\right)  \cot\alpha}-1\right\Vert
_{p}+\left\Vert g(\left(  \cdot\right)  -\varepsilon t)-g\right\Vert _{p}. %
\end{align*}
Consequently, it follows from Lebesgue's dominated convergence theorem that
\begin{align*}
\varlimsup_{\varepsilon\rightarrow0}\left\vert J_{\varepsilon}\right\vert \leq
&  \eta+\varlimsup_{\varepsilon\rightarrow0}\left\Vert g(\left(
\cdot\right)  -\varepsilon t)-g\right\Vert _{p}\\
&  +\left\Vert g\right\Vert _{\infty}\varlimsup_{\varepsilon\rightarrow
0}\left\Vert e^{\pi i\left(  \left(  \left(  \cdot\right)  -\varepsilon
t\right)  ^{2}-\left(  \cdot\right)  ^{2}\right)  \cot\alpha}-1\right\Vert
_{p}=\eta.
\end{align*}
Therefore (\ref{eq:clm}) holds. In view  of
\[
\left(  \int_{-\infty}^{+\infty}\left\vert e^{\pi i\left(  \left(
x-\varepsilon t\right)  ^{2}-x^{2}\right)  \cot\alpha}f(x-\varepsilon
t)-f\left(  x\right)  \right\vert ^{p}\mathrm{d}x\right)  ^{\frac{1}{p}}%
\leq2\left\Vert f\right\Vert _{p}<\infty,
\]
and using Lebesgue's dominated convergence theorem again, we deduce that%
\[
\lim_{\varepsilon\rightarrow0}\left\Vert \left(  f\overset{\alpha}{\ast}%
\phi_{\varepsilon}\right)  -f\right\Vert _{p}=0.
\]

\end{proof}

Next, we discuss the pointwise convergence of approximate identities with
respect to fractional convolution.

\begin{theorem}
\label{th:id2}Let $\phi\in L^{1}(\mathbb{R})$ and $\int_{-\infty}^{+\infty
}\phi\left(  x\right)  \mathrm{d}x=1$. Denote the decreasing radial dominant
functions of $\phi$ by $\psi\left(  x\right)  =\underset{\left\vert
t\right\vert \geq\left\vert x\right\vert }{\sup}\left\vert \phi\left(
t\right)  \right\vert $. If $\psi\in L^{1}(\mathbb{R})$ and $f\in
L^{p}(\mathbb{R}),1\leq p<\infty$, then%
\[
\lim_{\varepsilon\rightarrow0}\left(  f\overset{\alpha}{\ast}\phi
_{\varepsilon}\right)  \left(  x\right)  =f\left(  x\right)  ,\quad
\mathrm{a.e.}\quad x\in\mathbb{R}.
\]

\end{theorem}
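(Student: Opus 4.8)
The plan is to reduce the assertion to the classical pointwise convergence theorem for approximate identities by peeling off the chirp factor. By Definition~\ref{def:con},
\[
\left(f\overset{\alpha}{\ast}\phi_{\varepsilon}\right)(x)=\mathcal{M}_{-\alpha}\left(\mathcal{M}_{\alpha}f\ast\phi_{\varepsilon}\right)(x)=e^{-\pi ix^{2}\cot\alpha}\left(\mathcal{M}_{\alpha}f\ast\phi_{\varepsilon}\right)(x),
\]
where $\ast$ denotes ordinary convolution. Setting $h:=\mathcal{M}_{\alpha}f$, we have $|h(x)|=|f(x)|$, so $h\in L^{p}(\mathbb{R})$. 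Because the unimodular factor $e^{-\pi ix^{2}\cot\alpha}$ is independent of $\varepsilon$, the claimed a.e.\ convergence of $(f\overset{\alpha}{\ast}\phi_{\varepsilon})(x)$ to $f(x)$ is equivalent to a.e.\ convergence of $(h\ast\phi_{\varepsilon})(x)$ to $h(x)=e^{\pi ix^{2}\cot\alpha}f(x)$. Thus it suffices to prove the classical statement: if $h\in L^{p}(\mathbb{R})$ and $\phi$ satisfies the hypotheses, then $h\ast\phi_{\varepsilon}\to h$ almost everywhere.

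For this classical statement I would run the standard maximal-function argument, which is exactly where the hypothesis $\psi\in L^{1}(\mathbb{R})$ enters. First one establishes the pointwise domination
\[
\sup_{\varepsilon>0}\left|\left(h\ast\phi_{\varepsilon}\right)(x)\right|\leq\|\psi\|_{1}\,Mh(x),
\]
where $M$ is the Hardy--Littlewood maximal operator; this is obtained by majorizing $|\phi_{\varepsilon}|$ by $(\psi)_{\varepsilon}$ and approximating the decreasing radial function $\psi$ from below by nonnegative combinations of normalized indicators of intervals centered at the origin, each of whose averages against $|h|$ is bounded by $Mh$. By the Hardy--Littlewood maximal theorem, the associated maximal operator $h\mapsto\sup_{\varepsilon}|h\ast\phi_{\varepsilon}|$ is then of weak type $(1,1)$ and bounded on $L^{p}$ for $1<p<\infty$.

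Next I would verify convergence on the dense class $C_{c}(\mathbb{R})\subset L^{p}(\mathbb{R})$: for $g\in C_{c}(\mathbb{R})$ the normalization $\int_{\mathbb{R}}\phi=1$ together with uniform continuity of $g$ gives $(g\ast\phi_{\varepsilon})(x)\to g(x)$ at every $x$. The passage from the dense class to all of $L^{p}$ is the usual argument: writing $h=g+(h-g)$ with $\|h-g\|_{p}$ arbitrarily small, the oscillation $\limsup_{\varepsilon\to0}|(h\ast\phi_{\varepsilon})(x)-h(x)|$ is controlled pointwise by $\|\psi\|_{1}\,M(h-g)(x)+\left|(h-g)(x)\right|$, and the weak-type bound on $M$ forces the set where this limsup exceeds any $\lambda>0$ to have measure zero. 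Undoing the reduction through $\mathcal{M}_{-\alpha}$ then yields the theorem.

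The crux of the proof is the pointwise majorization of the approximate-identity maximal function by $\|\psi\|_{1}\,Mh$; this is the only place the integrability of the radial decreasing majorant $\psi$ is genuinely needed, and it is what makes $\phi_{\varepsilon}$ behave like a well-controlled approximate identity. Everything around it---the reduction via the chirp operator, which is harmless since $|\mathcal{M}_{\pm\alpha}|\equiv1$, and the density together with the weak-type argument---is insensitive to the fractional parameter $\alpha$. In particular, once the classical result is secured, $\alpha$ never reappears in an essential way.
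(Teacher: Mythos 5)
Your proposal is correct, but its analytic core runs along a genuinely different track from the paper's proof. Both arguments share the same opening move: since the chirp factors are unimodular, the claim is equivalent to a.e.\ convergence of the ordinary convolution $h\ast\phi_{\varepsilon}\to h$ for $h=\mathcal{M}_{\alpha}f\in L^{p}(\mathbb{R})$; in the paper this reduction is implicit, because the Lebesgue differentiation theorem is applied to $\mathcal{M}_{\alpha}f$ and the set $\Omega$ there is exactly the set of Lebesgue points of $\mathcal{M}_{\alpha}f$. The difference lies in how the classical statement is then established. You invoke the maximal-function machinery: the pointwise domination $\sup_{\varepsilon>0}|h\ast\phi_{\varepsilon}(x)|\leq\|\psi\|_{1}\,Mh(x)$ obtained by layering $\psi$ into centered intervals, the Hardy--Littlewood weak type $(1,1)$ and $L^{p}$ bounds, everywhere convergence on the dense class $C_{c}(\mathbb{R})$, and the standard $\limsup$/weak-type argument to pass to all of $L^{p}$. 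The paper never introduces a maximal operator: it works directly at each point of $\Omega$, splits $\left(f\overset{\alpha}{\ast}\phi_{\varepsilon}\right)(x)-f(x)$ into the regions $|t|\leq\eta$ and $|t|\geq\eta$, controls the near part by integration by parts against the decreasing majorant $\psi$ (using $t\psi(t)\to0$ and $\int_{0}^{\infty}\psi<\infty$), and kills the far part with H\"{o}lder's inequality. Each route buys something: yours is modular (once the classical theorem is cited, $\alpha$ disappears, as you note) and yields as a by-product the maximal inequality $\sup_{\varepsilon>0}\bigl|\bigl(f\overset{\alpha}{\ast}\phi_{\varepsilon}\bigr)(x)\bigr|\leq\|\psi\|_{1}\,M(\mathcal{M}_{\alpha}f)(x)$, hence weak-type control of the fractional maximal convolution, which is of independent interest; the paper's route is self-contained, avoids weak-type estimates altogether, and identifies the convergence set explicitly as the Lebesgue points of $\mathcal{M}_{\alpha}f$ rather than an unspecified set of full measure.
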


\begin{proof}
Since $\psi$ is decreasing and nonnegative, we have
\[
\left\vert x\psi(x)\right\vert \leq2\left\vert \int_{x/2}^{x}\psi
(s)\mathrm{d}s\right\vert \rightarrow0
\]
as $x\rightarrow0$ or $x\rightarrow\infty$. Moreover, there is a constant
$A>0$ such that%
\[
\left\vert x\psi(x)\right\vert \leq A,\quad\forall x\in\mathbb{R}.
\]
As $\mathcal{M}_{\alpha}f\in L^{p}(\mathbb{R})$, it follows from
Lebesgue's differentiation theorem that, for almost all $x\in\mathbb{R}$ we have %
\[
\lim_{r\rightarrow0}\frac{1}{r}\int_{-r}^{r}\left\vert e^{\pi i\left(
x-t\right)  ^{2}\cot\alpha}f\left(  x-t\right)  -e^{\pi ix^{2}\cot\alpha
}f\left(  x\right)  \right\vert dt=0.
\]
Let%
\[
\Omega=\left\{  x:\lim_{r\rightarrow0}\frac{1}{r}\int_{-r}^{r}\left\vert
e^{\pi i\left(  \left(  x-t\right)  ^{2}-x^{2}\right)  \cot\alpha}f\left(
x-t\right)  -f\left(  x\right)  \right\vert dt=0\right\}  ,
\]
and%
\[
G_{x}(t):=\int_{0}^{t}\left\vert e^{\pi i\left(  \left(  x-\tau\right)
^{2}-x^{2}\right)  \cot\alpha}f\left(  x-\tau\right)  -f\left(  x\right)
\right\vert d\tau.
\]
Given $x\in\Omega$ and $\delta>0$, there exists $\eta>0$ such that%
\[
\left\vert \frac{1}{t}G_{x}(t)\right\vert <\delta
\]
whenever $0<\left\vert t\right\vert \leq\eta$. Consider%
\begin{align*}
\left(  f\overset{\alpha}{\ast}\phi_{\varepsilon}\right)  \left(  x\right)
-f\left(  x\right)   &  =\int_{-\infty}^{+\infty}\left(  e^{\pi i\left(
\left(  x-t\right)  ^{2}-x^{2}\right)  \cot\alpha}f(x-t)-f\left(  x\right)
\right)  \phi_{\varepsilon}(t)\mathrm{d}t\\
&  =\left(  \int_{\left\vert t\right\vert \leq\eta}+\int_{\left\vert
t\right\vert \geq\eta}\right)  \left(  e^{\pi i\left(  \left(  x-t\right)
^{2}-x^{2}\right)  \cot\alpha}f(x-t)-f\left(  x\right)  \right)
\phi_{\varepsilon}(t)\mathrm{d}t\\
&  =:I_{1}+I_{2}.
\end{align*}
For $I_{1}$ an integration by parts yields%
\begin{align*}
I_{1}  &  \leq\int_{-\eta}^{\eta}\left\vert e^{\pi i\left(  \left(
x-t\right)  ^{2}-x^{2}\right)  \cot\alpha}f(x-t)-f\left(  x\right)
\right\vert \left\vert \frac{1}{\varepsilon}\phi \Big(\frac{t}{\varepsilon
}\Big)\right\vert \, \mathrm{d}t\\
&  \leq\int_{-\eta}^{\eta}\left\vert e^{\pi i\left(  \left(  x-t\right)
^{2}-x^{2}\right)  \cot\alpha}f(x-t)-f\left(  x\right)  \right\vert \frac
{1}{\varepsilon}\psi\Big(\frac{t}{\varepsilon}\Big)\mathrm{d}t\\
&  =\left.  \frac{1}{t}G(t)\frac{t}{\varepsilon}\psi\Big(\frac{t}{\varepsilon
}\Big)\right\vert _{-\eta}^{\eta}-\int_{-\eta/\varepsilon}^{\eta/\varepsilon}%
\frac{1}{\varepsilon}G(\varepsilon s)\mathrm{d}\psi(s)\\
&  \leq A\delta-\int_{-\eta/\varepsilon}^{\eta/\varepsilon}\frac
{1}{\varepsilon s}G(\varepsilon s)s\mathrm{d}\psi(s)\\
&  \leq A\delta+2\delta\int_{0}^{+\infty}s\mathrm{d}\psi(s)\\
&  \leq A\delta+\left.  2\delta s\psi(s)\right\vert _{0}^{+\infty}+2\delta
\int_{0}^{\infty}\psi(s)\mathrm{d}s\\
&  =A\left(  \delta+2\int_{0}^{\infty}\psi(s)\mathrm{d}s\right)  =:\delta
A_{1}.
\end{align*}
Here, we used that fact that $\psi(x)\geq\left\vert \phi(x)\right\vert $ and
$x\psi(x)\rightarrow0$ as $x\rightarrow0$ or $x\rightarrow\infty$

On the other hand, it follows from H\"{o}lder's inequality that
\begin{align*}
I_{2}  &  \leq\int_{\left\vert t\right\vert \geq\eta}\left\vert e^{\pi
i\left(  \left(  x-t\right)  ^{2}-x^{2}\right)  \cot\alpha}f(x-t)-f\left(
x\right)  \right\vert \left\vert \psi_{\varepsilon}(t)\right\vert
\mathrm{d}t\\
&  \leq\int_{\left\vert t\right\vert \geq\eta}\left\vert f(x-t)\psi
_{\varepsilon}(t)\right\vert \mathrm{d}t-\left\vert f\left(  x\right)
\right\vert \int_{\left\vert t\right\vert \geq\eta}\psi_{\varepsilon
}(t)\mathrm{d}t\\
&  \leq\left\Vert f\right\Vert _{p}\left\Vert \chi_{\eta}\psi_{\varepsilon
}\right\Vert _{p^{\prime}}+\left\vert f\left(  x\right)  \right\vert
\int_{\left\vert t\right\vert \geq\frac{\eta}{\varepsilon}}\psi(t)\mathrm{d}%
t   \rightarrow0.
\end{align*}
as $\varepsilon\rightarrow0$,
where $\chi_{\eta}$ is the characteristic function of the set $\left\{
   x:\,\,   \left\vert x\right\vert \geq\eta\right\}  $. As
 $\delta$ is arbitrary, the theorem is proved.
\end{proof}

\subsection{Fractional Fourier integral   means}

\begin{definition}
Given $\Phi\in C_{0}(\mathbb{R})$ and $\Phi(0)=1$, a function $f$, and
$\varepsilon>0$ we define
\[
M_{\varepsilon,\Phi_{\alpha}}(f):=\int_{-\infty}^{+\infty}(\mathcal{F}_{\alpha}f )(x) K_{-\alpha
}(x,\cdot)\Phi_{\alpha}(\varepsilon x)\mathrm{d}x ,
\]
where
\[
\Phi_{\alpha}\left(  x\right)  :=\Phi\left(  x\csc\alpha\right)   .
\]
The expressions $M_{\varepsilon,\Phi_{\alpha}}(f)$ (with varying $\varepsilon$) are
 called the $\Phi_{\alpha}$ means of the fractional Fourier integral of $f$.
\end{definition}

\begin{theorem}\label{th:phi}
Let $f,\Phi\in L^{1}(\mathbb{R})$. Then for any $\varepsilon>0$ and $t\in \mathbb R$
we have
\begin{align*}
M_{\varepsilon,\Phi_{\alpha}}(f)= f\overset{\alpha}{\ast}\tilde{\varphi}_{\varepsilon},
\end{align*}
where $\varphi:=\mathcal{F}\Phi$ and $\tilde{\varphi}\left(  x\right)  =\varphi\left(
-x\right) $
\end{theorem}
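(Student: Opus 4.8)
The plan is to unfold $M_{\varepsilon,\Phi_\alpha}(f)(t)$ into a double integral and then integrate in $x$ first. Substituting the integral definition \eqref{eq:fa} of $\mathcal{F}_\alpha f$ gives
\[
M_{\varepsilon,\Phi_\alpha}(f)(t)=\int_{-\infty}^{+\infty}\!\!\int_{-\infty}^{+\infty}K_\alpha(x,s)f(s)\,K_{-\alpha}(x,t)\,\Phi(\varepsilon x\csc\alpha)\,\mathrm{d}s\,\mathrm{d}x .
\]
I would first observe that Fubini's theorem applies: since $|K_\alpha(x,s)|=|A_\alpha|$ and $|K_{-\alpha}(x,t)|=|A_{-\alpha}|$ are constant, the modulus of the integrand is bounded by $|A_\alpha||A_{-\alpha}|\,|f(s)|\,|\Phi(\varepsilon x\csc\alpha)|$, whose double integral equals $|A_\alpha||A_{-\alpha}|\,\varepsilon^{-1}|\sin\alpha|\,\|f\|_1\|\Phi\|_1<\infty$ because $f,\Phi\in L^1(\mathbb{R})$. (Throughout I take $\alpha\neq n\pi$, the excluded cases being the trivial identity/reflection.)

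After exchanging the order of integration, the task reduces to the inner integral $\int_{-\infty}^{+\infty}K_\alpha(x,s)\,K_{-\alpha}(x,t)\,\Phi(\varepsilon x\csc\alpha)\,\mathrm{d}x$. The essential simplification is that the chirp factors $e^{\pi i x^2\cot\alpha}$ in $K_\alpha(x,s)$ and $e^{-\pi i x^2\cot\alpha}$ in $K_{-\alpha}(x,t)$ cancel, so that $K_\alpha(x,s)K_{-\alpha}(x,t)=A_\alpha A_{-\alpha}\,e^{\pi i(s^2-t^2)\cot\alpha}\,e^{2\pi i x(t-s)\csc\alpha}$, leaving a genuine Fourier integral in $x$. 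Performing the change of variables $u=x\csc\alpha$ — which produces a factor $|\sin\alpha|$ regardless of the sign of $\csc\alpha$, since the reversal of the limits when $\csc\alpha<0$ is compensated by $\mathrm{d}x=\sin\alpha\,\mathrm{d}u$ — turns the $x$-integral into $|\sin\alpha|\int e^{2\pi i u(t-s)}\Phi(\varepsilon u)\,\mathrm{d}u=|\sin\alpha|\,\varepsilon^{-1}\varphi\big((s-t)/\varepsilon\big)$, where $\varphi=\mathcal{F}\Phi$.

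At this point I would invoke the normalization built into \eqref{defAa}: since $A_\alpha A_{-\alpha}=\sqrt{(1-i\cot\alpha)(1+i\cot\alpha)}=|\csc\alpha|$, the constant collapses to $A_\alpha A_{-\alpha}|\sin\alpha|=1$. Hence the inner integral equals $e^{\pi i(s^2-t^2)\cot\alpha}\,\varepsilon^{-1}\varphi\big((s-t)/\varepsilon\big)=e^{\pi i(s^2-t^2)\cot\alpha}\,\tilde\varphi_\varepsilon(t-s)$, using $\tilde\varphi(x)=\varphi(-x)$. Substituting back and pulling out $e^{-\pi it^2\cot\alpha}$ yields
\[
M_{\varepsilon,\Phi_\alpha}(f)(t)=e^{-\pi it^2\cot\alpha}\int_{-\infty}^{+\infty}e^{\pi is^2\cot\alpha}f(s)\,\tilde\varphi_\varepsilon(t-s)\,\mathrm{d}s ,
\]
which is precisely $\mathcal{M}_{-\alpha}(\mathcal{M}_\alpha f\ast\tilde\varphi_\varepsilon)(t)=(f\overset{\alpha}{\ast}\tilde\varphi_\varepsilon)(t)$ by Definition \ref{def:con}, completing the proof.

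I expect the only real obstacle to be the constant and sign bookkeeping: checking that the $x^2$-chirps cancel exactly, that the substitution $u=x\csc\alpha$ contributes $|\sin\alpha|$ for both signs of $\csc\alpha$, and that these combine with $A_\alpha A_{-\alpha}=|\csc\alpha|$ to give the clean factor $1$. Everything else is a routine application of Fubini's theorem and the definition of the Fourier transform; the integrability of $(\mathcal{F}_\alpha f)\,\Phi_\alpha(\varepsilon\,\cdot)$ needed for $M_{\varepsilon,\Phi_\alpha}(f)$ to be well defined follows from $\mathcal{F}_\alpha f\in L^\infty$ (Proposition \ref{th:cont}) together with $\Phi_\alpha(\varepsilon\,\cdot)\in L^1$.
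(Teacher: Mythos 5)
Your proof is correct and is in essence the paper's own argument: the paper likewise interchanges the order of integration (packaged as the multiplication formula, which is itself proved by Fubini) and uses exactly the same ingredients — cancelation of the $e^{\pm i\pi x^{2}\cot\alpha}$ chirps, the normalization $A_{\alpha}A_{-\alpha}\left\vert \sin\alpha\right\vert =1$ hidden in the change of variables $x\csc\alpha\mapsto x$, and the identity $\mathcal{F}\left[ e^{2\pi it(\cdot)}\Phi\left( \varepsilon(\cdot)\right) \right] (x)=\varphi_{\varepsilon}(x-t)$. The only difference is bookkeeping: the paper routes the computation through the operator decomposition \eqref{eq:def} and the classical multiplication formula, whereas you expand the kernels and apply Fubini directly, with the minor added benefit that you explicitly verify the integrability justifying the interchange.
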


\begin{proof}
Taking advantage of the multiplication formular (\ref{eq:multipl}), we write%
\begin{align*}
M_{\varepsilon,\Phi_{\alpha}}\left(f \right)(t) & =\int_{-\infty}^{+\infty}\left(  \mathcal{F}_{\alpha}f\right)
(x)K_{-\alpha}(x,t)\Phi_{\alpha}\left(  \varepsilon x\right)  \mathrm{d}x\\
&  =A_{-\alpha}e^{-i\pi t^{2}\cot\alpha}\int_{-\infty}^{+\infty}\left(
\mathcal{F}_{\alpha}f\right)  (x)e^{-i\pi x^{2}\cot\alpha}e^{2\pi itx\csc\alpha
}\Phi_{\alpha}\left(  \varepsilon x\right)  \mathrm{d}x\\
&  =A_{-\alpha}A_{\alpha}e^{-i\pi t^{2}\cot\alpha}\!\!\!\int_{-\infty}^{+\infty
}\!\!\!\mathcal{F}\left[  e^{i\pi t^{2}\cot\alpha}f\left(  t\right)  \right]
(x\csc\alpha)e^{2\pi itx\csc\alpha}\Phi\left(  \varepsilon x\csc\alpha\right)
\mathrm{d}x\\
&  =e^{-i\pi t^{2}\cot\alpha}\int_{-\infty}^{+\infty}\mathcal{F}\left[
e^{i\pi t^{2}\cot\alpha}f\left(  t\right)  \right]  (x)e^{2\pi itx}\Phi\left(
\varepsilon x\right)  \mathrm{d}x\\
&  =e^{-i\pi t^{2}\cot\alpha}\int_{-\infty}^{+\infty}e^{i\pi x^{2}\cot\alpha
}f\left(  x\right)  \mathcal{F}\left[  e^{2\pi it(\cdot)}\Phi\left(
\varepsilon(  \cdot)  \right)  \right]  \left(  x\right)
\mathrm{d}x\\
&  =e^{-i\pi t^{2}\cot\alpha}\int_{-\infty}^{+\infty}e^{i\pi x^{2}\cot\alpha
}f\left(  x\right)  \varphi_{\varepsilon}\left(  x-t\right)  \mathrm{d}x\\
&  =\left(  f\overset{\alpha}{\ast}\tilde{\varphi}_{\varepsilon}\right)  (t).
\end{align*}
The desired result is proved.
\end{proof}

In the sequel we will make use of the following well-known results.

\begin{proposition}
[\cite{StW}]\label{pro:poisson}Let $\varepsilon>0$. Then

\begin{enumerate}
\item[(a)] $\mathcal{F}\left[  e^{-2\pi\varepsilon\left\vert\, \cdot\,\right\vert
}\right]  \left(  x\right)  =\frac{1}{\pi}\frac{\varepsilon}{\varepsilon
^{2}+x^{2}}=:P_{\varepsilon}\left(  x\right)  $\quad(Poisson kernel);

\item[(b)] $\mathcal{F}\left[  e^{-4\pi^{2}\varepsilon\left| \, \cdot\,\right|
^{2}}\right]  \left(  x\right)  =\frac{1}{\left(  4\pi\varepsilon\right)
^{1/2}}e^{-x^{2}/4\varepsilon}=:W_\varepsilon\left(  x\right)  $%
\quad(Weierstrass kernel).
\end{enumerate}
\end{proposition}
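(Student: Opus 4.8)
The plan is to compute both Fourier transforms directly from the definition (\ref{ft}), handling the two kernels separately since they call for different techniques.

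For part (a) I would split the defining integral over the two half-lines. Because $\varepsilon>0$ guarantees absolute convergence, one gets
\[
\mathcal{F}\big[e^{-2\pi\varepsilon|\cdot|}\big](x)=\int_{0}^{+\infty}e^{-2\pi(\varepsilon+ix)t}\,\mathrm{d}t+\int_{-\infty}^{0}e^{2\pi(\varepsilon-ix)t}\,\mathrm{d}t=\frac{1}{2\pi(\varepsilon+ix)}+\frac{1}{2\pi(\varepsilon-ix)}.
\]
Putting the two elementary exponential integrals over a common denominator collapses the imaginary parts and leaves $\frac{1}{2\pi}\cdot\frac{2\varepsilon}{\varepsilon^{2}+x^{2}}$, which is exactly $P_{\varepsilon}(x)$. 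This part is entirely routine.

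For part (b) I would complete the square in the exponent. Writing $a=4\pi^{2}\varepsilon$, the exponent $-at^{2}-2\pi ixt$ equals $-a\big(t+\tfrac{\pi i x}{a}\big)^{2}-\tfrac{\pi^{2}x^{2}}{a}$, so the transform factors as $e^{-\pi^{2}x^{2}/a}\int_{-\infty}^{+\infty}e^{-a(t+\pi i x/a)^{2}}\,\mathrm{d}t$. The remaining integral is a Gaussian integral taken along a horizontal line shifted into the complex plane, and \emph{this} shift is the one nontrivial point. I expect the main obstacle to be justifying that the shifted integral equals the real one $\int_{-\infty}^{+\infty}e^{-as^{2}}\,\mathrm{d}s=\sqrt{\pi/a}$. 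I would settle it by Cauchy's theorem, integrating the entire function $e^{-az^{2}}$ around a rectangle with vertices $\pm R$ and $\pm R+\tfrac{\pi i x}{a}$ and noting that the two vertical sides contribute terms bounded by a constant multiple of $e^{-aR^{2}}$, hence vanishing as $R\to\infty$. Substituting $a=4\pi^{2}\varepsilon$ then gives the prefactor $\sqrt{\pi/a}=(4\pi\varepsilon)^{-1/2}$ and the exponent $-\pi^{2}x^{2}/a=-x^{2}/(4\varepsilon)$, producing $W_{\varepsilon}(x)$.

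A clean alternative for part (b), avoiding complex contours, is to set $u(x):=\mathcal{F}[e^{-at^{2}}](x)$ and differentiate under the integral sign. Using $t\,e^{-at^{2}}=-\tfrac{1}{2a}\tfrac{\mathrm{d}}{\mathrm{d}t}e^{-at^{2}}$ and integrating by parts once (the boundary term vanishing by the Gaussian decay) yields the first-order ODE $u'(x)=-\tfrac{2\pi^{2}}{a}x\,u(x)$, whose solution is $u(x)=u(0)\,e^{-\pi^{2}x^{2}/a}$; the initial value $u(0)=\int_{-\infty}^{+\infty}e^{-at^{2}}\,\mathrm{d}t=\sqrt{\pi/a}$ comes from the classical Gaussian integral. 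Either route delivers the stated formula with the correct constants. Since both identities are classical, one may simply invoke \cite{StW}, but the computations above make the normalizing constants transparent, which is what we rely on in the inversion arguments that follow.
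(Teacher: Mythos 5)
Your computations are correct, but note that the paper does not prove this proposition at all: it is quoted as a classical result with a citation to Stein--Weiss \cite{StW}, and the paper's only ``proof'' is that reference. What your write-up adds is a self-contained verification of the constants under the paper's specific normalization $(\mathcal{F}f)(x)=\int f(t)e^{-2\pi i x t}\,\mathrm{d}t$, which is genuinely useful since these formulas change form under other Fourier conventions and the inversion results (Proposition \ref{th:PGW}, Corollary \ref{cor:p-w}) depend on the kernels integrating to exactly $1$. Both of your routes for part (b) are sound: the contour-shift argument correctly handles the only nontrivial analytic point (the vertical sides of the rectangle are $O(e^{-aR^2})$ since $\mathrm{Re}\,z^2 = R^2 - y^2$ is bounded below by $R^2$ minus a constant), and the ODE alternative $u'(x)=-\tfrac{2\pi^2}{a}x\,u(x)$ with $u(0)=\sqrt{\pi/a}$ is the standard way to avoid complex analysis altogether. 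Part (a) is, as you say, routine. In short: your proof is valid and fills in what the paper delegates to the literature; the paper's approach buys brevity, yours buys transparency of the normalizing constants.
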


\begin{lemma}
[\cite{Duo2001}]\label{lm1} For every $\varepsilon>0$, the Weierstrass and
Poisson kernels satisfy

\begin{enumerate}
[(i)]

\item $W_\varepsilon,P_{\varepsilon}\in L^{1}(\mathbb R)$;

\item $\int_{-\infty}^{+\infty}W_\varepsilon(x)\, \mathrm{d}x=\int_{-\infty
}^{+\infty}P_{\varepsilon}(x)\, \mathrm{d}x=1$.
\end{enumerate}
\end{lemma}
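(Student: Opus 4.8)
The plan is to verify both claims by direct computation, since each kernel is given explicitly in Proposition \ref{pro:poisson}. Both $W_\varepsilon$ and $P_\varepsilon$ are manifestly nonnegative, so for each kernel its $L^1$ norm coincides with its integral over $\mathbb R$; hence statements (i) and (ii) will follow simultaneously once I show that each integral equals $1$. Thus the whole lemma reduces to evaluating two classical improper integrals, both of which converge absolutely.

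For the Poisson kernel I would substitute $u=x/\varepsilon$ to reduce the integral to the standard arctangent integral:
\[
\int_{-\infty}^{+\infty}\frac{1}{\pi}\frac{\varepsilon}{\varepsilon^{2}+x^{2}}\,\mathrm{d}x=\frac{1}{\pi}\int_{-\infty}^{+\infty}\frac{\mathrm{d}u}{1+u^{2}}=\frac{1}{\pi}\bigl[\arctan u\bigr]_{-\infty}^{+\infty}=1 .
\]
For the Weierstrass kernel I would invoke the Gaussian integral $\int_{-\infty}^{+\infty}e^{-ax^{2}}\,\mathrm{d}x=\sqrt{\pi/a}$ with $a=1/(4\varepsilon)$, which gives $\int_{-\infty}^{+\infty}e^{-x^{2}/4\varepsilon}\,\mathrm{d}x=(4\pi\varepsilon)^{1/2}$; this exactly cancels the normalizing prefactor $(4\pi\varepsilon)^{-1/2}$ and yields $1$.

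Alternatively, one can bypass these computations by combining Proposition \ref{pro:poisson} with the Fourier inversion formula: since $P_{\varepsilon}=\mathcal F[e^{-2\pi\varepsilon|\cdot|}]$ and $W_{\varepsilon}=\mathcal F[e^{-4\pi^{2}\varepsilon|\cdot|^{2}}]$, evaluating the inversion identity $g(0)=\int_{\mathbb R}(\mathcal F g)(x)\,\mathrm{d}x$ at the two integrable functions $g(t)=e^{-2\pi\varepsilon|t|}$ and $g(t)=e^{-4\pi^{2}\varepsilon|t|^{2}}$, each of which satisfies $g(0)=1$, immediately produces both normalizations. There is no substantial obstacle in this lemma; the only point deserving a word of care is that the arctangent and Gaussian antiderivatives must be read as limits of proper integrals, but since the integrands are positive and integrable the substitutions above are fully justified, and the finiteness of the integrals is precisely what gives membership in $L^{1}(\mathbb R)$.
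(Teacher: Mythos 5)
Your proof is correct. Note that the paper itself offers no proof of this lemma --- it is quoted directly from Duoandikoetxea's \emph{Fourier Analysis} \cite{Duo2001} --- so there is nothing internal to compare against; your direct computation (arctangent integral for $P_\varepsilon$, Gaussian integral for $W_\varepsilon$, plus the observation that nonnegativity makes the $L^1$ norm equal the integral) is precisely the standard argument that the cited reference supplies. One small caveat on your alternative route: the inversion identity $g(0)=\int_{\mathbb R}(\mathcal F g)(x)\,\mathrm{d}x$ requires knowing $\mathcal F g\in L^1(\mathbb R)$ beforehand, which is exactly part (i) of the lemma; so that route proves (ii) only after (i) has been checked separately (easy, from the $O(1/x^2)$ and Gaussian decay), and it does not genuinely bypass the computation --- your primary argument is the cleaner and self-contained one.
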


\begin{definition}
\label{def:PGW}For $f\in L^{p}(\mathbb{R})$, $1\leq p<\infty$, and
$\varepsilon>0$, the expressions
\[
u_{\alpha}\left(  t,\varepsilon\right)  :=\left(  f\overset{\alpha}{\ast
}\tilde{P}_{\varepsilon}\right)  (t)=\mathcal{M}_{\alpha}\left[ \int_{-\infty
}^{+\infty}\mathcal{M}_{\alpha}f(x)P_{\varepsilon}( (\cdot)-x)\, \mathrm{d}x\right] (t)
\]
are called the  fractional Poisson integrals of $f$.  The expressions
\[
S_{\alpha}\left(  t,\varepsilon\right)  :=\left(  f\overset{\alpha}{\ast
}\tilde{W}_\varepsilon\right)  (t)=\mathcal{M}_{\alpha
} \left[ \int_{-\infty}^{+\infty}\mathcal{M}_{\alpha}f(x)W_\varepsilon((\cdot)-x)\mathrm{d}x \right](t)
\]
are called   and   fractional
Gauss-Weierstrass integrals  of $f$.
\end{definition}

We now focus on two  functions that give rise to
special $\Phi_\alpha$ means. Denote by
\[p_{\alpha}\left(  x\right)  =e^{-2\pi\varepsilon\left\vert \csc
\alpha\right\vert \left\vert x\right\vert }\quad \text{and} \quad w_{\alpha}\left(  x\right)= e^{-4\pi^{2}\varepsilon
x^{2}\csc^{2}\alpha}.
\]

\begin{definition}
 The $\Phi_{\alpha}$ means%
\[
M_{\varepsilon,p_\alpha}(f) =\int_{-\infty}^{+\infty}\left(  \mathcal{F}_{\alpha}f\right)  (x)
K_{-\alpha}(x,\cdot)e^{-2\pi\varepsilon\left\vert \csc\alpha\right\vert \left\vert
x\right\vert }\, \mathrm{d}x
\]
are called the \emph{Abel means of the fractional Fourier
integral of $f$}, while
\[
M_{\varepsilon,w_\alpha}(f) =\int_{-\infty}^{+\infty}\left(  \mathcal{F}_{\alpha}f\right)  (x)K_{-\alpha
}(x,\cdot)e^{-4\pi^{2}\varepsilon^{2}x^{2}\csc^{2}\alpha}\mathrm{d}x
\]
are called the \emph{Gauss means  of the fractional Fourier
integral of $f$}.
\end{definition}

By Theorem \ref{th:phi} and Proposition \ref{pro:poisson}, the Poisson
integrals and Gauss-Weierstrass integrals of $f$ are the Abel and Gauss means, respectively.
It is straightforward to verify the following identities.

\begin{proposition}
\label{th:PGW}If $f\in L^{1}(\mathbb{R})$, then for any $\varepsilon>0$, the
following identities are valid
\begin{enumerate}
\item[(a)] $u_{\alpha}\left(  t,\varepsilon\right)
   =M_{\varepsilon,p_\alpha}(f)(t)$;
\item[(b)] $S_{\alpha} (  t,\varepsilon^{2} )=M_{\varepsilon,w_\alpha}(f) (t)$.
\end{enumerate}
\end{proposition}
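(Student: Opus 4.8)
The plan is to recognize both means as special cases of the general $\Phi_\alpha$ means and then invoke Theorem \ref{th:phi}, which already rewrites $M_{\varepsilon,\Phi_\alpha}(f)$ as a fractional convolution $f\overset{\alpha}{\ast}\tilde\varphi_\varepsilon$. After that, the proposition reduces to matching the profile $\Phi$ against the Poisson and Gauss-Weierstrass kernels and performing the dilation bookkeeping.

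First, for part (a) I would observe that the Abel weight $e^{-2\pi\varepsilon|\csc\alpha||x|}$ in $M_{\varepsilon,p_\alpha}(f)$ is precisely $\Phi_\alpha(\varepsilon x)$ for the choice $\Phi(y)=e^{-2\pi|y|}$, since $\Phi_\alpha(\varepsilon x)=\Phi(\varepsilon x\csc\alpha)=e^{-2\pi\varepsilon|\csc\alpha||x|}$. Theorem \ref{th:phi} then yields $M_{\varepsilon,p_\alpha}(f)=f\overset{\alpha}{\ast}\tilde\varphi_\varepsilon$ with $\varphi=\mathcal{F}\Phi$. By Proposition \ref{pro:poisson}(a) (read with parameter $1$) we get $\varphi=\mathcal{F}[e^{-2\pi|\cdot|}]=P_1$, which is even, so $\tilde\varphi=P_1$. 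The remaining step is the dilation identity $(P_1)_\varepsilon=P_\varepsilon$, immediate from the explicit Poisson formula, giving $M_{\varepsilon,p_\alpha}(f)=f\overset{\alpha}{\ast}\tilde P_\varepsilon=u_\alpha(t,\varepsilon)$.

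Part (b) runs identically with the quadratic profile $\Phi(y)=e^{-4\pi^2 y^2}$, for which $\Phi_\alpha(\varepsilon x)=e^{-4\pi^2\varepsilon^2 x^2\csc^2\alpha}$ is exactly the Gauss weight in $M_{\varepsilon,w_\alpha}(f)$. Theorem \ref{th:phi} together with Proposition \ref{pro:poisson}(b) give $\varphi=\mathcal{F}[e^{-4\pi^2|\cdot|^2}]=W_1$, again even, so $\tilde\varphi=W_1$. The hard part -- and the reason the statement reads $S_\alpha(t,\varepsilon^2)$ rather than $S_\alpha(t,\varepsilon)$ -- is the dilation step: since the Gaussian exponent is quadratic, $(W_1)_\varepsilon(x)=\tfrac{1}{\varepsilon}W_1(x/\varepsilon)=\tfrac{1}{\varepsilon(4\pi)^{1/2}}e^{-x^2/4\varepsilon^2}=W_{\varepsilon^2}(x)$, so the $L^1$-dilation by $\varepsilon$ lands on the $\varepsilon^2$-member of the Weierstrass family, not the $\varepsilon$-member. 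This shift, which has no analogue in the homogeneity-one Poisson case, is the one point that genuinely needs care; with it one concludes $M_{\varepsilon,w_\alpha}(f)=f\overset{\alpha}{\ast}\tilde W_{\varepsilon^2}=S_\alpha(t,\varepsilon^2)$.

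Throughout, the hypotheses $f,\Phi\in L^1(\mathbb{R})$ needed for Theorem \ref{th:phi} are met, as both profiles $e^{-2\pi|\cdot|}$ and $e^{-4\pi^2|\cdot|^2}$ are integrable and Lemma \ref{lm1} puts the resulting kernels in $L^1$; thus no convergence issue obstructs the manipulations. I expect nothing substantive beyond the Gaussian dilation bookkeeping just flagged -- everything else is direct substitution into the definitions of $u_\alpha$, $S_\alpha$, $M_{\varepsilon,p_\alpha}$ and $M_{\varepsilon,w_\alpha}$.
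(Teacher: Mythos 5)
Your proposal is correct and follows exactly the route the paper intends: the paper disposes of this proposition with the remark that it follows from Theorem \ref{th:phi} and Proposition \ref{pro:poisson} and is ``straightforward to verify,'' and your argument simply fills in those details (matching the Abel and Gauss weights to $\Phi_\alpha(\varepsilon x)$ for $\Phi(y)=e^{-2\pi|y|}$ and $\Phi(y)=e^{-4\pi^2y^2}$, identifying $\varphi=P_1$ and $\varphi=W_1$, and doing the dilation bookkeeping). Your observation that $(W_1)_\varepsilon=W_{\varepsilon^2}$, which explains the $\varepsilon^2$ in part (b), is precisely the point the paper leaves implicit.
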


\subsection{FRFT inversion}

We now address the FRFT inversion problem.
In view of Theorems~\ref{th:id1}, ~\ref{th:id2} and
~\ref{th:phi}, we can derive the following   conclusions.

\begin{theorem}
\label{cor:L1}If $\Phi,\varphi:=\mathcal{F}\Phi\in L^{1}(\mathbb{R})$ and
$\int_{-\infty}^{+\infty}\varphi\left(  x\right)  dx=1$, then the
$\Phi_{\alpha}$ means of the Fourier integral of $f$ are convergent to
$f$ in the sense of $L^{1}$ norm, that is,
\[
\lim_{\varepsilon\rightarrow0}\left\Vert \int_{-\infty}^{+\infty}\left(
\mathcal{F}_{\alpha}f\right)  (x)K_{-\alpha}(\cdot,x)\Phi_{\alpha}\left(
\varepsilon x\right)  \mathrm{d}x-f\left(  \cdot\right)  \right\Vert _{1}=0.
\]

\end{theorem}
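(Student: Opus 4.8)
The plan is to read off this statement as the specialization to $p=1$ of the two results already established, namely Theorem~\ref{th:phi} (which rewrites the $\Phi_\alpha$ means as a fractional convolution) and Theorem~\ref{th:id1} (the $L^p$ approximate-identity theorem). First I would fix $f\in L^{1}(\mathbb R)$ and observe that the expression inside the norm is exactly $M_{\varepsilon,\Phi_\alpha}(f)(\cdot)$: using the symmetry $K_{-\alpha}(\cdot,x)=K_{-\alpha}(x,\cdot)$ recorded in the proof of Theorem~\ref{th:mp}, the kernel written here agrees with the one in the definition of the means. Since $f,\Phi\in L^{1}(\mathbb R)$, Theorem~\ref{th:phi} applies and gives
\[
M_{\varepsilon,\Phi_\alpha}(f)=f\overset{\alpha}{\ast}\tilde\varphi_\varepsilon ,
\]
where $\varphi=\mathcal F\Phi$ and $\tilde\varphi(x)=\varphi(-x)$.

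Next I would check that $\phi:=\tilde\varphi$ satisfies the two hypotheses of Theorem~\ref{th:id1}. Integrability $\tilde\varphi\in L^{1}(\mathbb R)$ is immediate because reflection is an isometry of $L^{1}$ and $\varphi\in L^{1}(\mathbb R)$ by assumption. For the normalization, the change of variables $x\mapsto-x$ together with the hypothesis $\int\varphi=1$ gives
\[
\int_{-\infty}^{+\infty}\tilde\varphi(x)\,\mathrm dx=\int_{-\infty}^{+\infty}\varphi(-x)\,\mathrm dx=\int_{-\infty}^{+\infty}\varphi(x)\,\mathrm dx=1 .
\]

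Finally, applying Theorem~\ref{th:id1} with $\phi=\tilde\varphi$ and $p=1$ yields
\[
\lim_{\varepsilon\to0}\bigl\|\,f\overset{\alpha}{\ast}\tilde\varphi_\varepsilon-f\,\bigr\|_{1}=0 ,
\]
which, upon substituting the identity from the first step, is precisely the asserted convergence. Since the genuine analytic content---the Minkowski-integral-inequality estimate and the density/uniform-continuity argument---is already packaged inside Theorem~\ref{th:id1}, there is no real obstacle to overcome; the statement is essentially a corollary. The only points that need a moment's care are the bookkeeping that the reflected kernel $K_{-\alpha}(\cdot,x)$ matches the definition of $M_{\varepsilon,\Phi_\alpha}$, and that passing from $\varphi$ to $\tilde\varphi$ preserves both $L^{1}$ membership and the unit integral; both are routine.
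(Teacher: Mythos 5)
Your proposal is correct and follows exactly the route the paper intends: the paper derives Theorem~\ref{cor:L1} directly from Theorem~\ref{th:phi} (identifying the $\Phi_\alpha$ means with $f\overset{\alpha}{\ast}\tilde\varphi_\varepsilon$) together with Theorem~\ref{th:id1} applied with $p=1$, which is precisely your argument. The paper leaves the verification that $\tilde\varphi$ inherits $L^1$ membership and unit integral, and the kernel-symmetry bookkeeping, unstated; you have simply filled in those routine details.
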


\begin{theorem}
\label{cor:point}If $\Phi,\varphi:=\mathcal{F}\Phi\in L^{1}(\mathbb{R})$,
$\psi=\underset{\left\vert t\right\vert \geq\left\vert x\right\vert }{\sup
}\left\vert \varphi\left(  t\right)  \right\vert \in L^{1}(\mathbb{R})$ and
$\int_{-\infty}^{+\infty}\varphi\left(  x\right)  dx=1$, then the
$\Phi_{\alpha}$ means of the Fourier integral of $f$ are a.e. convergent to
$f$,   that is,
\[
\int_{-\infty}^{+\infty}\left(  \mathcal{F}_{\alpha}f\right)  (x)K_{-\alpha
}(t,x )\Phi_{\alpha}\left(  \varepsilon x\right)  \mathrm{d}x\rightarrow
f\left(  t\right)
\]
 as $\varepsilon\rightarrow0$ for almost all $t\in\mathbb{R}$.
\end{theorem}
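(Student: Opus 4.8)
The plan is to reduce the statement to the pointwise approximate-identity result already established in Theorem~\ref{th:id2}, using the representation of the $\Phi_{\alpha}$ means as a fractional convolution supplied by Theorem~\ref{th:phi}. In other words, this is the almost-everywhere analogue of the $L^1$ convergence statement in Theorem~\ref{cor:L1}, and it should follow by composing two earlier results rather than by any fresh computation.

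First I would observe that, since $K_{-\alpha}(t,x)=K_{-\alpha}(x,t)$ (as noted in the proof of Theorem~\ref{th:mp}), the integral appearing in the statement is exactly $M_{\varepsilon,\Phi_{\alpha}}(f)(t)$. Under the hypotheses $\Phi\in L^{1}(\mathbb{R})$ and $\varphi:=\mathcal{F}\Phi\in L^{1}(\mathbb{R})$, Theorem~\ref{th:phi} then gives
\[
M_{\varepsilon,\Phi_{\alpha}}(f)(t)=\bigl(f\overset{\alpha}{\ast}\tilde{\varphi}_{\varepsilon}\bigr)(t),
\]
where $\tilde{\varphi}(x)=\varphi(-x)$. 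Thus it suffices to prove that $\bigl(f\overset{\alpha}{\ast}\tilde{\varphi}_{\varepsilon}\bigr)(t)\to f(t)$ for almost every $t$ as $\varepsilon\to0$.

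Next I would verify that the choice $\phi:=\tilde{\varphi}$ meets the hypotheses of Theorem~\ref{th:id2}. Integrability $\tilde{\varphi}\in L^{1}(\mathbb{R})$ is immediate from $\varphi\in L^{1}(\mathbb{R})$, and the normalization $\int_{-\infty}^{+\infty}\tilde{\varphi}(x)\,\mathrm{d}x=\int_{-\infty}^{+\infty}\varphi(x)\,\mathrm{d}x=1$ follows because reflection preserves the integral. The one point that requires a moment's care is the decreasing radial dominant function: the hypothesis furnishes $\psi(x)=\sup_{|t|\geq|x|}|\varphi(t)|\in L^{1}(\mathbb{R})$, defined from $\varphi$, whereas Theorem~\ref{th:id2} demands the dominant function of $\tilde{\varphi}$. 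However, since $|\tilde{\varphi}(t)|=|\varphi(-t)|$ and the set $\{t:|t|\geq|x|\}$ is symmetric about the origin, the decreasing radial dominant function of $\tilde{\varphi}$ coincides with $\psi$, which lies in $L^{1}(\mathbb{R})$ by assumption.

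With these three conditions confirmed, Theorem~\ref{th:id2} applies with $\phi=\tilde{\varphi}$ and yields
\[
\lim_{\varepsilon\rightarrow0}\bigl(f\overset{\alpha}{\ast}\tilde{\varphi}_{\varepsilon}\bigr)(t)=f(t),\qquad\mathrm{a.e.}\ t\in\mathbb{R},
\]
which is exactly the asserted almost-everywhere convergence. I do not expect any genuine obstacle in this argument; the only step deserving explicit attention is the identification of the dominant function of the reflected kernel $\tilde{\varphi}$ with the given $\psi$, after which the conclusion is a direct invocation of Theorem~\ref{th:id2}.
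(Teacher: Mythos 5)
Your proof is correct and follows the same route the paper intends: the paper derives this theorem directly from Theorems~\ref{th:phi} and~\ref{th:id2}, exactly as you do. Your write-up actually adds useful detail the paper omits, namely the symmetry $K_{-\alpha}(t,x)=K_{-\alpha}(x,t)$ and the observation that the decreasing radial dominant function of $\tilde{\varphi}$ coincides with $\psi$.
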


\bigbreak In particular, in view of Theorem \ref{cor:L1}-\ref{cor:point}, Proposition \ref{th:PGW} and the properties of Weierstrass kernel and Poisson kernel
(Lemma \ref{lm1}), we deduce the following result.

\begin{corollary}
\label{cor:p-w}If $f\in L^{1}(\mathbb{R})$, then the Gauss and Abel means of
the fractional Fourier integral of $f$   converge
to $f$ in $L^{1}$   and a.e., that is,
\[
\lim\limits_{\varepsilon\rightarrow0}\left\Vert M_{\varepsilon,p_\alpha}(f)-f  \right\Vert _{1}=0 , \quad \lim\limits_{\varepsilon\rightarrow0}\left\Vert M_{\varepsilon,w_\alpha}(f)-f
\right\Vert _{1}=0,\]
and
\[
M_{\varepsilon,p_\alpha}(f)(t)\to f(t),\quad M_{\varepsilon,w_\alpha}(f)(t)\to f(t)
\]
for almost all $  t \in\mathbb{R} $ as $\varepsilon\to 0$.
\end{corollary}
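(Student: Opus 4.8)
The plan is to deduce both the $L^1$ and the a.e.\ convergence directly from the general results already proved, Theorem~\ref{cor:L1} and Theorem~\ref{cor:point}, by recognizing the Abel and Gauss means as $\Phi_\alpha$ means attached to two explicit profiles $\Phi$. First I would choose, for the Abel case, $\Phi(x)=e^{-2\pi|x|}$, so that the weight $\Phi_\alpha(\varepsilon x)=e^{-2\pi\varepsilon|\csc\alpha|\,|x|}$ is exactly the one in the definition of $M_{\varepsilon,p_\alpha}(f)$; by Proposition~\ref{pro:poisson}(a) its Fourier transform is the Poisson kernel, $\varphi=\mathcal{F}\Phi=P_1$. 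For the Gauss case I would take $\Phi(x)=e^{-4\pi^2|x|^2}$, whose Fourier transform is the Weierstrass kernel $\varphi=\mathcal{F}\Phi=W_1$ by Proposition~\ref{pro:poisson}(b); the reparametrization $\varepsilon\mapsto\varepsilon^2$ recorded in Proposition~\ref{th:PGW}(b) is immaterial in the limit $\varepsilon\to0$.

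Next I would check the hypotheses of the two convergence theorems for each profile. The membership $\Phi\in L^1(\mathbb{R})$ is clear, and the requirements $\varphi\in L^1(\mathbb{R})$ together with $\int_{-\infty}^{+\infty}\varphi(x)\,dx=1$ are precisely Lemma~\ref{lm1}, since $\varphi$ is in each case the Poisson or Weierstrass kernel. With these verified, Theorem~\ref{cor:L1} immediately yields $\|M_{\varepsilon,p_\alpha}(f)-f\|_1\to0$ and $\|M_{\varepsilon,w_\alpha}(f)-f\|_1\to0$.

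For the pointwise statement I must additionally produce the radial majorant hypothesis of Theorem~\ref{cor:point}, namely $\psi(x)=\sup_{|t|\ge|x|}|\varphi(t)|\in L^1(\mathbb{R})$. This is the only step that calls for a remark, and it is essentially free: both $P_1(x)=\tfrac{1}{\pi(1+x^2)}$ and $W_1(x)=\tfrac{1}{(4\pi)^{1/2}}e^{-x^2/4}$ are even and nonincreasing in $|x|$, hence equal to their own radial majorants, so $\psi=\varphi\in L^1(\mathbb{R})$. Theorem~\ref{cor:point} then gives the a.e.\ convergence, and Proposition~\ref{th:PGW}, which identifies $M_{\varepsilon,p_\alpha}(f)$ with $u_\alpha(t,\varepsilon)$ and $M_{\varepsilon,w_\alpha}(f)$ with $S_\alpha(t,\varepsilon^2)$, closes the argument. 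I do not anticipate a genuine obstacle: all the analytic work lives in the earlier theorems, and this corollary is their instantiation at the two canonical kernels, the one subtlety being the self-majorizing property of the Poisson and Gauss kernels that makes the radial-majorant condition automatic.
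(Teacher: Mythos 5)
Your proposal is correct and follows essentially the same route as the paper, which deduces the corollary directly from Theorems~\ref{cor:L1} and~\ref{cor:point} together with Proposition~\ref{th:PGW} and Lemma~\ref{lm1}; your explicit choices $\Phi(x)=e^{-2\pi|x|}$ and $\Phi(x)=e^{-4\pi^{2}x^{2}}$, and the observation that the Poisson and Weierstrass kernels are their own decreasing radial majorants, simply spell out the verifications the paper leaves implicit.
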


\begin{corollary}
\label{th:invers}If $f,\mathcal{F}_{\alpha}f\in L^{1}$, then for almost all
$x\in\mathbb{R}$, we have%
\[
f(t)=\int_{-\infty}^{+\infty}(\mathcal{F}_{\alpha}f)(x)K_{-\alpha}(x,t)\, \mathrm{d}x.
\]
\end{corollary}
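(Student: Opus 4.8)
The plan is to obtain the inversion formula as the $\varepsilon \to 0$ limit of the Abel means, exploiting that these means are already known to converge to $f$ by Corollary~\ref{cor:p-w}. Recall that the Abel means of the fractional Fourier integral of $f$ are
\[
M_{\varepsilon,p_\alpha}(f)(t) = \int_{-\infty}^{+\infty} (\mathcal{F}_{\alpha}f)(x) K_{-\alpha}(x,t) e^{-2\pi\varepsilon\left\vert \csc\alpha\right\vert \left\vert x\right\vert}\,\mathrm{d}x.
\]
The key observation is that the formula we wish to establish is exactly what one obtains by letting $\varepsilon \to 0$ \emph{inside} this integral.

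First I would justify passing the limit under the integral sign. Since $\left\vert K_{-\alpha}(x,t)\right\vert = \left\vert A_{-\alpha}\right\vert$ for all $x,t$, and since $0 < e^{-2\pi\varepsilon\left\vert \csc\alpha\right\vert \left\vert x\right\vert} \le 1$ for every $\varepsilon > 0$, the integrand is dominated, uniformly in $\varepsilon$, by $\left\vert A_{-\alpha}\right\vert\,\left\vert (\mathcal{F}_{\alpha}f)(x)\right\vert$. The hypothesis $\mathcal{F}_{\alpha}f \in L^{1}(\mathbb{R})$ guarantees that this dominating function is integrable. Because $e^{-2\pi\varepsilon\left\vert \csc\alpha\right\vert \left\vert x\right\vert} \to 1$ pointwise as $\varepsilon \to 0$, Lebesgue's dominated convergence theorem yields, for every fixed $t \in \mathbb{R}$,
\[
\lim_{\varepsilon\to 0} M_{\varepsilon,p_\alpha}(f)(t) = \int_{-\infty}^{+\infty} (\mathcal{F}_{\alpha}f)(x) K_{-\alpha}(x,t)\,\mathrm{d}x.
\]

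On the other hand, since $f \in L^{1}(\mathbb{R})$, Corollary~\ref{cor:p-w} tells us that $M_{\varepsilon,p_\alpha}(f)(t) \to f(t)$ for almost every $t \in \mathbb{R}$ as $\varepsilon \to 0$. Comparing the two limits and invoking uniqueness of the limit at each point where both hold, I conclude that
\[
f(t) = \int_{-\infty}^{+\infty} (\mathcal{F}_{\alpha}f)(x) K_{-\alpha}(x,t)\,\mathrm{d}x
\]
for almost all $t \in \mathbb{R}$, as desired.

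The argument is essentially immediate once the machinery of fractional approximate identities is in place; the only point requiring care, and it is a very mild one, is the uniform domination of the Abel-regularized integrand. This is precisely where the extra integrability hypothesis $\mathcal{F}_{\alpha}f \in L^{1}$, which is not available for a general $f \in L^{1}$ (cf. the examples above), is actually used. I would remark in passing that the Gauss means could be substituted for the Abel means throughout, via the analogous bound $0 < e^{-4\pi^{2}\varepsilon^{2}x^{2}\csc^{2}\alpha} \le 1$ together with the a.e. convergence furnished again by Corollary~\ref{cor:p-w}.
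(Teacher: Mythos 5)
Your proof is correct and follows essentially the same route as the paper: combine the a.e.\ convergence of the means from Corollary~\ref{cor:p-w} with dominated convergence (using $|K_{-\alpha}(x,t)|=|A_{-\alpha}|$ and $\mathcal{F}_{\alpha}f\in L^{1}$) to identify the two limits. The only difference is that the paper regularizes with the Gauss means while you use the Abel means, an immaterial choice, as you yourself note.
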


\begin{proof}
Consider the Gauss mean of the fractional Fourier integral $\mathcal{F}%
_{\alpha}f$. On one hand, it follows from Corollary \ref{cor:p-w} that%
\[
M_{\varepsilon,w_\alpha}(f)(t)=\int_{-\infty}^{+\infty}(\mathcal{F}_{\alpha}f)(x)K_{-\alpha}(x,t)e^{-4\pi
^{2}\varepsilon x^{2}\csc^{2}\alpha}\mathrm{d}x\rightarrow f\left(  t\right)
\]
for almost all $t\in\mathbb{R}$, as $\varepsilon\rightarrow0$.
On the other hand, as $\mathcal{F}_{\alpha}f\in L^{1}(\mathbb{R})$, by the
Lebesgue dominated convergence theorem we obtain that
\[
\int_{-\infty}^{+\infty}\left(  \mathcal{F}_{\alpha}f\right)  (x)K_{-\alpha
}(x,t)e^{-4\pi^{2}\varepsilon x^{2}\csc^{2}\alpha}\mathrm{d}x\rightarrow
\int_{-\infty}^{+\infty}\left(  \mathcal{F}_{\alpha}f\right)  (x)K_{-\alpha
}(x,t)\, \mathrm{d}x
\]
as $\varepsilon\rightarrow0$. This proves the desired result.
\end{proof}

\begin{corollary}
Let $f\in L^{1}(\mathbb{R})$. If $\mathcal{F}_{\alpha}f\geq0$ and $f$ is
continuous at $t=0$, then $\mathcal{F}_{\alpha}f\in L^{1}(\mathbb{R})$.
Furthermore,
\[
f(t)=\int_{-\infty}^{+\infty}(\mathcal{F}_{\alpha}f)(x)K_{-\alpha
}(x,t)\, \mathrm{d}x,\quad\text{for almost all }t\in\mathbb{R}.
\]
In particular, $\int_{-\infty}^{+\infty}\left(  \mathcal{F}_{\alpha}f\right)
(x)\, \mathrm{d}x=f\left(  0\right)  $.
\end{corollary}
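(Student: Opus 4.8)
The plan is to follow the classical Bochner-type argument, adapted through the multiplication formula of Theorem~\ref{th:mp} together with the Gaussian summability already developed. Write $h:=\mathcal F_{\alpha}f$, so that $h\ge 0$ by hypothesis, and recall from Lemma~\ref{th:RL} and Proposition~\ref{th:cont} that $h\in C_{0}(\mathbb R)$. The first and central step is to prove $h\in L^{1}(\mathbb R)$. To this end I would test $h$ against the nonnegative Gaussians $\gamma_{\varepsilon}(x):=e^{-4\pi^{2}\varepsilon^{2}x^{2}\csc^{2}\alpha}$ (the very weight defining the Gauss means $M_{\varepsilon,w_\alpha}$), which lie in $L^{1}(\mathbb R)$, are bounded by $1$, and increase monotonically to $1$ as $\varepsilon\downarrow 0$. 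The multiplication formula \eqref{eq:multipl} gives
\[
\int_{-\infty}^{+\infty} h(x)\gamma_{\varepsilon}(x)\,\mathrm{d}x=\int_{-\infty}^{+\infty} f(x)\,(\mathcal F_{\alpha}\gamma_{\varepsilon})(x)\,\mathrm{d}x .
\]

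Because $h\ge 0$ and $\gamma_{\varepsilon}\uparrow 1$, the left-hand side is real, nonnegative and nondecreasing as $\varepsilon\downarrow 0$; by the monotone convergence theorem it tends to $\int_{-\infty}^{+\infty}h(x)\,\mathrm{d}x\in[0,+\infty]$. Thus it suffices to show that the right-hand side stays bounded, for which I would compute $\mathcal F_{\alpha}\gamma_{\varepsilon}$ explicitly: since $\mathcal M_{\alpha}\gamma_{\varepsilon}$ is again a complex Gaussian, its classical Fourier transform is elementary, and carrying the chirp factors through \eqref{eq:def} yields a closed form for $\mathcal F_{\alpha}\gamma_{\varepsilon}$ of Gaussian type. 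Here the behaviour splits according to whether $\cot\alpha=0$. For $\cot\alpha\neq 0$ the resulting function is uniformly bounded in both $x$ and $\varepsilon$ (the prefactor is controlled by $|A_{\alpha}||\cot\alpha|^{-1/2}$ and the Gaussian modulus is $\le 1$) and converges pointwise to a bounded chirp, so the dominated convergence theorem---using only $f\in L^{1}$---forces the right-hand side to a finite limit. For $\cot\alpha=0$ (the classical Fourier case $\alpha=\pi/2,3\pi/2$) one has, by Proposition~\ref{pro:poisson}(b), that $\mathcal F_{\alpha}\gamma_{\varepsilon}$ is precisely the Weierstrass approximate identity $W_{\varepsilon^{2}}$, and it is exactly here that continuity of $f$ at $0$ is used: the right-hand side equals $(f\ast W_{\varepsilon^{2}})(0)$, which converges to $f(0)$. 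In either case the right-hand side has a finite limit, so $\int h<\infty$ and $\mathcal F_{\alpha}f\in L^{1}(\mathbb R)$. (The cases $\alpha=n\pi$ are trivial, since there $\mathcal F_{\alpha}f=f$ or $f(-\,\cdot\,)$.)

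Once integrability is established the inversion formula is immediate from Corollary~\ref{th:invers}, as now both $f$ and $\mathcal F_{\alpha}f$ lie in $L^{1}(\mathbb R)$, giving $f(t)=\int_{-\infty}^{+\infty}(\mathcal F_{\alpha}f)(x)K_{-\alpha}(x,t)\,\mathrm{d}x$ for almost every $t$. The concluding ``in particular'' I would obtain by specialising to $t=0$: since $\mathcal F_{\alpha}f\in L^{1}$, the right-hand side equals $\mathcal F_{-\alpha}(\mathcal F_{\alpha}f)(t)$ and hence, by Proposition~\ref{th:cont}(ii), is continuous in $t$, while $f$ is continuous at $0$ by hypothesis; letting $t\to 0$ through points where the a.e.\ identity holds yields $f(0)=\int_{-\infty}^{+\infty}(\mathcal F_{\alpha}f)(x)K_{-\alpha}(x,0)\,\mathrm{d}x$.

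The step I expect to be the main obstacle is controlling the right-hand side $\int f\,\mathcal F_{\alpha}\gamma_{\varepsilon}$, and more precisely the residual chirp $e^{-i\pi x^{2}\cot\alpha}$ built into both $\mathcal F_{\alpha}\gamma_{\varepsilon}$ and $K_{-\alpha}(x,0)=A_{-\alpha}e^{-i\pi x^{2}\cot\alpha}$. This chirp is exactly what blocks the naive route of evaluating the Gauss mean $M_{\varepsilon,w_\alpha}(f)$ at $t=0$ and applying monotone convergence there: at $t=0$ the integrand $h(x)K_{-\alpha}(x,0)\gamma_{\varepsilon}(x)$ is genuinely complex rather than nonnegative, and the triangle inequality only returns the lower bound $\int h\,\gamma_{\varepsilon}\ge |M_{\varepsilon,w_\alpha}(f)(0)|/|A_{-\alpha}|$, useless for finiteness. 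Routing the monotone convergence through the \emph{unweighted} integral $\int h\,\gamma_{\varepsilon}$ via the multiplication formula is what sidesteps the difficulty, at the cost of the dichotomy on $\cot\alpha$ above. For the same reason one should note that, for $\cot\alpha\neq 0$, $K_{-\alpha}(x,0)\neq 1$, so the final identity is most accurately stated as $f(0)=\int_{-\infty}^{+\infty}(\mathcal F_{\alpha}f)(x)K_{-\alpha}(x,0)\,\mathrm{d}x$, which collapses to $\int_{-\infty}^{+\infty}(\mathcal F_{\alpha}f)(x)\,\mathrm{d}x$ precisely in the classical normalization $\cot\alpha=0$.
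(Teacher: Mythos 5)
Your proof is correct, and there is nothing in the paper to compare it against: the corollary is stated there with no proof at all, the implicit suggestion being that it follows from Corollaries~\ref{cor:p-w} and~\ref{th:invers} by the classical Bochner-type argument (Gauss means at the origin plus monotone convergence). As you observe, that naive transplant fails in the fractional setting, because the weight in the Gauss mean at $t=0$ is $K_{-\alpha}(x,0)\,\gamma_{\varepsilon}(x)=A_{-\alpha}e^{-i\pi x^{2}\cot\alpha}\gamma_{\varepsilon}(x)$, which is not nonnegative, so monotonicity is unavailable exactly where the classical proof uses it. Your detour---pairing $h=\mathcal F_{\alpha}f\ge 0$ with the chirp-free Gaussians $\gamma_{\varepsilon}(x)=e^{-4\pi^{2}\varepsilon^{2}x^{2}\csc^{2}\alpha}$ via the multiplication formula (Theorem~\ref{th:mp}) and computing $\mathcal F_{\alpha}\gamma_{\varepsilon}$ as a complex Gaussian---is the right repair, and each step checks out: monotone convergence on the left, dominated convergence on the right (with the uniform bound $|\mathcal F_{\alpha}\gamma_{\varepsilon}|\le|A_{\alpha}||\cot\alpha|^{-1/2}$ when $\cot\alpha\neq0$), the approximate-identity limit $(f\ast W_{\varepsilon^{2}})(0)\to f(0)$ when $\cot\alpha=0$, and the continuity argument upgrading the a.e.\ inversion identity to the point $t=0$. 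Note that your dichotomy proves more than is claimed: since $|A_{\alpha}||\cot\alpha|^{-1/2}=|\sec\alpha|^{1/2}$, for $\cot\alpha\neq0$ you obtain $\left\Vert \mathcal F_{\alpha}f\right\Vert _{1}\leq|\sec\alpha|^{1/2}\left\Vert f\right\Vert _{1}$ whenever $\mathcal F_{\alpha}f\ge0$, with no continuity hypothesis at all; continuity at $0$ is genuinely needed only in the classical cases $\cot\alpha=0$.

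Your closing caveat also deserves to be stated more strongly: the paper's final identity $\int_{-\infty}^{+\infty}(\mathcal F_{\alpha}f)(x)\,\mathrm{d}x=f(0)$ is not merely imprecise but false when $\cot\alpha\neq0$. Take $a\neq0$, let $h(x)=e^{-\pi(x-a)^{2}}$, and set $f:=\mathcal F_{-\alpha}h$; then $f\in L^{1}(\mathbb R)$ is continuous, and Corollary~\ref{th:invers} (applied with $-\alpha$ in place of $\alpha$) together with continuity gives $\mathcal F_{\alpha}f=h\ge0$ everywhere. Writing $c=\cot\alpha$, a Gaussian computation yields
\[
\int_{-\infty}^{+\infty}(\mathcal F_{\alpha}f)(x)\,\mathrm{d}x=1,
\qquad
f(0)=A_{-\alpha}\int_{-\infty}^{+\infty}e^{-i\pi t^{2}c}h(t)\,\mathrm{d}t
=\exp\left(-\frac{\pi a^{2}c^{2}}{1+c^{2}}-\frac{i\pi a^{2}c}{1+c^{2}}\right),
\]
so $f(0)$ is not even real, let alone equal to $1$. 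The correct conclusion is the one you derive, $f(0)=\int_{-\infty}^{+\infty}(\mathcal F_{\alpha}f)(x)K_{-\alpha}(x,0)\,\mathrm{d}x$, which collapses to the printed formula precisely when $\cot\alpha=0$.
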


\begin{remark}
\noindent (i)  Even if $\mathcal{F}_{\alpha}f\notin L^{1}(\mathbb{R})$, the Gauss and
Abel means of the integral
\[
\int_{-\infty}^{+\infty}(\mathcal{F}_{\alpha}f)(x)K_{-\alpha}(x,t)\,  \mathrm{d}x
\]
may make sense. For example, if $\mathcal{F}_{\alpha}f\notin L^{1}%
(\mathbb{R})$ and $\mathcal{F}_{\alpha}f$ is bounded, then
\[
M_{\varepsilon,p_\alpha}(f)(t) ,M_{\varepsilon,w_\alpha}(f)(t) <\infty \quad\forall\varepsilon>0.
\]
\noindent (ii)
 Even if $\mathcal{F}_{\alpha}f\notin L^{1}(\mathbb{R})$, the limits
$ \lim\limits_{\varepsilon\rightarrow 0} u_{\alpha}\left(  t,\varepsilon
\right)  \ $and $\lim\limits_{\varepsilon\rightarrow 0} S_{\alpha} (
t,\varepsilon^{2} )  $ may exist. For example, this is the case when $(\mathcal{F}_{\alpha
}f)(x)=\sin x/x$.
\end{remark}

\begin{theorem}
[Uniqueness of FRFT on $L^{1}(\mathbb{R})$]If $f_{1},f_{2}\in L^{1}%
(\mathbb{R})$ and $\left(  \mathcal{F}_{\alpha}f_{1}\right)  (x)=\left(
\mathcal{F}_{\alpha}f_{2}\right)  (x)$ for all $x\in\mathbb{R}$, then
\begin{equation}
f_{1}\left(  t\right)  =f_{2}\left(  t\right)  ,\quad\text{a.e. }%
t\in\mathbb{R}. \label{eq:unq}%
\end{equation}

\end{theorem}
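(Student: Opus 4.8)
The plan is to reduce the statement to a uniqueness property for a single function and then invoke the summability results already established. By the linearity of $\mathcal{F}_{\alpha}$ (Proposition \ref{th:cont}(i)), set $f:=f_{1}-f_{2}\in L^{1}(\mathbb{R})$; then the hypothesis $(\mathcal{F}_{\alpha}f_{1})(x)=(\mathcal{F}_{\alpha}f_{2})(x)$ for all $x$ becomes $(\mathcal{F}_{\alpha}f)(x)=0$ for every $x\in\mathbb{R}$. Proving \eqref{eq:unq} thus amounts to showing that $\mathcal{F}_{\alpha}f\equiv 0$ forces $f=0$ almost everywhere.

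To this end I would feed $f$ into the Gauss means of its fractional Fourier integral. By the defining formula,
\[
M_{\varepsilon,w_\alpha}(f)(t)=\int_{-\infty}^{+\infty}(\mathcal{F}_{\alpha}f)(x)K_{-\alpha}(x,t)\,e^{-4\pi^{2}\varepsilon^{2}x^{2}\csc^{2}\alpha}\,\mathrm{d}x,
\]
so the assumption $\mathcal{F}_{\alpha}f\equiv 0$ makes the integrand vanish identically, whence $M_{\varepsilon,w_\alpha}(f)(t)=0$ for every $\varepsilon>0$ and every $t\in\mathbb{R}$.

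On the other hand, Corollary \ref{cor:p-w} guarantees that the Gauss means converge back to $f$: both $\lim_{\varepsilon\to0}\lVert M_{\varepsilon,w_\alpha}(f)-f\rVert_{1}=0$ and $M_{\varepsilon,w_\alpha}(f)(t)\to f(t)$ for almost every $t$ as $\varepsilon\to0$. Passing to the limit in the pointwise statement, while the left-hand side stays identically zero, yields $f(t)=0$ for almost every $t$, which is precisely \eqref{eq:unq}. (Equivalently, one could invoke only the $L^{1}$ convergence and conclude $\lVert f\rVert_{1}=\lim_{\varepsilon\to0}\lVert M_{\varepsilon,w_\alpha}(f)\rVert_{1}=0$; the Abel means would serve just as well in place of the Gauss means.)

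There is essentially no analytic obstacle at this stage, since all the substantive work—the approximate-identity estimates of Theorems \ref{th:id1} and \ref{th:id2}, together with the identification of the means as a fractional convolution in Theorem \ref{th:phi}—has already been packaged into Corollary \ref{cor:p-w}. The only point worth noting is that the hypothesis delivers the vanishing of $\mathcal{F}_{\alpha}f$ at \emph{every} point rather than merely almost everywhere, which is exactly what is assumed; the continuity $\mathcal{F}_{\alpha}f\in C_{0}(\mathbb{R})$ recorded in \eqref{implication} is consistent with this, though it is not actually needed for the argument.
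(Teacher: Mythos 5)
Your proposal is correct and follows essentially the same route as the paper: the paper also sets $g=f_{1}-f_{2}$, notes $\mathcal{F}_{\alpha}g\equiv 0$, and applies Corollary \ref{th:invers} (whose hypotheses hold trivially since $\mathcal{F}_{\alpha}g=0\in L^{1}(\mathbb{R})$) to conclude $g=0$ a.e. The only cosmetic difference is that you inline the proof of that corollary, i.e.\ you argue directly from the vanishing of the Gauss means and Corollary \ref{cor:p-w} rather than citing the inversion statement itself.
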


\begin{proof}
Let $g=f_{1}-f_{2}$. Then
\[
\mathcal{F}_{\alpha}g=\mathcal{F}_{\alpha}f_{1}-\mathcal{F}_{\alpha}f_{2}.
\]
It follows from Corollary \ref{th:invers} that%
\[
g\left(  x\right)  =\int_{-\infty}^{+\infty}(\mathcal{F}_{\alpha
}g)(t)K_{-\alpha}(x,t)\, \mathrm{d}t=0
\]
a.e. on $\mathbb{R}$, which implies ($\ref{eq:unq}$).
\end{proof}

\section{FRFT on $L^{p}(\mathbb{R})~~(1<p<2)$}

\label{sect:Lp}

Having set down the basic facts concerning the action of the FRFT on
$L^{1}(\mathbb{R})\ $and $L^{2}(\mathbb{R})$, we now     extend its
definition on $L^{p}(\mathbb{R})$ for $1<p<2$.
Note that $L^{p}(\mathbb{R})$ is contained in $ L^{1}(\mathbb{R})+L^{2}(\mathbb{R})$ for
$1<p<2$, where
\[
L^{1}(\mathbb{R})+L^{2}(\mathbb{R})=\left\{
 f_{1}+f_{2}:\,\, f_{1}\in L^{1}(\mathbb{R}),f_{2}\in L^{2}(\mathbb{R})\right\}.
\]

\begin{definition}
For $f\in L^{p}(\mathbb{R})$, $1<p<2$, with
\[
f=f_{1}+f_{2},\qquad f_{1}\in L^{1}(\mathbb{R}), f_{2}\in L^{2}(\mathbb{R}),
\]
the FRFT of order $\alpha$ of $f$ defined by $\mathcal{F}_{\alpha
}f=\mathcal{F}_{\alpha}f_{1}+\mathcal{F}_{\alpha}f_{2}$.
\end{definition}

\begin{remark}
The decomposition  of $f$ as $f_1+f_2$ is not unique. However, the definition of
$\mathcal{F}_{\alpha}f$ is independent on the choice of $f_{1}$ and $f_{2}$.
If $f_{1}+f_{2}=g_{1}+g_{2}$ for $f_{1},g_{1}\in L^{1}(\mathbb{R})$ and
$f_{2},g_{2}\in L^{2}(\mathbb{R})$, we have $f_{1}-h_{1}=f_{2}-h_{2}\in
L^{1}(\mathbb{R})\cap L^{2}(\mathbb{R})$. Since those functions are equal,
their FRFT are also equal, and we
obtain $\mathcal{F}_{\alpha}f_{1}-\mathcal{F}_{\alpha}h_{1}=\mathcal{F}%
_{\alpha}f_{2}-\mathcal{F}_{\alpha}h_{2}$, using the linearity of the
FRFT, which yields $\mathcal{F}_{\alpha }(f_{1}+f_{2})=\mathcal{F}_{\alpha}(h_{1}+h_{2})$.
\end{remark}

We have the following result concerning the action of the FRFT\ on
$L^{p}(\mathbb{R})$.

\begin{theorem}
[Hausdorff-Young inequality]Let $1<p\leq2$, $p^{\prime}=p/(p-1)$. Then $\mathcal{F}_{\alpha}$
are bounded linear operators from $L^{p}(\mathbb{R})$ to $L^{p^{\prime}%
}(\mathbb{R})$. Moreover,
\begin{equation}\label{HYineq}
\left\Vert \mathcal{F}_{\alpha}f\right\Vert _{p^{\prime}}\leq A_\alpha^{\frac2p-1}\left\Vert
f\right\Vert _{p}.
\end{equation}

\end{theorem}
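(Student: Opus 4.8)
The plan is to obtain \eqref{HYineq} by interpolating between the two endpoint mapping properties already established, namely the $L^{1}\to L^{\infty}$ bound of Proposition \ref{th:cont}(i) and the $L^{2}$ isometry of Theorem \ref{prop:L2}(ii). Concretely, Proposition \ref{th:cont}(i) gives $\|\mathcal{F}_{\alpha}f\|_{\infty}\le|A_{\alpha}|\,\|f\|_{1}$, so $\mathcal{F}_{\alpha}$ maps $L^{1}\to L^{\infty}$ with norm at most $|A_{\alpha}|$, while Theorem \ref{prop:L2}(ii) gives $\|\mathcal{F}_{\alpha}f\|_{2}=\|f\|_{2}$, so $\mathcal{F}_{\alpha}$ maps $L^{2}\to L^{2}$ with norm $1$. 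Applying the Riesz--Thorin interpolation theorem with parameter $\theta$ determined by $\tfrac1p=(1-\theta)\cdot 1+\theta\cdot\tfrac12$, that is $\theta=2-\tfrac2p$ and $1-\theta=\tfrac2p-1$, yields a bounded operator from $L^{p}$ to $L^{p'}$ of norm at most $|A_{\alpha}|^{\,1-\theta}\cdot 1^{\theta}=|A_{\alpha}|^{\,2/p-1}$, which is exactly \eqref{HYineq} (here $A_{\alpha}^{2/p-1}$ is read as $|A_{\alpha}|^{2/p-1}$). The endpoints $p=2$ and $p=1$ recover the isometry and the $L^{1}\to L^{\infty}$ bound respectively, a useful consistency check.

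The one point requiring care before invoking Riesz--Thorin is that it applies to a single linear operator, whereas $\mathcal{F}_{\alpha}$ has been given two a priori different definitions: the absolutely convergent integral \eqref{eq:fa} on $L^{1}$ and the truncated-limit formula \eqref{intro:fa2} on $L^{2}$. I would therefore first verify that these agree on $L^{1}\cap L^{2}$. For such $f$, the function $t\mapsto K_{\alpha}(x,t)f(t)$ is integrable (since $K_{\alpha}$ is bounded and $f\in L^{1}$), so the truncated integrals $\int_{-R}^{R}K_{\alpha}(x,t)f(t)\,\mathrm{d}t$ converge to the $L^{1}$ value \eqref{eq:fa} for every $x$; they also converge in the $L^{2}$ sense defining \eqref{intro:fa2}, and passing to an a.e.-convergent subsequence shows the two limits coincide almost everywhere. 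Thus $\mathcal{F}_{\alpha}$ is an unambiguous linear operator on the dense subspace of simple functions, to which Riesz--Thorin applies, the resulting bound extending to all of $L^{p}$ by density.

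Alternatively, and giving the same constant, one can read the inequality off the factorization \eqref{eq:def}, $\mathcal{F}_{\alpha}f=A_{\alpha}\mathcal{M}_{\alpha}\mathcal{F}[\mathcal{M}_{\alpha}f](x\csc\alpha)$. Since $|\mathcal{M}_{\alpha}\phi|=|\phi|$ pointwise, the chirp multiplications are isometries on every $L^{q}$; the classical Hausdorff--Young inequality gives $\|\mathcal{F}[\mathcal{M}_{\alpha}f]\|_{p'}\le\|\mathcal{M}_{\alpha}f\|_{p}=\|f\|_{p}$; and the dilation by $\csc\alpha$ multiplies the $L^{p'}$ norm by $|\sin\alpha|^{1/p'}$. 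Combining these with the elementary computation $|A_{\alpha}|=|1-i\cot\alpha|^{1/2}=|\csc\alpha|^{1/2}=|\sin\alpha|^{-1/2}$ produces the factor $|A_{\alpha}|\,|\sin\alpha|^{1/p'}=|\sin\alpha|^{1/p'-1/2}=|\sin\alpha|^{1/2-1/p}=|A_{\alpha}|^{2/p-1}$, again matching \eqref{HYineq}. I expect the only genuine obstacle to be the consistency check of the two definitions on $L^{1}\cap L^{2}$; everything else is bookkeeping with norms under the operations in \eqref{eq:def}.
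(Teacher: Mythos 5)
Your proof is correct and takes essentially the same route as the paper: Riesz--Thorin interpolation between the $L^{1}\to L^{\infty}$ bound of Proposition~\ref{th:cont}(i) (norm $|A_{\alpha}|$) and the $L^{2}\to L^{2}$ isometry of Theorem~\ref{prop:L2}(ii), with the same exponent bookkeeping yielding $|A_{\alpha}|^{2/p-1}$. Your consistency check of the two definitions on $L^{1}\cap L^{2}$ and the alternative derivation from the factorization \eqref{eq:def} are careful additions that the paper's terse proof omits, but the core argument is identical.
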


\begin{proof}
By Proposition~\ref{th:cont} (i)  $\mathcal{F}%
_{\alpha}$ maps $L^1$ to $L^\infty$ (with norm bounded by $A_\alpha$) and Theorem \ref{prop:L2} (ii), it maps
$L^2$ to $L^2$ with (with norm $1$).
It follows from the   Riesz-Thorin interpolation theorem
Hausdorff-Young inequality \eqref{HYineq} holds.
\end{proof}

FRFT inversion also holds on $L^{p}(\mathbb{R})$ ($1<p<2$) and this can be proved by an
argument similar to that for  $L^{1}(\mathbb{R})$
via the use of Theorems \ref{th:id1}-\ref{th:id2}. We won't go into much detail here.

\section{Multiplier theory and Littlewood-Paley theorem associated with the FRFT}
\label{sect:multiplier}
\subsection{Fractional Fourier transform multipliers}

 Fourier multipliers play an important role in
operator theory, partial differential equations, and harmonic analysis.
In this section, we   study some basic multiplier theory results in the
FRFT context.

\begin{definition}
\label{def:ma}Let $1\leq p\leq\infty$ and $m_{\alpha}\in L^{\infty}%
(\mathbb{R})$. Define the operator $T_{m_{\alpha}}$ as%
\[
\mathcal{F}_{\alpha}\left(  T_{m_{\alpha}}f\right)  \left(  x\right)
=m_{\alpha}\left(  x\right)  \left(  \mathcal{F}_{\alpha}f\right)  \left(
x\right)  ,\quad\forall f\in L^{2}(\mathbb{R})\cap L^{p}(\mathbb{R}).
\]
The function $m_{\alpha}$ is called \emph{the }$L^{p}$\emph{ Fourier
multiplier of order }$\alpha$, if there exist a constant $C_{p,\alpha}>0$ such
that
\begin{equation}
\left\Vert T_{m_{\alpha}}f\right\Vert _{p}\leq C_{p,\alpha}\left\Vert
f\right\Vert _{p},\quad\forall f\in L^{2}(\mathbb{R})\cap L^{p}(\mathbb{R}).
\label{eq:ma}%
\end{equation}
\end{definition}

\noindent As $L^{2}(\mathbb{R})\cap L^{p}(\mathbb{R})$ is dense in $L^{p}%
(\mathbb{R})$, there is a unique bounded extension of $T_{m_{\alpha}}$  in
$L^{p}(\mathbb{R})$ satisfying (\ref{eq:ma}). This extension is also denoted by $T_{m_{\alpha}}$. Define
\[
\left\Vert m_{\alpha}\right\Vert =\left\Vert T_{m_{\alpha}}\right\Vert
_{(p.p)}:=\sup_{\substack{f\in L^{2}\cap L^{p}\\\left\Vert f\right\Vert
_{p}\leq1}}\left\Vert \mathcal{F}_{-\alpha}\left[  m_{\alpha}\left(
\mathcal{F}_{\alpha}f\right)  \right]  \right\Vert _{p}.
\]

In view of Definition \ref{def:ma}, many important fractional integral
operators can be expressed in terms of fractional $L^{p}$ multiplier.

\begin{figure}[htb]
\centering
\begin{tikzpicture}[>=stealth,thick]

\draw [->,dashed,very thick, draw=blue!70!black,rotate around y=55] (-2.5,0.5,0) -- (2.5,-0.5,0) node [at end, right] {$\omega$};
\draw [->,very thick] (0,-3,0) -- (0,3,0) node [at end, left] {Im $t$};
\draw [->,very thick] (0,0,-3) -- (0,0,3) node [at end, left] {Re $t$};

\begin{scope}[z={(-135:7.4mm)},x={(-10:8.4mm)}]
\draw [fill=blue!20,draw=blue!70!red]  (1,0,0) -- (1,0,1) -- (2.5,0,1)--(2.5,0,0);
\draw [draw=blue!50!green,fill=blue!50!green!20] (-1,0,0) -- (-1,0,1) -- (-2.5,0,1)--(-2.5,0,0);
\draw [->,,very thick] (-3,0,0) -- (3,0,0) node [at end, right] {$\omega'$};

\begin{scope}[canvas is zx plane at y=0]
\draw[->,thick,red] (0,1.5) arc (80:170:0.5);
\end{scope}

\end{scope}

\node[red] at (1.3,0,0)[right] {\tiny $\frac{\pi}{2}-\alpha$};

\node at (0,-3,0)[below]{\footnotesize (a) the original signal: $\mathcal F_\alpha (u)(\omega')$};

\node[draw=red!50!black,fill=red!20,minimum width=0.2mm, minimum height=8mm, single arrow,single arrow head extend=1.5mm, single arrow head indent=.4mm] at (4.2,0,0) {$\quad$};
\end{tikzpicture}~~
\begin{tikzpicture}[>=stealth,thick]
\draw [->,dashed,very thick, draw=blue!70!black,rotate around y=55] (-2.5,0.5,0) -- (2.5,-0.5,0) node [at end, right] {$\omega$};
\draw [->,very thick] (0,-3,0) -- (0,3,0) node [at end, left] {Im $t$};
\draw [->,very thick] (0,0,-3) -- (0,0,3) node [at end, left] {Re $t$};
\begin{scope}[z={(-135:5mm)},x={(-10:8.4mm)},y={(90:9mm)}]
\draw [fill=blue!20,draw=blue!70!red,rotate around x=90]  (1,0,0) -- (1,0,1) -- (2.5,0,1)--(2.5,0,0);
\draw [draw=blue!50!green,fill=blue!50!green!20,rotate around x=-90] (-1,0,0) -- (-1,0,1) -- (-2.5,0,1)--(-2.5,0,0);
\draw [->,very thick] (-3,0,0) -- (3,0,0) node [at end, right] {$\omega'$};

\begin{scope}[canvas is zx plane at y=0]
\draw[->,thick,red] (0,1.5) arc (75:160:0.7);
\end{scope}

\end{scope}

\node[red] at (1.3,0,0)[right] {\tiny $\frac{\pi}{2}-\alpha$};

\node at (0,-3,0)[below]{\footnotesize (b) after Hilbert transform: $\mathcal F_\alpha (\mathcal H_{\alpha}u)(\omega')$};
\end{tikzpicture}

\caption{Hilbert transform of order $\alpha$ in frequency domain}%
\label{fig:HT}%
\end{figure}
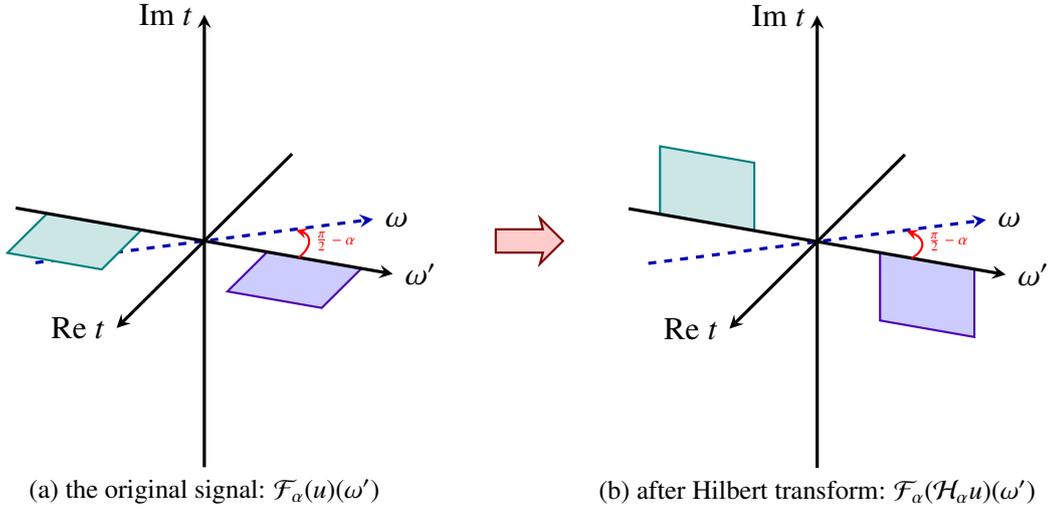

\begin{example}
Recall that the classical Hilbert transform is defined as
\begin{equation}
(\mathcal H u)\left(  \omega\right)  = \mathrm{p.v.~}\frac{1}{\pi} \int_{-\infty}^{+\infty}\frac{u(t)}%
{\omega -t}\mathrm{d}t.\label{eq:CHT}%
\end{equation}
The Hilbert transform of order $\alpha$ is defined as (cf., \cite{zayed})%
\begin{equation}
(\mathcal H_{\alpha}u)\left(  \omega'\right)  =\mathrm{p.v.~} e^{-i\pi\omega'^2%
\cot\alpha}  \frac{1}{\pi} \int_{-\infty}^{+\infty}\frac{u(t)e^{i\pi t^{2}\cot\alpha}}%
{\omega' -t}\mathrm{d}t.\label{eq:HT}%
\end{equation}
For $1<p<\infty$, the operator $\mathcal H_{\alpha}$ is bounded from $L^{p}%
(\mathbb{R})$ to $L^{p}(\mathbb{R})$. By \cite[Theorem 4]{zayed}, we see that
$m_{\alpha}=-i\mathrm{sgn}\left(  (\pi-\alpha)\omega'\right)  $ is a
fractional $L^{p}$ multiplier and the associated operator $T_{m_{\alpha}}$ is
the fractional Hilbert transform, that is,%
\begin{equation}
(\mathcal{F}_{\alpha}\mathcal H_{\alpha}u)\left(  \omega'\right)
=-i\mathrm{sgn}\left(  (\pi-\alpha)\omega'\right)  \left(
\mathcal{F}_{\alpha}u\right)  \left(  \omega'\right)  .\label{eq:mHT}%
\end{equation}
Without loss of generality, assume that $\alpha\in(0,\pi)$. It can be seen from
(\ref{eq:mHT}) that the Hilbert transform of order $\alpha$ is a phase-shift
converter that multiplies the positive frequency portion of the original
signal by $-i$, that is, maintaining the same amplitude, shifts the phase by
$-\pi/2$, while the negative frequency portion is shifted by $\pi/2$. As
shown in Fig. \ref{fig:HT}.
\end{example}

\begin{example}
Let $m_{\alpha}=e^{-2\pi\varepsilon\left\vert \csc\alpha\right\vert \left\vert
x\right\vert }$. Then the corresponding operator $T_{m_{\alpha}}$ is the fractional
Poisson integral (see Definition \ref{def:PGW}). In view of the Young's
inequality and Lamma \ref{lm1}, we know that $m_{\alpha}$ is a fractional
$L^{p}$ multiplier for $1\leq p<\infty$. Similarly, the fractional
Gauss-Weierstrass integral is the operator $T_{m_{a}}$ associated with the
fractional $L^{p}$ multiplier $m_{\alpha}=e^{-4\pi^{2}\varepsilon x^{2}%
\csc^{2}\alpha}$.
\end{example}

\begin{example}
Let $a,b\in\mathbb{R}$ and $a<b$. Denote the characteristic function of the
interval $[a,b]$ by $\chi_{\lbrack a,b]}$. Later, in the proof of
Littlewood-Paley theorem (Theorem \ref{th:L-P}), equality (\ref{eq:L-P1}) will
show that $\chi_{[ a,b]}$ is a $L^{p}$ ($1<p<\infty$) multiplier in the
FRFT context. The associated operator $T_{\chi_{[ a,b]}}$ acting on a signal $u$ is equivalent to making a truncation in the frequency domain of the original signal.
\end{example}

 The following theorem provides a sufficient condition for
judging $L^{p}$ multiplier, which is the H\"{o}rmander-Mikhlin multiplier
theorem in the fractional setting.

\begin{theorem}
Let $m_{\alpha}$ be a bounded function. If there exists a constant $B>0$ such
that one of the following condition holds:

\begin{enumerate}
[(a)]

\item (Mikhlin's condition)%
\begin{equation}
\left\vert \frac{\mathrm{d}}{\mathrm{d}x}m_{\alpha}(x)\right\vert \leq
B\left\vert x\right\vert ^{-1}; \label{eq:M}%
\end{equation}

\item (H\"{o}rmander's condition)
\begin{equation}
\sup_{R>0}\frac{1}{R}\int_{R<\left\vert x\right\vert <2R}\left\vert
\frac{\mathrm{d}}{\mathrm{d}x}m_{\alpha}(x)\right\vert ^{2}\mathrm{d}x\leq
B^{2}. \label{eq:MH}%
\end{equation}

\end{enumerate}
Then $m_{\alpha}$ is a fractional $L^{p}$ multiplier for $1<p<\infty$, that
is, there exist a constant $C>0$ such that
\[
\left\Vert T_{m_{\alpha}}f\right\Vert _{p}=\left\Vert \mathcal{F}_{-\alpha
}\left[  m_{\alpha}\left(  \mathcal{F}_{\alpha}f\right)  \right]  \right\Vert
_{p}\leq C\left\Vert f\right\Vert _{p},\quad\forall f\in L^{p}(\mathbb{R}).
\]

\end{theorem}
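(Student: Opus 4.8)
The plan is to reduce the fractional multiplier theorem to the classical Hörmander-Mikhlin theorem via the chirp decomposition \eqref{eq:def}, exactly as the Hausdorff-Young inequality was reduced to the classical case. Recall that for $f \in L^2 \cap L^p$ the operator is defined by $T_{m_\alpha} f = \mathcal{F}_{-\alpha}[m_\alpha (\mathcal{F}_\alpha f)]$. Using \eqref{eq:def} one writes $\mathcal{F}_\alpha f = A_\alpha \mathcal{M}_\alpha \mathcal{F}[\mathcal{M}_\alpha f](x \csc\alpha)$, and similarly expands $\mathcal{F}_{-\alpha}$. The key observation is that the chirp factors $\mathcal{M}_{\pm\alpha}$ are multiplications by unimodular functions $e^{\pm i\pi x^2 \cot\alpha}$, hence isometries on every $L^p(\mathbb{R})$; they do not affect $L^p$ norms. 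The scaling by $\csc\alpha$ is a dilation, which on $L^p$ merely contributes a fixed power of $|\csc\alpha|$ that cancels between $\mathcal{F}_\alpha$ and its inverse. So after peeling off all chirps and dilations, I expect $T_{m_\alpha}$ to be conjugate (up to $L^p$-bounded isometries and a harmless dilation) to a \emph{classical} Fourier multiplier operator $T_{\widetilde m}$ with symbol $\widetilde m(\xi) = m_\alpha(\xi \sin\alpha)$.

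First I would carry out this conjugation carefully at the level of $L^2 \cap L^p$ functions, tracking each factor, to establish an identity of the schematic form
\[
T_{m_\alpha} f = \mathcal{M}_{-\alpha}\, S_{1/\csc\alpha}\, T_{\widetilde m}\, S_{\csc\alpha}\, \mathcal{M}_{\alpha} f,
\]
where $S_\lambda g(x) = g(\lambda x)$ denotes dilation and $T_{\widetilde m}$ is the ordinary Fourier multiplier with symbol $\widetilde m(\xi) = m_\alpha(\xi/\csc\alpha) = m_\alpha(\xi \sin\alpha)$. Since the chirps are $L^p$-isometries and dilations are bounded on $L^p$ with norms that are reciprocal powers of $|\csc\alpha|$ and thus cancel, this gives $\|T_{m_\alpha} f\|_p = \|T_{\widetilde m} (S_{\csc\alpha}\mathcal{M}_\alpha f)\|_p$ up to the dilation constant, reducing everything to the classical bound $\|T_{\widetilde m} h\|_p \le C \|h\|_p$.

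Second I would verify that $\widetilde m$ inherits the Mikhlin or Hörmander hypothesis from $m_\alpha$. By the chain rule, $\frac{d}{d\xi}\widetilde m(\xi) = \sin\alpha \cdot m_\alpha'(\xi\sin\alpha)$. Under Mikhlin's condition \eqref{eq:M}, $|m_\alpha'(\xi\sin\alpha)| \le B|\xi\sin\alpha|^{-1}$, so $|\frac{d}{d\xi}\widetilde m(\xi)| \le B|\xi|^{-1}$, i.e.\ $\widetilde m$ satisfies the classical Mikhlin condition with the \emph{same} constant $B$; the factor $\sin\alpha$ cancels precisely. Under Hörmander's condition \eqref{eq:MH} a change of variables $x = \xi\sin\alpha$ in the annulus integral shows $\widetilde m$ satisfies the classical Hörmander condition with a constant comparable to $B$. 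Then the classical Hörmander-Mikhlin theorem gives $\|T_{\widetilde m}\|_{(p,p)} \le C$ for $1<p<\infty$, with $C$ depending only on $B$ and $p$, and the conjugation identity transfers this to $T_{m_\alpha}$. A final density argument extends the bound from $L^2 \cap L^p$ to all of $L^p$, as noted after Definition~\ref{def:ma}.

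The main obstacle is bookkeeping rather than conceptual: one must pin down exactly how the scaling $x \mapsto x\csc\alpha$ interacts with the multiplier symbol so that the symbol of the conjugated classical operator is genuinely $m_\alpha(\,\cdot\,\sin\alpha)$, and must confirm that the dilation constants from $\mathcal{F}_\alpha$ and $\mathcal{F}_{-\alpha}$ truly cancel in the $L^p$ norm computation. A subtle point worth checking is that the definition of $T_{m_\alpha}$ via $\mathcal{F}_\alpha(T_{m_\alpha}f) = m_\alpha \mathcal{F}_\alpha f$ is equivalent to the explicit formula $T_{m_\alpha}f = \mathcal{F}_{-\alpha}[m_\alpha \mathcal{F}_\alpha f]$; this uses the group law $\mathcal{F}_{-\alpha}\mathcal{F}_\alpha = \mathcal{I}$ from Theorem~\ref{prop:L2}(iii), valid on $L^2$. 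Once the conjugation identity is established, the rest follows essentially for free from the classical theory, since the degenerate cases $\alpha = n\pi$ are excluded by the presence of $\cot\alpha$ and $\csc\alpha$ and can be handled separately if needed.
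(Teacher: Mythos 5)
Your proposal is correct and takes essentially the same route as the paper: strip the chirps via \eqref{eq:def} so that $T_{m_\alpha}$ becomes the chirp-conjugate of a classical Fourier multiplier with dilated symbol $\widetilde m_\alpha(\xi)=m_\alpha(\xi\sin\alpha)$, check that conditions \eqref{eq:M} and \eqref{eq:MH} are inherited under this dilation (chain rule, same constant $B$), and invoke the classical H\"ormander--Mikhlin theorem, the chirp factors being unimodular and hence harmless on $L^p$. The one slip is the bookkeeping point you yourself flagged: your schematic identity double-counts the scaling, since the correct identity is $T_{m_\alpha}f=\mathcal{M}_{-\alpha}\,\mathcal{F}^{-1}\!\left[\widetilde m_\alpha\,\mathcal{F}(\mathcal{M}_\alpha f)\right]$ with no explicit dilation operators (equivalently, if you keep $S_{\csc\alpha}$ and $S_{\sin\alpha}$ then the inner classical symbol must be $m_\alpha$ itself, because $S_{\sin\alpha}T_{\widetilde m}S_{\csc\alpha}=T_{\widetilde m(\csc\alpha\,\cdot)}=T_{m_\alpha}$), but this does not affect the conclusion, as both symbols satisfy the hypotheses with the same constant.
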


\begin{proof}
In view of the decomposition (\ref{eq:def}), we have%
\[
\mathcal{F}_{\alpha}\left(  T_{m_{\alpha}}f\right)  \left(  x\right)
=A_{\alpha}e^{i\pi x^{2}\cot\alpha}\mathcal{F}[e^{i\pi(\cdot)^{2}\cot\alpha
}\left(  T_{m_{\alpha}}f\right)  ](x\csc\alpha)
\]
and%
\[
m_{\alpha}(x)\left(  \mathcal{F}_{\alpha}f\right)  (x)=m_{\alpha}(x)A_{\alpha
}e^{i\pi x^{2}\cot\alpha}\mathcal{F}[e^{i\pi(\cdot)^{2}\cot\alpha}%
f](x\csc\alpha).
\]
Then%
\[
\mathcal{F}[e^{i\pi(\cdot)^{2}\cot\alpha}\left(  T_{m_{\alpha}}f\right)
](x)=\tilde{m}_{\alpha}(x)\mathcal{F}[e^{i\pi(\cdot)^{2}\cot\alpha}f](x),
\]
where $\tilde{m}_{\alpha}(x)=m_{\alpha}(x\sin\alpha)$. Namely,
\[
T_{m_{\alpha}}f=e^{-i\pi(\cdot)^{2}\cot\alpha}\mathcal{F}^{-1}\left[  \tilde
{m}_{\alpha}\mathcal{F}\left(  e^{i\pi(\cdot)^{2}\cot\alpha}f\right)  \right]
,\quad\forall f\in L^{p}(\mathbb{R}).
\]

It is obvious that $\tilde{m}_{\alpha}$ satisfies (\ref{eq:M}) or
(\ref{eq:MH}) and $g:=e^{i\pi(\cdot)^{2}\cot\alpha}f\in L^{p}(\mathbb{R})$.
Therefore, it follows from the classical H\"{o}rmander-Mihlin multiplier
theorem (cf., \cite{Hor,mih,GL}) that $\tilde{m}_{\alpha} $ is an $L^p$
Fourier multiplier.
Consequently,%
\begin{align*}
\left\Vert T_{m_{\alpha}}f\right\Vert _{p}  &  =\left\Vert e^{-i\pi(\cdot
)^{2}\cot\alpha}\mathcal{F}^{-1}\left[  \tilde{m}_{\alpha}\mathcal{F}\left(
e^{i\pi(\cdot)^{2}\cot\alpha}f\right)  \right]  \right\Vert _{p}\\
&  =\left\Vert \mathcal{F}^{-1}\left[  \tilde{m}_{\alpha}(\mathcal{F}%
g)\right]  \right\Vert _{p}\\
&  \leq C\left\Vert g\right\Vert _{p}=C\left\Vert f\right\Vert _{p}%
\end{align*}
for some positive constant $C$.
\end{proof}

The proof of the following two FRFT multiplier theorems is obtained in
a similar fashion.

\begin{theorem}
[Bernstein multiplier theorem]Let $m_{\alpha}\in C^{1}(\mathbb{R}%
\backslash\{0\})$ be bounded. If $\Vert m_{\alpha}^{\prime}\Vert<\infty$, then
there exist constants $C_{1},C_{2}>0$ such that
\[
\left\Vert \mathcal{F}_{-\alpha}\left[  m_{\alpha}\left(  \mathcal{F}_{\alpha
}f\right)  \right]  \right\Vert _{p}\leq C_{1}\left\Vert f\right\Vert _{p},
\]
for $f\in L^{p}(\mathbb{R})$ ($1\leq p<\infty$), and%
\[
\left\Vert m_{\alpha}\right\Vert ^{2}\leq C_{2}\Vert m_{\alpha}\Vert_{2}\Vert
m_{\alpha}^{\prime}\Vert_{2}.
\]

\end{theorem}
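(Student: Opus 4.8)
The plan is to imitate the proof of the preceding (H\"{o}rmander--Mikhlin) multiplier theorem: I would conjugate $T_{m_{\alpha}}$ by chirps to turn it into an \emph{ordinary} Fourier multiplier operator, and then invoke the \emph{classical} Bernstein multiplier theorem for the resulting symbol. The case $\alpha=n\pi$ is trivial, since there $\mathcal{F}_{\alpha}$ equals $\pm\mathcal{I}$ or $\pm\mathcal{P}$ and $T_{m_{\alpha}}$ reduces to a genuine Fourier multiplier; so throughout I assume $\alpha\neq n\pi$, whence $\sin\alpha\neq0$.

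First I would reuse the factorization already obtained in the previous proof. Setting $\tilde{m}_{\alpha}(x)=m_{\alpha}(x\sin\alpha)$, one has
\[
T_{m_{\alpha}}f=\mathcal{M}_{-\alpha}\,\mathcal{F}^{-1}\!\left[\tilde{m}_{\alpha}\,\mathcal{F}(\mathcal{M}_{\alpha}f)\right],\qquad f\in L^{p}(\mathbb{R}).
\]
Because $\mathcal{M}_{\pm\alpha}$ is multiplication by the unimodular function $e^{\pm i\pi x^{2}\cot\alpha}$, it is an isometry of every $L^{p}(\mathbb{R})$. Hence, writing $g:=\mathcal{M}_{\alpha}f$ (so $\|g\|_{p}=\|f\|_{p}$), the boundedness of $T_{m_{\alpha}}$ on $L^{p}$ is equivalent to that of the classical operator $T_{\tilde{m}_{\alpha}}g=\mathcal{F}^{-1}[\tilde{m}_{\alpha}\mathcal{F}g]$, and the two multiplier norms coincide, $\|m_{\alpha}\|=\|T_{\tilde{m}_{\alpha}}\|_{(p,p)}$.

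Next I would transfer the hypotheses to $\tilde{m}_{\alpha}$. As a nonzero dilate of $m_{\alpha}$, $\tilde{m}_{\alpha}$ is bounded and lies in $C^{1}(\mathbb{R}\setminus\{0\})$; the chain rule gives $\tilde{m}_{\alpha}'(x)=\sin\alpha\,m_{\alpha}'(x\sin\alpha)$, and a change of variables yields
\[
\|\tilde{m}_{\alpha}\|_{2}=|\sin\alpha|^{-1/2}\|m_{\alpha}\|_{2},\qquad \|\tilde{m}_{\alpha}'\|_{2}=|\sin\alpha|^{1/2}\|m_{\alpha}'\|_{2}.
\]
Thus $\tilde{m}_{\alpha},\tilde{m}_{\alpha}'\in L^{2}$ (reading the standing hypothesis as $\|m_{\alpha}'\|=\|m_{\alpha}'\|_{2}<\infty$, with $m_{\alpha}\in L^{2}$ needed for the asserted right-hand side to be finite), and the classical Bernstein theorem applies to $\tilde{m}_{\alpha}$: it is an $L^{p}$ Fourier multiplier for $1\le p\le\infty$, with $\|T_{\tilde{m}_{\alpha}}\|_{(p,p)}^{2}\le C\,\|\tilde{m}_{\alpha}\|_{2}\,\|\tilde{m}_{\alpha}'\|_{2}$. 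The only genuine analytic content sits in this classical estimate, which comes from splitting $\int|\mathcal{F}^{-1}\tilde{m}_{\alpha}|=\int_{|x|\le R}+\int_{|x|>R}$, bounding each piece by Cauchy--Schwarz and Plancherel (using $\|x\,\mathcal{F}^{-1}\tilde{m}_{\alpha}\|_{2}=(2\pi)^{-1}\|\tilde{m}_{\alpha}'\|_{2}$) and optimizing in $R$, which shows $\mathcal{F}^{-1}\tilde{m}_{\alpha}\in L^{1}$ with $\|T_{\tilde{m}_{\alpha}}\|_{(p,p)}\le\|\mathcal{F}^{-1}\tilde{m}_{\alpha}\|_{1}\le C\|\tilde{m}_{\alpha}\|_{2}^{1/2}\|\tilde{m}_{\alpha}'\|_{2}^{1/2}$.

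Finally I would assemble the two conclusions. The $L^{p}$ boundedness of $T_{\tilde{m}_{\alpha}}$ transfers back through the chirp isometries, so $T_{m_{\alpha}}$ is bounded on $L^{p}$ for $1\le p<\infty$ with $C_{1}=\|m_{\alpha}\|<\infty$, giving the first inequality. For the second, combining $\|m_{\alpha}\|=\|T_{\tilde{m}_{\alpha}}\|_{(p,p)}$ with the Bernstein estimate and the norm relations gives
\[
\|m_{\alpha}\|^{2}=\|T_{\tilde{m}_{\alpha}}\|_{(p,p)}^{2}\le C\,\|\tilde{m}_{\alpha}\|_{2}\,\|\tilde{m}_{\alpha}'\|_{2}=C\,\|m_{\alpha}\|_{2}\,\|m_{\alpha}'\|_{2},
\]
since the dilation factors $|\sin\alpha|^{\mp1/2}$ cancel exactly. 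I expect no real obstacle: the reduction is identical to the one already carried out for the H\"{o}rmander--Mikhlin theorem, and the only point requiring care is recording that the scaling constants cancel so that the final bound is independent of $\alpha$.
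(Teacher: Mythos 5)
Your proposal is correct and is essentially the paper's own argument: the paper gives no separate proof of the Bernstein theorem, stating only that it ``is obtained in a similar fashion'' to the H\"{o}rmander--Mikhlin proof, i.e.\ by the same chirp-conjugation reduction $T_{m_{\alpha}}=\mathcal{M}_{-\alpha}\,\mathcal{F}^{-1}[\tilde{m}_{\alpha}\,\mathcal{F}(\mathcal{M}_{\alpha}\,\cdot\,)]$ with $\tilde{m}_{\alpha}(x)=m_{\alpha}(x\sin\alpha)$ followed by the classical Bernstein theorem. Your write-up supplies the details the paper omits --- notably the dilation bookkeeping $\|\tilde{m}_{\alpha}\|_{2}\,\|\tilde{m}_{\alpha}'\|_{2}=\|m_{\alpha}\|_{2}\,\|m_{\alpha}'\|_{2}$, which makes the final constant independent of $\alpha$ --- and is a faithful completion of the intended proof.
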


\begin{theorem}
[Marcinkiewicz multiplier theorem]Let $m_{\alpha}\in L^{\infty}(\mathbb{R}%
)\cap C^{1}(\mathbb{R}\backslash\{0\})$. If there exist a constant $B>0$ such
that
\[
\sup_{I\in\mathcal{I}}\int_{I}\left\vert \frac{\mathrm{d}}{\mathrm{d}%
x}m_{\alpha}(x)\right\vert \mathrm{d}x\leq B,
\]
where $\mathcal{I}:\mathcal{=}\{[2^{j},2^{j+1}],[-2^{j+1},-2^{j}%
]\}_{j\in\mathbb{Z}}$ is the set of binary intervals in $\mathbb{R}$, then,
for $f\in L^{p}(\mathbb{R})$ ($1<p<\infty$), there exist a constant $C>0$ such
that
\[
\left\Vert \mathcal{F}_{-\alpha}\left[  m_{\alpha}\left(  \mathcal{F}_{\alpha
}f\right)  \right]  \right\Vert _{p}\leq C\left\Vert f\right\Vert _{p}.
\]

\end{theorem}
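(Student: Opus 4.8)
The plan is to mirror the reduction used in the proof of the H\"ormander--Mikhlin theorem above, transferring the question to a classical Fourier multiplier problem and then invoking the classical Marcinkiewicz theorem. First I would use the decomposition (\ref{eq:def}) exactly as before to write
\[
T_{m_{\alpha}}f=e^{-i\pi(\cdot)^{2}\cot\alpha}\mathcal{F}^{-1}\left[\tilde{m}_{\alpha}\mathcal{F}\left(e^{i\pi(\cdot)^{2}\cot\alpha}f\right)\right],\qquad\tilde{m}_{\alpha}(x)=m_{\alpha}(x\sin\alpha).
\]
Since $|e^{i\pi x^{2}\cot\alpha}|=1$, multiplication by the chirp is an isometry on $L^{p}(\mathbb{R})$, so setting $g:=e^{i\pi(\cdot)^{2}\cot\alpha}f$ one has $\|T_{m_{\alpha}}f\|_{p}=\|\mathcal{F}^{-1}[\tilde{m}_{\alpha}(\mathcal{F}g)]\|_{p}$ and $\|g\|_{p}=\|f\|_{p}$. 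Thus it suffices to show that $\tilde{m}_{\alpha}$ is a classical $L^{p}$ Fourier multiplier, for which I would verify that $\tilde{m}_{\alpha}$ satisfies the classical dyadic Marcinkiewicz condition.

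The key step is to check that condition for $\tilde{m}_{\alpha}$. Since $\tilde{m}_{\alpha}'(x)=\sin\alpha\, m_{\alpha}'(x\sin\alpha)$, for a dyadic interval $J=[2^{j},2^{j+1}]$ the substitution $u=x\sin\alpha$ gives (taking $\sin\alpha>0$ for concreteness)
\[
\int_{J}\left|\tilde{m}_{\alpha}'(x)\right|\mathrm{d}x=\int_{2^{j}\sin\alpha}^{2^{j+1}\sin\alpha}\left|m_{\alpha}'(u)\right|\mathrm{d}u.
\]
The image $[2^{j}\sin\alpha,2^{j+1}\sin\alpha]$ has endpoints in ratio $2$ but is in general not itself dyadic; writing $a:=2^{j}\sin\alpha$ and choosing $k$ with $2^{k}\le a<2^{k+1}$, one has $[a,2a]\subset[2^{k},2^{k+1}]\cup[2^{k+1},2^{k+2}]$, so the integral is bounded by $2B$. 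The case $\sin\alpha<0$ and the negative dyadic intervals are handled identically, using that the hypothesis controls both families $[2^{j},2^{j+1}]$ and $[-2^{j+1},-2^{j}]$. Hence $\tilde{m}_{\alpha}$ satisfies the Marcinkiewicz condition with constant at most $2B$, and clearly $\tilde{m}_{\alpha}\in L^{\infty}(\mathbb{R})\cap C^{1}(\mathbb{R}\setminus\{0\})$.

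With the dyadic condition verified, the classical Marcinkiewicz multiplier theorem yields a constant $C>0$, depending only on $p$ and $B$, with $\|\mathcal{F}^{-1}[\tilde{m}_{\alpha}(\mathcal{F}g)]\|_{p}\le C\|g\|_{p}$ for all $1<p<\infty$; combining this with the chirp isometry gives $\|T_{m_{\alpha}}f\|_{p}\le C\|f\|_{p}$, first on the dense subspace $L^{2}(\mathbb{R})\cap L^{p}(\mathbb{R})$ and then on all of $L^{p}(\mathbb{R})$ by density. The only genuinely non-automatic point---and the one I expect to be the main obstacle---is that the dyadic structure underlying the Marcinkiewicz condition is \emph{not} preserved by the dilation $x\mapsto x\sin\alpha$ (unlike the pointwise, scale-covariant Mikhlin condition used above); this is precisely why the covering of a scaled dyadic interval by two consecutive dyadic intervals is needed to keep the constant uniform in $j$.
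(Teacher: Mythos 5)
Your proof is correct and follows essentially the same route as the paper, which disposes of this theorem by asserting it is proved ``in a similar fashion'' to its H\"ormander--Mikhlin theorem, i.e., via the chirp/dilation reduction $T_{m_{\alpha}}f=e^{-i\pi(\cdot)^{2}\cot\alpha}\mathcal{F}^{-1}\bigl[\tilde{m}_{\alpha}\mathcal{F}\bigl(e^{i\pi(\cdot)^{2}\cot\alpha}f\bigr)\bigr]$ with $\tilde{m}_{\alpha}(x)=m_{\alpha}(x\sin\alpha)$, followed by the classical Marcinkiewicz theorem. Your covering argument (a dilated dyadic interval sits inside two consecutive dyadic intervals, giving the constant $2B$) supplies the one detail the paper leaves implicit; it is indeed needed here, since unlike the scale-invariant Mikhlin and H\"ormander conditions, the dyadic Marcinkiewicz condition is not preserved by the dilation $x\mapsto x\sin\alpha$.
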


\subsection{Littlewood-Paley theorem in the FRFT context}

In this subsection we
study the Littlewood-Paley theorem in the FRFT context. The Littlewood-Paley
is not only a powerful tool in Fourier analysis, but also plays an important
role in other areas, such as  partial
differential equations.

Let $j\in\mathbb{Z}$. Define the binary intervals in $\mathbb{R}$ as%
\[
\left\{
\begin{array}
[c]{cc}%
I_{j}^{\alpha}:=[2^{j}\sin\alpha,2^{j+1}\sin\alpha],-I_{j}^{\alpha}%
:=[-2^{j+1}\sin\alpha,-2^{j}\sin\alpha], & \alpha\in(0,\pi),\\
I_{j}^{\alpha}:=[2^{j+1}\sin\alpha,2^{j}\sin\alpha],-I_{j}^{\alpha}%
:=[-2^{j}\sin\alpha,-2^{j+1}\sin\alpha], & \alpha\in(\pi,2\pi).
\end{array}
\right.
\]
Then those binary intervals$\mathbb{\ }$internally disjoint and%
\[
\mathbb{R}\backslash\{0\}=\bigcup\limits_{j\in\mathbb{Z}}(-I_{j}^{\alpha}\cup
I_{j}^{\alpha}).
\]
Let $\mathcal{I}_{\alpha}:=\{I_{j}^{\alpha},-I_{j}^{\alpha}\}_{j\in\mathbb{Z}%
}$. Define the partial summation operator $S_{\rho_{\alpha}}$ corresponding to
$\rho_{\alpha}\in\mathcal{I}_{\alpha}$ by%
\[
\mathcal{F}_{\alpha}(S_{\rho_{\alpha}}f)(x)=\chi_{\rho_{\alpha}}(x)\left(
\mathcal{F}_{\alpha}f\right)  (x),\quad\forall f\in L^{2}(\mathbb{R})\cap
L^{p}(\mathbb{R}),
\]
where $\chi_{\rho_{\alpha}}$ denote the characteristic function of the
interval $\rho_{\alpha}$. It is obvious that%

\begin{equation}
\sum_{\rho_{\alpha}\in\mathcal{I}_{\alpha}}\left\Vert S_{\rho_{\alpha}%
}(f)\right\Vert _{2}^{2}=\left\Vert f\right\Vert _{2}^{2},\quad\forall f\in
L^{2}(\mathbb{R}). \label{eq:Sp}%
\end{equation}
For general $L^{p}(\mathbb{R})$ functions, we have the following result, which
is the Littlewood-Paley theorem in fractional setting.

\begin{theorem}
\label{th:L-P}Let $f\in L^{p}(\mathbb{R})$, $1<p<\infty$. Then%
\[
\left(  \sum_{\rho_{\alpha}\in\mathcal{I}_{\alpha}}\left\vert S_{\rho_{\alpha
}}(f)\right\vert ^{2}\right)  ^{1/2}\in L^{p}(\mathbb{R})
\]
and there exists constants $C_{1},C_{2}>0$ independent of $f$ such that%
\[
C_{1}\left\Vert f\right\Vert _{p}\leq\left\Vert \left(  \sum_{\rho_{\alpha}%
\in\mathcal{I}_{\alpha}}\left\vert S_{\rho_{\alpha}}(f)\right\vert
^{2}\right)  ^{1/2}\right\Vert _{p}\leq C_{2}\left\Vert f\right\Vert _{p}.
\]

\end{theorem}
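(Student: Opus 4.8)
The plan is to transfer the classical Littlewood--Paley theorem to the fractional setting through the chirp decomposition (\ref{eq:def}), exploiting the fact that the chirp factors are unimodular and hence isometries on every $L^p(\mathbb{R})$. The starting point is the observation that each partial summation operator $S_{\rho_{\alpha}}$ is precisely the fractional multiplier operator $T_{\chi_{\rho_{\alpha}}}$ of Definition~\ref{def:ma}. Therefore the conjugation identity derived in the proof of the H\"{o}rmander--Mikhlin theorem applies with $m_{\alpha}=\chi_{\rho_{\alpha}}$, yielding
\begin{equation}
S_{\rho_{\alpha}}f=\mathcal{M}_{-\alpha}\,\mathcal{F}^{-1}\!\big[\widetilde{\chi}_{\rho_{\alpha}}\,\mathcal{F}(\mathcal{M}_{\alpha}f)\big],\qquad \widetilde{\chi}_{\rho_{\alpha}}(x)=\chi_{\rho_{\alpha}}(x\sin\alpha). \label{eq:L-P1}
\end{equation}
I would first establish (\ref{eq:L-P1}) directly from (\ref{eq:def}), exactly as in that earlier argument.

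The combinatorial heart of the proof is to check that the dilation $x\mapsto x\sin\alpha$ maps $\mathcal{I}_{\alpha}$ bijectively onto the ordinary dyadic family $\{[2^{j},2^{j+1}],\,[-2^{j+1},-2^{j}]\}_{j\in\mathbb{Z}}$, so that $\widetilde{\chi}_{\rho_{\alpha}}$ is the characteristic function of a genuine dyadic interval. For $\alpha\in(0,\pi)$ one has $\sin\alpha>0$ and a direct substitution gives $\widetilde{\chi}_{I_{j}^{\alpha}}=\chi_{[2^{j},2^{j+1}]}$; for $\alpha\in(\pi,2\pi)$ one has $\sin\alpha<0$, and the reversed endpoints built into the definition of $I_{j}^{\alpha}$ are exactly what is needed for the same identity to persist after the orientation-reversing dilation (and likewise for $-I_{j}^{\alpha}$). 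Consequently the right-hand side of (\ref{eq:L-P1}) is, up to the outer chirp $\mathcal{M}_{-\alpha}$, the classical dyadic partial summation operator applied to $\mathcal{M}_{\alpha}f$.

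Because $|\mathcal{M}_{\pm\alpha}g(x)|=|g(x)|$ pointwise, identity (\ref{eq:L-P1}) forces $|S_{\rho_{\alpha}}f(x)|=|S^{\mathrm{cl}}_{\widetilde{\rho}}(\mathcal{M}_{\alpha}f)(x)|$ at every $x$, where $S^{\mathrm{cl}}_{\widetilde{\rho}}$ is the classical dyadic summation operator attached to the dyadic interval $\widetilde{\rho}$ corresponding to $\rho_{\alpha}$. Summing in $\rho_{\alpha}$ and using the bijection of the preceding paragraph,
\[
\left\Vert\Big(\sum_{\rho_{\alpha}\in\mathcal{I}_{\alpha}}|S_{\rho_{\alpha}}f|^{2}\Big)^{1/2}\right\Vert_{p}=\left\Vert\Big(\sum_{\widetilde{\rho}}|S^{\mathrm{cl}}_{\widetilde{\rho}}(\mathcal{M}_{\alpha}f)|^{2}\Big)^{1/2}\right\Vert_{p},
\]
the sum on the right running over all classical dyadic intervals. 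The classical Littlewood--Paley theorem now bounds the right-hand side above and below by multiples of $\Vert\mathcal{M}_{\alpha}f\Vert_{p}$ and simultaneously gives membership of the square function in $L^{p}$. Finally $|\mathcal{M}_{\alpha}f|=|f|$ yields $\Vert\mathcal{M}_{\alpha}f\Vert_{p}=\Vert f\Vert_{p}$, so the two-sided estimate transfers verbatim to $f$. The only point requiring care is the interval bookkeeping of the second paragraph---verifying that the $\sin\alpha$-dilation respects orientation in both sign regimes so that $\mathcal{I}_{\alpha}$ is carried onto the true dyadic decomposition of $\mathbb{R}\setminus\{0\}$; once that is settled, the result is an immediate transfer of the classical theorem through the norm-preserving chirp conjugation.
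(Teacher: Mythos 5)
Your proposal is correct, and it reaches the classical Littlewood--Paley theorem by the same underlying mechanism as the paper (chirp conjugation combined with the $\sin\alpha$-dilation), but through a genuinely shorter route. The paper does not invoke the general multiplier conjugation identity; instead it writes $\chi_{\rho_{\alpha}}$ as a difference of sign functions, pushes translations and modulations through the decomposition (\ref{eq:def}), and arrives at the representation (\ref{eq:L-P1}) of the partial sum operator as a difference of modulated fractional Hilbert transforms,
\[
S_{\rho_{\alpha}}(f)=\frac{i}{2}\left[e^{2\pi ia(\cdot)}\mathcal{H}_{\alpha}\left(e^{-2\pi ia(\cdot)}f\right)-e^{2\pi ib(\cdot)}\mathcal{H}_{\alpha}\left(e^{-2\pi ib(\cdot)}f\right)\right],
\]
which it then matches against the classical formula (\ref{eq:L-P2}). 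Since $\mathcal{H}_{\alpha}=\mathcal{M}_{-\alpha}\mathcal{H}\mathcal{M}_{\alpha}$, that representation is equivalent to your identity $S_{\rho_{\alpha}}f=\mathcal{M}_{-\alpha}S^{\mathrm{cl}}_{\widetilde{\rho}}(\mathcal{M}_{\alpha}f)$, so the two proofs rest on the same fact. Your shortcut---specializing the conjugation identity $T_{m_{\alpha}}f=\mathcal{M}_{-\alpha}\mathcal{F}^{-1}\big[\widetilde{m}_{\alpha}\,\mathcal{F}(\mathcal{M}_{\alpha}f)\big]$ from the H\"{o}rmander--Mikhlin proof to $m_{\alpha}=\chi_{\rho_{\alpha}}$---is legitimate, because that derivation is purely algebraic and never uses the smoothness hypotheses (\ref{eq:M}) or (\ref{eq:MH}); what it buys is a proof in which the only real work is the dyadic bookkeeping under the $\sin\alpha$-dilation, which you verify correctly in both sign regimes. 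What the paper's longer computation buys is the explicit fractional Hilbert transform representation itself: it exhibits $\chi_{[a,b]}$ concretely as an FRFT multiplier (as promised in the example preceding the theorem), and it yields the $L^{p}$-boundedness of each $S_{\rho_{\alpha}}$ directly from that of $\mathcal{H}_{\alpha}$. One small point common to both arguments and left implicit in each: $S_{\rho_{\alpha}}$ is defined a priori only on $L^{2}(\mathbb{R})\cap L^{p}(\mathbb{R})$, so the pointwise identity and the two-sided square-function estimate should be established on that dense class and then extended to all of $L^{p}(\mathbb{R})$ by continuity.
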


\begin{proof}
Without loss of generality, suppose that $\alpha\in(0,\pi)$ and $\rho_{\alpha
}=[a_{\alpha},b_{\alpha}]$, where $a_{\alpha}=a\sin\alpha$, $b_{\alpha}%
=b\sin\alpha$ and $a<b$. Then
\[
\chi_{\rho_{\alpha}}(x)=\frac{\mathrm{sgn}(x-a_{\alpha})-\mathrm{sgn}%
(x-b_{\alpha})}{2}.
\]
For $f\in L^{2}(\mathbb{R})\cap L^{p}(\mathbb{R})$, by (\ref{eq:Sp}) we have
\begin{align*}
\mathcal{F}_{\alpha}(S_{\rho_{\alpha}}f)(x)  &  =\chi_{\rho_{\alpha}%
}(x)\left(  \mathcal{F}_{\alpha}f\right)  (x)\\
&  =\frac{i}{2}\left[  \left(  -i\mathrm{sgn}(x-a_{\alpha})\right)  -\left(
-i\mathrm{sgn}(x-b_{\alpha})\right)  \right]  \left(  \mathcal{F}_{\alpha
}f\right)  (x)\\
&  =\frac{i}{2}\left[  \left(  -i\mathrm{sgn}(x-a_{\alpha})\left(
\mathcal{F}_{\alpha}f\right)  (x)\right)  -\left(  -i\mathrm{sgn}(x-b_{\alpha
})\left(  \mathcal{F}_{\alpha}f\right)  (x)\right)  \right] \\
&  =\frac{i}{2}\left[  \tau_{a_{\alpha}}\left(  -i\mathrm{sgn}x\cdot
\tau_{-a_{\alpha}}\left(  \mathcal{F}_{\alpha}f\right)  (x)\right)
-\tau_{b_{\alpha}}\left(  -i\mathrm{sgn}x\cdot\tau_{-b_{\alpha}}\left(
\mathcal{F}_{\alpha}f\right)  (x)\right)  \right]  ,
\end{align*}
where $\tau_{s}f(x)=f(x-s)$. In view of the decomposition (\ref{eq:def}), we
have
\begin{align*}
\tau_{-a_{\alpha}}\left(  \mathcal{F}_{\alpha}f\right)  (x)  &  =A_{\alpha
}\tau_{-a_{\alpha}}\left[  e^{i\pi x^{2}\cot\alpha}\mathcal{F}[e^{i\pi
t^{2}\cot\alpha}f(t)](x\csc\alpha)\right] \\
&  =A_{\alpha}\sin\alpha\cdot\tau_{-a_{\alpha}}\left[  e^{i\pi x^{2}\cot
\alpha}\mathcal{F}[e^{i\pi t^{2}\sin\alpha\cos\alpha}f(t\sin\alpha)](x)\right]
\\
&  =A_{\alpha}\sin\alpha\cdot e^{i\pi(x+a_{\alpha})^{2}\cot\alpha}\cdot
\tau_{-a_{\alpha}}\mathcal{F}[e^{i\pi t^{2}\sin\alpha\cos\alpha}f(t\sin
\alpha)](x)\\
&  =A_{\alpha}\sin\alpha\cdot e^{i\pi(x+a_{\alpha})^{2}\cot\alpha}%
\mathcal{F}[e^{2\pi i(-a_{\alpha})t}e^{i\pi t^{2}\sin\alpha\cos\alpha}%
f(t\sin\alpha)](x).
\end{align*}
Recall that $\mathcal{F(H}f)(x)=-i\mathrm{sgn}x\mathcal{F(}f)(x)$. Hence,%
\begin{align*}
&  -i\mathrm{sgn}x\cdot\tau_{-a_{\alpha}}\left(  \mathcal{F}_{\alpha}f\right)
(x)\\
&  =A_{\alpha}\sin\alpha\cdot e^{i\pi(x+a_{\alpha})^{2}\cot\alpha}\left(
-i\mathrm{sgn}x\mathcal{F}[e^{2\pi i(-a_{\alpha})t}e^{i\pi t^{2}\sin\alpha
\cos\alpha}f(t\sin\alpha)](x)\right) \\
&  =A_{\alpha}\sin\alpha\cdot e^{i\pi(x+a_{\alpha})^{2}\cot\alpha}%
\mathcal{F}\left[  \mathcal{H}\left(  e^{2\pi i(-a_{\alpha})(\cdot)}%
e^{i\pi(\cdot)^{2}\sin\alpha\cos\alpha}f((\cdot)\sin\alpha)\right)
(t)\right]  (x).
\end{align*}
Thus%
\begin{align*}
&  \tau_{a_{\alpha}}\left(  -i\mathrm{sgn}x\cdot\tau_{-a_{\alpha}}\left(
\mathcal{F}_{\alpha}f\right)  (x)\right) \\
&  =A_{\alpha}\sin\alpha\cdot e^{i\pi x^{2}\cot\alpha}\tau_{a_{\alpha}%
}\mathcal{F}\left[  \mathcal{H}\left(  e^{2\pi i(-a_{\alpha})(\cdot)}%
e^{i\pi(\cdot)^{2}\sin\alpha\cos\alpha}f((\cdot)\sin\alpha)\right)
(t)\right]  (x)\\
&  =A_{\alpha}\sin\alpha\cdot e^{i\pi x^{2}\cot\alpha}\mathcal{F}\left[
e^{2\pi ia_{\alpha}t}\mathcal{H}\left(  e^{2\pi i(-a_{\alpha})(\cdot)}%
e^{i\pi(\cdot)^{2}\sin\alpha\cos\alpha}f((\cdot)\sin\alpha)\right)
(t)\right]  (x)\\
&  =A_{\alpha}e^{i\pi x^{2}\cot\alpha}\mathcal{F}\left[  e^{2\pi ia_{\alpha
}(\cdot)\csc\alpha}\mathcal{H}\left(  e^{-2\pi ia_{\alpha}(\cdot)\csc\alpha
}e^{i\pi(\cdot)^{2}\cot\alpha}f\right)  \right]  (x\csc\alpha)\\
&  =\mathcal{F}_{\alpha}\left[  e^{-i\pi(\cdot)^{2}\cot\alpha}e^{2\pi
ia(\cdot)}\mathcal{H}\left(  e^{-2\pi ia(\cdot)}e^{i\pi(\cdot)^{2}\cot\alpha
}f\right)  \right]  (x).
\end{align*}
The definition of the fractional Hilbert transform (\ref{eq:HT}) implies that%
\[
\mathcal{H}_{\alpha}f=e^{-i\pi(\cdot)^{2}\cot\alpha}\mathcal{H}\left(
e^{i\pi(\cdot)^{2}\cot\alpha}f\right)  .
\]
Therefore,%
\[
\tau_{a_{\alpha}}\left(  -i\mathrm{sgn}x\cdot\tau_{-a_{\alpha}}\left(
\mathcal{F}_{\alpha}f\right)  (x)\right)  =\mathcal{F}_{\alpha}\left[  e^{2\pi
ia(\cdot)}\mathcal{H}_{\alpha}\left(  e^{-2\pi ia(\cdot)}f\right)  \right]
(x),
\]
and similarly,%
\[
\tau_{b_{\alpha}}\left(  -i\mathrm{sgn}x\cdot\tau_{-b_{\alpha}}\left(
\mathcal{F}_{\alpha}f\right)  (x)\right)  =\mathcal{F}_{\alpha}\left[  e^{2\pi
ib(\cdot)}\mathcal{H}_{\alpha}\left(  e^{-2\pi ib(\cdot)}f\right)  \right]
(x).
\]
Consequently,%
\begin{align*}
   \mathcal{F}_{\alpha}(S_{\rho_{\alpha}}f)(x)
&  =\frac{i}{2}\left[  \mathcal{F}_{\alpha}\left[  e^{2\pi ia(\cdot
)}\mathcal{H}_{\alpha}\left(  e^{-2\pi ia(\cdot)}f\right)  \right]
(x)-\mathcal{F}_{\alpha}\left[  e^{2\pi ib(\cdot)}\mathcal{H}_{\alpha}\left(
e^{-2\pi ib(\cdot)}f\right)  \right]  (x)\right] \\
&  =\frac{i}{2}\mathcal{F}_{\alpha}\left[  e^{2\pi ia(\cdot)}\mathcal{H}%
_{\alpha}\left(  e^{-2\pi ia(\cdot)}f\right)  -e^{2\pi ib(\cdot)}%
\mathcal{H}_{\alpha}\left(  e^{-2\pi ib(\cdot)}f\right)  \right]  (x).
\end{align*}
Namely,%
\begin{equation}
S_{\rho_{\alpha}}(f)(x)=\frac{i}{2}\left[  e^{2\pi iax}\mathcal{H}_{\alpha
}\left(  e^{-2\pi ia(\cdot)}f\right)  (x)-e^{2\pi ibx}\mathcal{H}_{\alpha
}\left(  e^{-2\pi ib(\cdot)}f\right)  (x)\right]  . \label{eq:L-P1}%
\end{equation}
Since $\mathcal{H}_{\alpha}$ is bounded from $L^{p}(\mathbb{R})$ to
$L^{p}(\mathbb{R})$, $S_{\rho_{\alpha}}$ can be extended to be a bounded
operator on $L^{p}(\mathbb{R})$.

Finally, the classical partial summation operator $S_{\rho}$ corresponding to
$\rho=[a,b]$ is defined by%
\[
\mathcal{F}(S_{\rho}f)(x)=\chi_{\rho}(x)\left(  \mathcal{F}f\right)  (x),
\]
and $S_{\rho}$ can be expressed as (refer to \cite[Example 5.4.6]{Ding})
\begin{equation}
S_{\rho}(f)(x)=\frac{i}{2}\left[  e^{2\pi iax}\mathcal{H}\left(  e^{-2\pi
ia(\cdot)}f\right)  (x)-e^{2\pi ibx}\mathcal{H}\left(  e^{-2\pi ib(\cdot
)}f\right)  (x)\right]  . \label{eq:L-P2}%
\end{equation}
Comparing (\ref{eq:L-P1}) and (\ref{eq:L-P2}) and applying the classical
Littlewood-Paley theorem to (\ref{eq:L-P1}), we   easily conclude that
Theorem \ref{th:L-P} holds.
\end{proof}

\section{Applications to chirps}

\label{sect:ex}

FRFT seems to be a more effective tool than the classical Fourier transform in
frequency spectrum analysis of chirp signals. For example, let%
\[
u(t)=\left\{
\begin{array}
[c]{cc}%
\frac{e^{-\pi it^{2}}}{\sqrt{\left\vert t\right\vert }}, & 0<\left\vert
t\right\vert <1,\\
\frac{e^{-\pi it^{2}}}{t^{2}}, & \left\vert t\right\vert \geq1.
\end{array}
\right.
\]
Then $u$ is a chirp signal and $u\in L^{1}(\mathbb{R})$ but $u\notin
L^{2}(\mathbb{R})$. The real and imaginary part graphs of $u(t)$ in time
domain are shown in Fig. \ref{fig:fi}.

\begin{figure}[htb]
\centering
\subfigure[real part graph of $u(t)$]{
\includegraphics[width=0.35\linewidth]{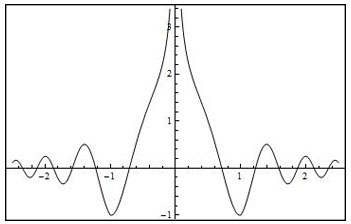}} \hspace{0.5em}
\subfigure[imaginary part graph of $u(t)$]{
\includegraphics[width=0.35\linewidth]{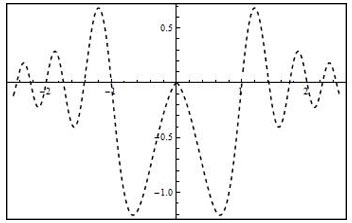}} \caption{real and imaginary
part graphs of $u(t)$ in time domain.}%
\label{fig:fi}%
\end{figure}

Consider the FRFT of $u$ of order $\pi/4$:
\[
(\mathcal{F}_{\pi/4}u)(w)=2e^{i\pi w^{2}}\left(  \frac{\text{C}\left(
2^{5/4}\sqrt{\left\vert w\right\vert }\right)  }{2^{1/4}\sqrt{\left\vert
w\right\vert }}-\sqrt{2}\pi^{2}\left\vert w\right\vert +2\sqrt{2}\pi
w\operatorname{Si}\left(  2\sqrt{2}\pi w\right)  +\cos\left(  2\sqrt{2}\pi
w\right)  \right)  ,
\]
where $\operatorname{C}(x)$ and $\operatorname{Si}(x)$ denote the Fresnel
integral and sine integral, respectively. Namely,%
\begin{eqnarray*}
\text{C}(x)&=&\int_{0}^{x}\sin t^{2}\text{d}t=\sum\limits_{n=0}^{\infty}%
(-1)^{n}\frac{x^{4n+1}}{(2n)!(4n+1)}, \\
\operatorname{Si}(x)&=&\int_{0}^{x}\frac{\sin t}{t}\text{d}t=\sum\limits_{n=0}%
^{\infty}(-1)^{n}\frac{x^{2n+1}}{(2n+1)!(2n+1)}.
\end{eqnarray*}
The real and imaginary part graphs of $(\mathcal{F}_{\pi/4}u)(w)$ in frequency
domain are shown in Fig \ref{fig:gi}.

\begin{figure}[htb]
\centering
\subfigure[real part graph of $(\mathcal{F}_{\pi/4}u)(w)$]{
\includegraphics[width=0.35\linewidth]{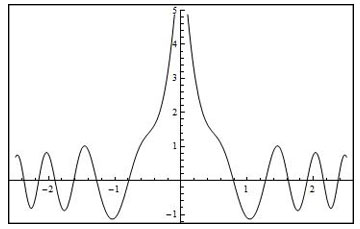}} \hspace{0.5em}
\subfigure[imaginary part graph of $(\mathcal{F}_{\pi/4}u)(w)$]{
\includegraphics[width=0.35\linewidth]{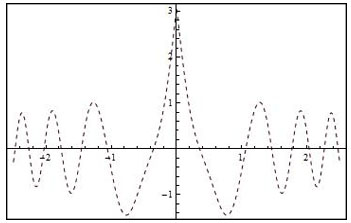}} \caption{real and imaginary
part graphs of $(\mathcal{F}_{\pi/4}u)(w)$ in frequency domain.}%
\label{fig:gi}%
\end{figure}

It is obvious that $\mathcal{F}_{\pi/4}u\notin L^{1}(\mathbb{R})$. The inverse
FRFT%
\begin{equation}
\int_{-\infty}^{+\infty}(\mathcal{F}_{\pi/4}u)(w)K_{-\pi/4}(x,t)\mathrm{d}w
\label{ex}%
\end{equation}
do not make sense. In order to recover the original signal $u(t)$, we should
use the approximating method. Fig. \ref{fig:ue} shows the Abel means of the
integral (\ref{ex}) with $\varepsilon=1,0.1,0.01$, that is,%
\[
u_{\varepsilon}(t)=\int_{-\infty}^{+\infty}(\mathcal{F}_{\pi/4}u)(w)K_{-\pi
/4}(x,t)e^{-2\pi\varepsilon\left\vert \csc\alpha\right\vert \left\vert
w\right\vert }\mathrm{d}w.
\]
By Theorems \ref{cor:L1}-\ref{cor:point} and Corollary \ref{cor:p-w} we know that $u_{\varepsilon
}(t)\rightarrow u(t)$ for a.e. $t\in\mathbb{R}$ as $\varepsilon\rightarrow0$.

\begin{figure}[htb]
\centering
\subfigure[real part graph of $u_\varepsilon(t)$]{
\includegraphics[width=0.35\linewidth]{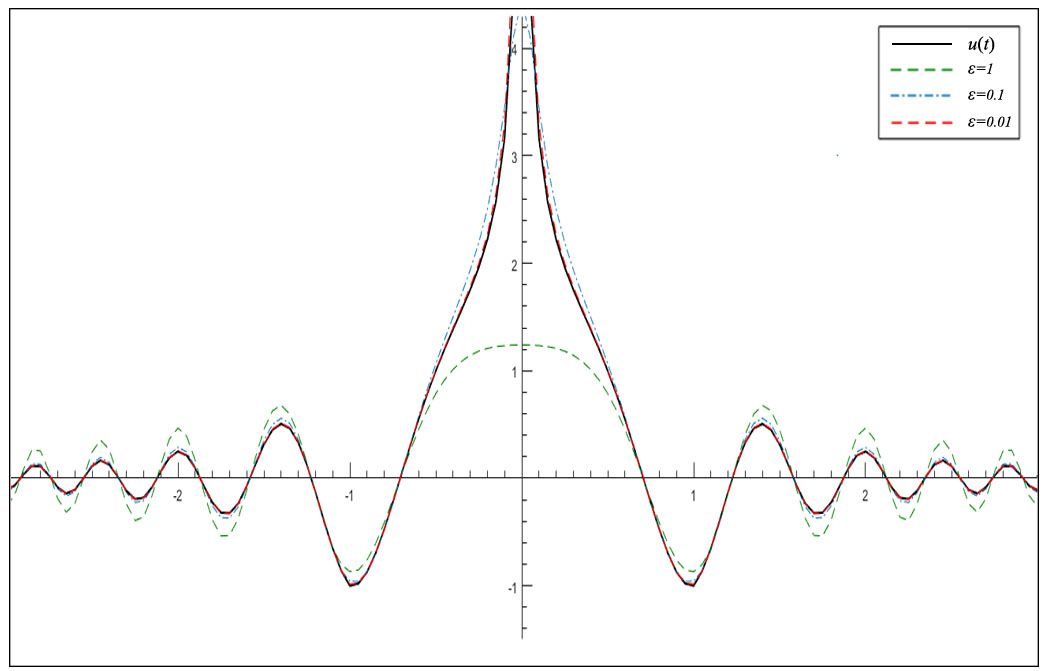}} \hspace{0.5em}
\subfigure[imaginary part graph of $u_\varepsilon(t)$]{
\includegraphics[width=0.35\linewidth]{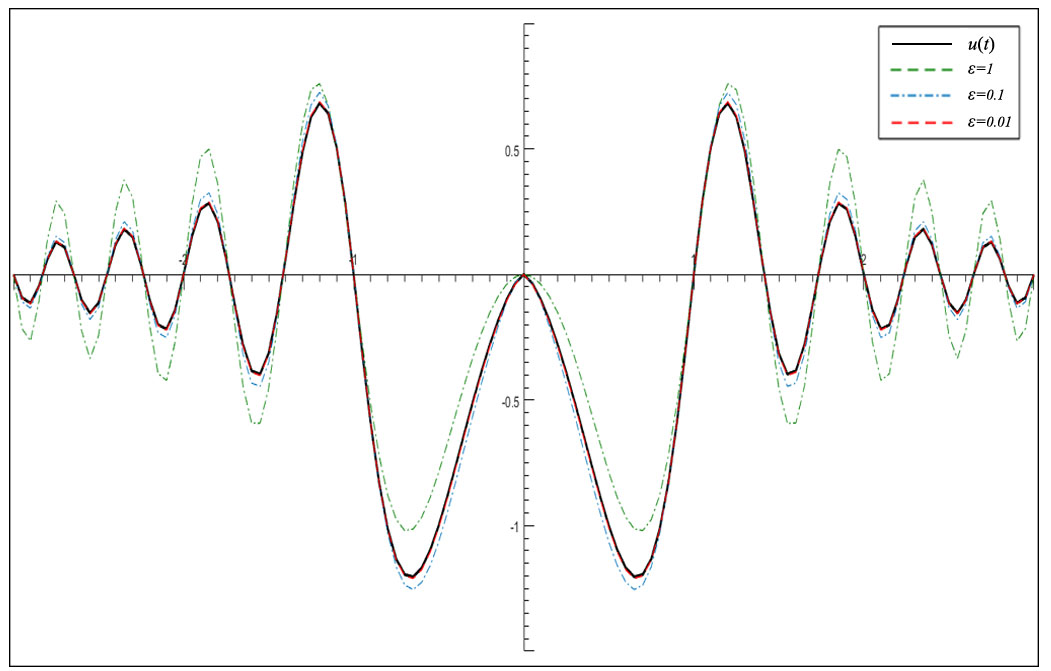}} \caption{real and imaginary
part graphs of $u_{\varepsilon}(t)$}%
\label{fig:ue}%
\end{figure}


\end{document}